\documentclass[10pt]{article}%
\usepackage{amsmath}
\usepackage{amsfonts}
\usepackage{amssymb}
\usepackage{graphicx}%
\usepackage{mathrsfs}
\usepackage[square, comma, sort&compress, numbers]{natbib}
\usepackage{amssymb, color}
\usepackage[body={15.5cm,21cm}, top=3cm]{geometry}%
\setcounter{MaxMatrixCols}{30}
\DeclareMathOperator*{\esssup}{ess\,sup}
\providecommand{\U}[1]{\protect \rule{.1in}{.1in}}

\newtheorem{theorem}{Theorem}[section]

\newtheorem{definition}[theorem]{{Definition}}
\newtheorem{example}[theorem]{{ Example}}

\newtheorem{lemma}[theorem]{Lemma}

\newtheorem{remark}[theorem]{{Remark}}

\newenvironment{proof}[1][Proof]{\noindent \textbf{#1.} }{\  \rule{0.5em}{0.5em}}
\begin{document}

\title{Maximum principle for  stochastic recursive optimal control problem under model uncertainty}
\author{Mingshang Hu \thanks{Zhongtai Securities Institute for Financial Studies, Shandong University. humingshang@sdu.edu.cn. Hu and Wang's research was
supported by the National Key R\&D Program of China (No. 2018YFA0703900). Hu's research was supported by the National Natural Science Foundation of China (No. 11671231) and the Young Scholars Program of Shandong University (No. 2016WLJH10).}
\and Falei Wang\thanks{Zhongtai Securities Institute for Financial  Studies, Shandong University.
flwang2011@gmail.com (Corresponding author). Wang's  research was supported by   the National Natural Science Foundation of China (No. 11601282), the Natural Science Foundation of Shandong Province (No. ZR2016AQ10)  and the Young Scholars Program of Shandong University.}}
\date{}
\maketitle
\begin{abstract}
In this paper, we consider a stochastic recursive optimal control problem under model uncertainty. In this framework, the cost function is described by solutions of a family of backward stochastic differential equations
with uncertainty parameter $\theta$, which is used to represent  different market conditions. With the help of linearization techniques and weak convergence methods, we derive the corresponding stochastic maximum principle. Moreover, a linear quadratic robust control problem is also studied.
\end{abstract}

\textbf{Key words}: backward stochastic differential equations, maximum principle, model uncertainty, robust control

\textbf{MSC-classification}: 93E20, 60H10, 35K15

\section{Introduction}
The nonlinear backward stochastic differential equations (BSDEs)  formulated by Pardoux and Peng \cite{PP1}, provided a powerful tool for the research of stochastic control problem and partial differential equations. In particular, El Karoui, Peng, and Quenez \cite{EP} applied BSDEs to characterize the so-called stochastic recursive optimal control problem. In this framework,      the asset price is  described by $x$ term and the cost function is defined by $y(0)$ term of the following  forward and backward stochastic differential equation (FBSDE) on a  finite time horizon $[0,T]$:
\begin{align} \label{myq9001}
\begin{cases}
&x(t)=x_0+\int^t_0b(s,x(s),u(s))ds+\int^t_0\sigma(s,x(s),u(s))dW(s),\\
&y(t)=\varphi(x(T))+\int^T_tf(s,x(s),y(s),z(s),u(s))ds-\int_{t}^{T}z(s)dW(s),
\end{cases}
\end{align}
where $W=(W(t))_{0\leq t\leq T}$ is a  standard $d$-dimensional Brownian motion on a complete probability space $(\Omega,\mathscr{F},\mathbb{P})$ and
$u$ denotes an admissible control process taking value in some nonempty set $U$ (see Section 2 for more details).

The stochastic recursive optimal control problems have
 important applications in mathematical finance and engineering. For instance, Chen and
Epstein \cite{CE} considered the stochastic differential recursive utility with drift
ambiguity, which can be
characterized  by a special kind of BSDE (see also Duffie and Epstein \cite{DE}).
Moreover, the equation \eqref{myq9001} reduces to the classical  stochastic optimal control problem when the generator $f$ is independent of the arguments $y$ and $z$.

In practice, taking into account the  model uncertainty, it is hard to know the actual  drift and diffusion  coefficients of $x$.
For example,  the share market is often described as being in either a bull market or a bear market. However,
the  coefficients may be different in a bull market and a bear market. Since bull markets or bear markets are difficult to predict, we do not know if the actual cost is $y_1(0)$ or $y_2(0)$, where $y_{1}(0)$ and $y_2(0)$ represent the costs in a bull market and a bear market, respectively.
Suppose that the probability $\lambda$ of a bull market occurring is unknown.
In this case, we could measure  the cost in the following robust way
\begin{align}\label{myq90001}
J(u)=\sup\limits_{\lambda\in[0,1]}\left(\lambda y_1(0)+(1-\lambda)y_2(0)\right)=\max(y_1(0),y_2(0)),
\end{align}
which can be regarded  as a  robust optimal control problem.

In the sequel, we  use the parameter $\theta\in\Theta$ to  represent   different market conditions, where $\Theta$ is a locally compact Polish space. The corresponding cost $y_{\theta}(0)$ is given by
\begin{align} \label{myq90011}
\begin{cases}
&x_{\theta}(t)=x_0+\int^t_0b_{\theta}(s,x_{\theta}(s),u(s))ds+\int^t_0\sigma_{\theta}(s,x_{\theta}(s),u(s))dW(s),\\
&y_{\theta}(t)=\varphi_{\theta}(x_{\theta}(T))+\int^T_tf_{\theta}(s,x_{\theta}(s),y_{\theta}(s),z_{\theta}(s),u(s))ds-\int_{t}^{T}z_{\theta}(s)dW(s),
\end{cases}
\end{align}
where the coefficients  of  the controlled FBSDEs depends on the market uncertainty parameter $\theta$.
Suppose that $\mathcal{Q}$ is the  set of  all possible probability distributions of  $\theta$.
Then, the  robust cost function is defined by
\begin{align}\label{myq90002}
J(u)=\sup\limits_{Q\in\mathcal{Q}}\int_{\Theta} y_{\theta}(0)Q(d\theta).
\end{align}
It is obvious that equation \eqref{myq90001} is a special case of equation \eqref{myq90002}.
Thus, an interesting question is to study the above stochastic recursive  optimal  robust control problem.

An important approach for optimal control problems is to derive  maximum principle, namely, necessary condition for optimality.
In the seminal paper \cite{P1}, Peng established a global maximum principle for the classical
stochastic optimal control problem.
Since then, the stochastic maximum principle was extensively investigated
for various stochastic systems, such as mean field dynamics, infinite-dimensional case and so on. Indeed, Buckdahn,  Li and  Ma  \cite{BL1} studied the optimal control problem for
mean-field SDEs; Fuhrman, Hu and  Tessitore \cite{FHT} considered maximum
principle for infinite-dimensional stochastic control systems;
Tang \cite{Ts} obtained a general partially observed maximum principle with correlated noises between the system and the observation.
For more research on this topic, the reader is referred to \cite{DM,FHT1,HP1,HP,HT,LZ,QT, TL,Yu}
and the references therein.

Furthermore, much research is also devoted to  studying maximum principle for the stochastic recursive optimal control problems.
Peng \cite{P2} first studied the  convex control domain case and established a  local maximum principle.
Then, Ji and Zhou \cite{JZ} obtained a local maximum principle for the convex case with terminal state constraints.
 Xu \cite{XW} considered the nonconvex case when the diffusion coefficient does not include control variable.
We refer the reader to \cite{H,HJ1,HJ,P3,Y} for a closest related research.

The present paper is devoted to the research of stochastic maximum principle for the above stochastic recursive optimal robust control problem.
In order to illustrate the main idea, we will study the convex control domain case.
Note that the robust cost is a supremum  over a family of  probability measures. Thus, the classical convex variational approach cannot be directly applied to this question. To overcome this difficult, we deal with the derivative of the value function through  weak convergence methods.

In order to carry out the purpose, we assume that $\mathcal{Q}$ is weakly compact and convex. With the help of the linearization techniques,
we obtain the variation equation of the FBSDE for each  uncertainty parameter $\theta$. Unlike the classical case, we need to establish the convergence for the variational equation uniformly with respect to $\theta$.
Then, in the spirit of  Sion's minimax theorem,  we prove that the variational
inequality is the integral of  the variational BSDE  with respect to a reference probability $\overline{Q}\in\mathcal{Q}$.
We also study the  regularity of the Hamiltonian function  to deal with some measurability issues with respect to  the parameter $\theta$.
 Based on the above results, the stochastic maximum principle is derived. Moreover,  the stochastic maximum principle is also a sufficient
condition under some convex assumptions.

The contribution of this paper is threefold. First, the stochastic recursive optimal robust control problem
under model uncertainty is formulated. In particular,  the robust cost involves a family of cost functions under different market conditions.
Next, the stochastic maximum principle  is obtained, which involves the integral of  the Hamiltonian function with respect to the above probability $\overline{Q}\in\mathcal{Q}$.  To the best of our knowledge, this is the first study to the above type of maximum principle.
Finally, we apply the maximum principle to solving a linear quadratic robust control problem. Moreover, compared with \cite{P2}, our problem is essentially an ``inf sup problem'', which makes it more delicate and challenging.

The paper is organized as follows. In section 2, we formulate the  stochastic recursive optimal  robust control problem. Then, we state the maximum principle in section 3. The section 4 is devoted to the study of a linear quadratic robust control problem.

\subsubsection*{Notation.}
Throughout this paper, let $(\mathscr{F}_t)_{0\leq t\leq T}$ be the  natural filtration generated by $W$ augmented by the $\mathbb{P}$-null sets of $\mathscr{F}$.
For  each Euclidian space, we  denote by $\langle\cdot,\cdot \rangle$  and  $|\cdot|$
 its scalar product and the associated norm, respectively.
Denote by $\mathbb{R}^n$ the $n$-dimensional real
Euclidean space, $\mathbb{R}^{n\times d}$ the set of $n\times d$ real matrices and $\mathbb{S}_n$ the set of symmetric $n\times n$ real matrices. Moreover, we use the notation $\partial_x=(\frac{\partial}{\partial x^1},\cdots,\frac{\partial}{\partial x^n})$, for $x\in\mathbb{R}^n$. Then, $\partial_x\psi=(\frac{\partial\psi}{\partial x^1},\cdots,\frac{\partial\psi}{\partial x^n})$ is a row vector  for $\psi:\mathbb{R}^n\rightarrow \mathbb{R}$ and  $\partial_x\Psi=[\frac{\partial\Psi^i}{\partial x^j}]$ is a $d\times n$ matrix  for $\Psi=(\Psi^1,\cdots,\Psi^d)^{\top}:\mathbb{R}^n\rightarrow \mathbb{R}^d$.
Finally, we consider the following Banach spaces: for any $p\geq 1$,
\begin{description}
\item[$\bullet$] ${L}^{p}(\mathscr{F}_T;\mathbb{R}^n)$ is the space of  $\mathbb{R}^n$-valued  $\mathscr{F}_T$-measurable random vectors $\xi$
satisfying
$
\mathbb{E}[|\xi|^p]<\infty;
$
\item[$\bullet$]  $\mathcal{M}^p(0,T;\mathbb{R}^n)$  is the space of   $\mathbb{R}^n$-valued  $\mathscr{F}$-progressively measurable   processes $(u(t))_{0\leq t\leq T}$
satisfying
\begin{align*}
\mathbb{E}\left[\int^T_0|u(t)|^pd\right]<\infty;\end{align*}
\item[$\bullet$]  $\mathcal{M}^{\infty}(0,T;\mathbb{R}^n)$  is the space of   $\mathbb{R}^n$-valued  $\mathscr{F}$-progressively measurable   processes $(u(t))_{0\leq t\leq T}$
satisfying
\begin{align*}
\esssup\limits_{(t,\omega)}|u(t)|<\infty;\end{align*}
\item[$\bullet$]  $\mathcal{H}^p(0,T;\mathbb{R}^n)$  is the space of   $\mathbb{R}^n$-valued  $\mathscr{F}$-progressively measurable   processes $(z(t))_{0\leq t\leq T}$
satisfying
\begin{align*}
\mathbb{E}\left[\left(\int^T_0|z(t)|^2dt\right)^{\frac{p}{2}}\right]<\infty;\end{align*}
\item[$\bullet$]  $\mathcal{H}^{1,p}(0,T;\mathbb{R}^n)$  is the space of   $\mathbb{R}^n$-valued  $\mathscr{F}$-progressively measurable  processes $(z(t))_{0\leq t\leq T}$
satisfying
\begin{align*}
\mathbb{E}\left[\left(\int^T_0|z(t)|dt\right)^p\right]<\infty;\end{align*}
\item[$\bullet$]  $\mathcal{S}^{p}(0,T;\mathbb{R}^n)$  is the space of $\mathbb{R}^n$-valued  $\mathscr{F}$-adapted continuous  processes $(y(t))_{0\leq t\leq T}$ satisfying
\begin{align*}
\mathbb{E}\left[\sup_{0\leq t\leq T}|y(t)|^p\right]<\infty;
\end{align*}
\item[$\bullet$]  $C(0,T;\mathbb{R}^n)$  is the space of $\mathbb{R}^n$-valued  continuous functions on $[0,T]$.
 \end{description}
In the sequel, for a given set of parameters $\alpha$, $C(\alpha)$ will denote a positive constant only depending on these parameters,
and which may change from line to line.

\section{Formulation of the problem}

We now introduce the definition of admissible control.  Assume  $U$ is a given nonempty convex subset of
$\mathbb{R}^{k}$ and $p>4$.
\begin{definition}
$u:[0,T]\times\Omega\rightarrow U$ is said to be an admissible control, if $u\in\mathcal{M}^p(0,T;\mathbb{R}^k)$.
The set of admissible controls is denoted by $\mathcal{U}[0,T]$.
\end{definition}

In the market, assume that the agent can
choose an admissible control $u\in\mathcal{U}[0,T]$ to obtain some SDE on $[0,T]$.
However, he does not know the actual drift and diffusion coefficients due to the model uncertainty. Instead,
the agent just knows a family of coefficients which may occur in the market.

In this case,   the corresponding SDE can be described by
\begin{align} \label{App1y}
x_{\theta}(t)=x_0+\int^t_0b_{\theta}(s,x_{\theta}(s),u(s))ds+\int^t_0\sigma_{\theta}(s,x_{\theta}(s),u(s))dW(s),
\end{align}
where $\theta\in\Theta$  and   $\Theta$ is a  locally compact, complete separable space with  distance $\mu$.
The corresponding cost is given by $y_{\theta}(0)$ term of the following BSDE on $[0,T]$:
\begin{align} \label{App1y11}
y_{\theta}(t)=\varphi_{\theta}(x_{\theta}(T))+\int^T_tf_{\theta}(s,x_{\theta}(s),y_{\theta}(s),z_{\theta}(s),u(s))ds-\int_{t}^{T}z_{\theta}(s)dW(s).
\end{align}
In the above equations, $b_{\theta}:[0,T]\times\mathbb{R}^{n}\times U\rightarrow
\mathbb{R}^{n}$, $\sigma_{\theta}=[\sigma_{\theta}^1,\cdots,\sigma_{\theta}^d]:[0,T]\times\mathbb{R}^{n}\times U\rightarrow
\mathbb{R}^{n\times d}$, $\varphi_{\theta}:\mathbb{R}^{n}\rightarrow
\mathbb{R}$, $f_{\theta}:[0,T]\times\mathbb{R}^{n}\times\mathbb{R}\times\mathbb{R}^{d}\times U\rightarrow
\mathbb{R}$ are Borel measurable functions.
Note that the process $(x_{\theta},y_{\theta},z_{\theta})$ depends on $u$ and we omit the superscript $u$ for convenience,
unless otherwise specified.

Due to the model uncertainty, the cost function is defined by:
\[
J(u)=\sup\limits_{Q\in\mathcal{Q}}\int_{\Theta} y_{\theta}(0)Q(d\theta),
\]
where  $\mathcal{Q}$ is a set of probability measures on $(\Theta,\mathcal{B}(\Theta))$.
Note that at this stage, we cannot even conclude that the function $ \theta\rightarrow y_{\theta}(0)$ is measurable.

In this paper, we make use of the following assumptions.
\begin{description}
\item[(H1)] There exists some positive constant $L$ such that for any $t\in[0,T], x,x^{\prime}\in\mathbb{R}^{n}, y, y^{\prime}\in\mathbb{R},z,z^{\prime}\in\mathbb{R}^{d}, u,u^{\prime}\in U,\theta\in\Theta$,
\begin{align*}
&|b_{\theta}(t,x,u)-b_{\theta}(t,x^{\prime},u^{\prime})|+|\sigma_{\theta}(t,x,u)-\sigma_{\theta}(t,x^{\prime},u^{\prime})|\leq
L(|x-x^{\prime}|+|u-u^{\prime}|),\\
&  |\varphi_{\theta}(x)-\varphi_{\theta}(x^{\prime})|+|f_{\theta}(t,x,y,z,u)-f_{\theta}(t,x^{\prime},y^{\prime},z^{\prime},u^{\prime})|\\
&  \leq L\left((1+|x|+|x^{\prime}|+|u|+|u^{\prime}|)(|x-x^{\prime}|+|u-u^{\prime}|)+|y-y^{\prime
}|+|z-z^{\prime}|\right),\\
&|b_{\theta}(t,0,0)|+|\sigma_{\theta}(t,0,0)|+|f_{\theta}(t,0,0,0,0)|\leq L.
\end{align*}
\item[(H2)] $b_{\theta},\sigma_{\theta},\varphi_{\theta},f_{\theta}$ are  continuously differentiable in $(x,y,z,u)$ for any $\theta\in\Theta$.
\item[(H3)] There exists a modulus of continuity $\overline{\omega}:[0,\infty)\rightarrow[0,\infty)$  such that
   \begin{align*}
   |\ell_{\theta}(t,x,y,z,u)-\ell_{\theta}(t,x^{\prime},y^{\prime},z^{\prime},u^{\prime})|\leq \overline{\omega}(|x-x^{\prime}|+|y-y^{\prime
}|+|z-z^{\prime}|+|u-u^{\prime}|),
    \end{align*}
for any $t\in[0,T], x,x^{\prime}\in\mathbb{R}^n, y, y^{\prime}\in\mathbb{R}, z, z^{\prime}\in\mathbb{R}^d, u,u^{\prime}\in U$, $\theta\in\Theta$,    where $\ell_{\theta}$ is the derivative of $b_{\theta},\sigma_{\theta},\varphi_{\theta}, f_{\theta}$ in $ (x,y,z,u)$.
\item[(H4)]  For each $N>0$, there exists a modulus of continuity $\overline{\omega}_N:[0,\infty)\rightarrow[0,\infty)$  such that
   \begin{align*}
   |\ell_{\theta}(t,x,y,z,u)-\ell_{\theta^{\prime}}(t,x,y,z,u)|\leq \overline{\omega}_N(\mu(\theta,\theta^{\prime})),
    \end{align*}
for any $t\in[0,T], |x|, |y|, |z|, |u|\leq N$, $\theta,\theta^{\prime}\in\Theta$,  where $\ell_{\theta}$ is $b_{\theta},\sigma_{\theta},\varphi_{\theta}, f_{\theta}$ and their derivatives in $(x,y,z,u)$.

\item[(H5)] $\mathcal{Q}$ is a
weakly compact and convex set of probability measures on $(\Theta,\mathcal{B}(\Theta))$.
\end{description}

\begin{example}{\upshape
		Let $\Theta$ be a countable discrete space. Then, $\mu(\theta,\theta^{\prime})=\mathbf{1}_{\theta\neq\theta^{\prime}}$. Thus, under assumptions
		{(H1)}-{(H3)}, it is easy to check that condition (H4) holds.
	}
\end{example}

\begin{lemma}\label{myw301} Assume that \emph{(H1)} holds. Then, the FBSDE \eqref{App1y} and \eqref{App1y11} admits a unique solution $(x_{\theta},y_{\theta},z_{\theta})\in
\mathcal{S}^p(0,T;\mathbb{R}^n)\times \mathcal{S}^{\frac{p}{2}}(0,T;\mathbb{R})\times  \mathcal{H}^{\frac{p}{2}}(0,T;\mathbb{R}^{d})$. Moreover, for any $q\in(2,p]$,
\[
\mathbb{E}\left[\sup_{0\leq t\leq T}|x_{\theta}(t)|^q+\sup_{0\leq t\leq T}|y_{\theta}(t)|^{\frac{q}{2}}+\left(\int^T_0|z_{\theta}(t)|^2dt\right)^{\frac{q}{4}}\right]\leq  C(L,T,q) \mathbb{E}\left[|x_0|^q+\int^T_0|u(t)|^qdt \right].
\]
\end{lemma}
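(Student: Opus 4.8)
The plan is to decouple the system: first solve the forward SDE \eqref{App1y} for $x_{\theta}$, and then, treating $x_{\theta}$ as a given input, solve the backward equation \eqref{App1y11} for $(y_{\theta},z_{\theta})$. The one feature to keep track of throughout is that (H1) imposes only a \emph{locally} Lipschitz condition on $\varphi_{\theta}$ and on $f_{\theta}$ in the $x$-variable, with the Lipschitz modulus itself growing linearly in $(x,x^{\prime},u,u^{\prime})$; consequently $\varphi_{\theta}$ and $f_{\theta}$ may grow quadratically in $x$. This quadratic growth is exactly what forces the halved integrability exponent $p/2$ for $y_{\theta}$ and the quartered exponent $p/4$ (on $\int_0^T|z_{\theta}|^2dt$) appearing in the statement, rather than the full exponent $p$ available for $x_{\theta}$.

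First I would treat the forward SDE. Since $b_{\theta}$ and $\sigma_{\theta}$ are globally Lipschitz in $(x,u)$ and, via $|b_{\theta}(t,0,0)|+|\sigma_{\theta}(t,0,0)|\le L$, of linear growth, the classical contraction-mapping argument in $\mathcal{S}^p(0,T;\mathbb{R}^n)$ yields a unique solution $x_{\theta}\in\mathcal{S}^p$. Applying It\^o's formula to $|x_{\theta}(t)|^q$, the Burkholder--Davis--Gundy inequality to the martingale part, and Gronwall's lemma, I obtain for every $q\in(2,p]$
\begin{align*}
\mathbb{E}\Big[\sup_{0\le t\le T}|x_{\theta}(t)|^q\Big]\le C(L,T,q)\,\mathbb{E}\Big[1+|x_0|^q+\int_0^T|u(t)|^q\,dt\Big].
\end{align*}
Crucially, this constant depends only on $L,T,q$ and not on $\theta$, because the Lipschitz and growth bounds in (H1) are uniform in $\theta$.

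Next, with $x_{\theta}$ fixed, I would record the integrability of the data of the backward equation. From (H1) the map $\varphi_{\theta}$ has at most quadratic growth, so $|\varphi_{\theta}(x_{\theta}(T))|^{q/2}\le C\big(1+|x_{\theta}(T)|^q\big)$ and the terminal value lies in $L^{q/2}(\mathscr{F}_T;\mathbb{R})$; likewise the frozen generator $f_{\theta}(s,x_{\theta}(s),0,0,u(s))$ grows quadratically in $(x,u)$, so $\mathbb{E}\big[(\int_0^T|f_{\theta}(s,x_{\theta}(s),0,0,u(s))|\,ds)^{q/2}\big]<\infty$ by the forward estimate and $u\in\mathcal{M}^p$. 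Since $f_{\theta}$ is Lipschitz in $(y,z)$ with constant $L$, the $L^r$-theory for BSDEs with $r=q/2>1$ gives a unique solution $(y_{\theta},z_{\theta})\in\mathcal{S}^{p/2}(0,T;\mathbb{R})\times\mathcal{H}^{p/2}(0,T;\mathbb{R}^d)$ (the endpoint $q=p$), together with the a priori bound
\begin{align*}
&\mathbb{E}\Big[\sup_{0\le t\le T}|y_{\theta}(t)|^{q/2}+\Big(\int_0^T|z_{\theta}(t)|^2\,dt\Big)^{q/4}\Big]\\
&\qquad\le C(L,T,q)\,\mathbb{E}\Big[|\varphi_{\theta}(x_{\theta}(T))|^{q/2}+\Big(\int_0^T|f_{\theta}(s,x_{\theta}(s),0,0,u(s))|\,ds\Big)^{q/2}\Big].
\end{align*}
Substituting the quadratic-growth bounds and the forward estimate into the right-hand side yields the inequality in the statement, again with a constant independent of $\theta$.

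I expect the only genuinely non-routine point to be the bookkeeping of integrability exponents. Because the quadratic growth of $\varphi_{\theta}$ and $f_{\theta}$ in $x$ halves the available moments, one cannot rely on the elementary $L^2$-theory of Pardoux--Peng but must instead invoke the $L^r$-estimates for BSDEs in the range $r>1$; checking their hypotheses amounts to verifying finiteness of the terminal value and of the generator term in the correct spaces, which is precisely where the forward moment bound is used. Once this is in place, uniformity in $\theta$ is automatic, since every constant produced along the way traces back to $L,T,q$ alone and never to $\theta$.
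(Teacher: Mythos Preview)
Your proposal is correct and follows essentially the same approach as the paper: the paper's own proof is the single line ``The proof is immediate from Lemma \ref{myq1} and Lemma \ref{myq2} in appendix A,'' i.e., apply the standard $\mathcal{S}^p$-estimate for the forward SDE and then the $L^r$-theory for BSDEs (Briand--Delyon--Hu--Pardoux--Stoica) with $r=q/2$ to the backward equation. Your write-up spells out precisely this, including the key observation that the quadratic growth of $\varphi_{\theta}$ and $f_{\theta}(\cdot,x,0,0,u)$ in $(x,u)$ is what halves the integrability exponent when passing from the forward to the backward estimate.
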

\begin{proof}
The proof is immediate from Lemma \ref{myq1} and Lemma \ref{myq2} in  appendix A.
\end{proof}

\begin{lemma}\label{myw306} Assume that \emph{(H1)} and \emph{(H4)}  hold. Then,  $\theta\rightarrow{y}_{\theta}(0)$ is continuous and bounded.
\end{lemma}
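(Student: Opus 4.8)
The plan is to obtain boundedness directly from the uniform a priori estimate of Lemma \ref{myw301}, and continuity from a stability analysis of the FBSDE in which the dependence on $\theta$ is isolated through the modulus $\overline{\omega}_N$ of (H4). For boundedness, observe that $y_\theta(0)$ is $\mathscr{F}_0$-measurable, hence a deterministic constant. Fixing any $q\in(2,p]$, Lemma \ref{myw301} gives
\[
|y_\theta(0)|^{q/2}=\mathbb{E}\Big[|y_\theta(0)|^{q/2}\Big]\le\mathbb{E}\Big[\sup_{0\le t\le T}|y_\theta(t)|^{q/2}\Big]\le C(L,T,q)\,\mathbb{E}\Big[|x_0|^q+\int_0^T|u(t)|^q\,dt\Big],
\]
and since the right-hand side is independent of $\theta$, the map $\theta\mapsto y_\theta(0)$ is bounded.

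For continuity, fix $\theta\in\Theta$ and let $\theta'\to\theta$; write $\Delta x=x_{\theta'}-x_\theta$, $\Delta y=y_{\theta'}-y_\theta$, $\Delta z=z_{\theta'}-z_\theta$. First I would control the forward part. Subtracting the two state equations and inserting the intermediate values $b_{\theta'}(s,x_\theta(s),u(s))$, $\sigma_{\theta'}(s,x_\theta(s),u(s))$, the Lipschitz assumption (H1) absorbs the increment $b_{\theta'}(s,x_{\theta'}(s),u(s))-b_{\theta'}(s,x_\theta(s),u(s))$ (and its $\sigma$-analogue) into a Gronwall argument, leaving the frozen differences as source terms. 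The SDE estimates underlying Lemma \ref{myw301} then yield
\[
\mathbb{E}\Big[\sup_{0\le t\le T}|\Delta x(t)|^q\Big]\le C\,\mathbb{E}\Big[\int_0^T\big(|\delta b(s)|^q+|\delta\sigma(s)|^q\big)\,ds\Big],
\]
where $\delta b(s)=b_{\theta'}(s,x_\theta(s),u(s))-b_\theta(s,x_\theta(s),u(s))$ and $\delta\sigma$ is defined analogously.

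The main obstacle is to show this source term vanishes as $\mu(\theta,\theta')\to 0$, because (H4) only controls the $\theta$-modulus on the bounded set $\{|x|,|u|\le N\}$, whereas $x_\theta$ and $u$ are unbounded. I would handle this by truncation: given $\varepsilon>0$, the linear growth implied by (H1) bounds $|\delta b(s)|^q\mathbf{1}_{\{|x_\theta(s)|>N\text{ or }|u(s)|>N\}}$ by $C(1+|x_\theta(s)|+|u(s)|)^q$, whose integral can be made smaller than $\varepsilon$ for $N$ large, thanks to the $q$-th moment bounds of Lemma \ref{myw301} (here $q\le p$ is used, so that $\int_0^T|u|^q\,dt$ is integrable). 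On the complementary bounded set, (H4) gives $|\delta b(s)|\le\overline{\omega}_N(\mu(\theta,\theta'))$, so this contribution is at most $T\,\overline{\omega}_N(\mu(\theta,\theta'))^q\to 0$ once $N$ is fixed. Treating $\delta\sigma$ the same way, I conclude $\mathbb{E}[\sup_t|\Delta x(t)|^q]\to 0$.

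Finally I would pass to the backward part via the BSDE stability estimate, freezing the generator at $(y_\theta,z_\theta)$:
\[
\mathbb{E}\Big[\sup_{0\le t\le T}|\Delta y(t)|^{q/2}\Big]\le C\,\mathbb{E}\Big[|\Delta\varphi|^{q/2}+\Big(\int_0^T|\delta f(s)|\,ds\Big)^{q/2}\Big],
\]
with $\Delta\varphi=\varphi_{\theta'}(x_{\theta'}(T))-\varphi_\theta(x_\theta(T))$ and $\delta f(s)=f_{\theta'}(s,x_{\theta'}(s),y_\theta(s),z_\theta(s),u(s))-f_\theta(s,x_\theta(s),y_\theta(s),z_\theta(s),u(s))$. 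Both $\Delta\varphi$ and $\delta f$ split, exactly as before, into a Lipschitz-in-$x$ piece controlled by $\mathbb{E}[\sup_t|\Delta x(t)|^q]\to 0$ (using Hölder with the uniform moment bounds to absorb the growth factor $1+|x_{\theta'}|+|x_\theta|+|u|$) and a frozen $\theta$-difference piece handled by the same truncation and uniform-integrability argument through (H4). Combining these gives $\mathbb{E}[\sup_t|\Delta y(t)|^{q/2}]\to 0$, and since $y_\theta(0)$ is deterministic this forces $|y_{\theta'}(0)-y_\theta(0)|\to 0$, establishing continuity.
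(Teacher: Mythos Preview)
Your proof is correct and follows essentially the same approach as the paper: boundedness from the uniform estimate of Lemma~\ref{myw301}, and continuity via a stability analysis of the FBSDE that combines the Lipschitz property (H1) with a truncation argument driven by (H4) to control the frozen $\theta$-differences. The paper's one-line proof simply delegates the continuity part to Lemma~\ref{myw209} in Appendix~B, whose proof is precisely the stability-plus-truncation argument you outline (there written with integral remainders of derivatives, but this is just a cosmetic repackaging of the Lipschitz estimates you use directly).
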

\begin{proof}
The proof is immediate from Lemma \ref{myw301} and Lemma \ref{myw209} in appendix B.
\end{proof}

Suppose  the conditions {(H1)} and {(H4)}  hold.
It follows from Lemma \ref{myw306} that, $y_{\theta}(0)$ is continuous in $\theta$ and  $J(u)$ is well-defined. Then, our  stochastic optimal control problem is to minimize the robust cost $J(u)$ over  $u\in\mathcal{U}[0,T]$.

\section{Stochastic maximum principle}

In this section, we will establish the stochastic maximum principle by the linearization and weak convergence methods, which is different from the classical variational approach due to the model uncertainty.

\subsection{Variational equation}
Let $\overline{u}\in\mathcal{U}[0,T]$ be an optimal control and $(\overline{x}_{\theta},\overline{y}_{\theta},\overline{z}_{\theta})$ be the corresponding state process of equations \eqref{App1y} and \eqref{App1y11}  for each $\theta\in\Theta$. Note that the set $\mathcal{U}[0,T]$ is convex. Then, for any $u\in\mathcal{U}[0,T]$ and $\rho\in(0,1)$, it is easy to check that the process $u^{\rho}:=\overline{u}+\rho(u-\overline{u})$ is also an admissible control. Denote by $(x_{\theta}^{\rho},y_{\theta}^{\rho},z_{\theta}^{\rho})$ the trajectory corresponding to $u^{\rho}$ for any  $\theta\in\Theta$.

First, we introduce the following variational  SDE on the time interval $[0,T]$: for each  $\theta\in\Theta$,
\begin{align}\label{myw101}
\begin{cases}
&d\widehat{x}_{\theta}(t)=\left(\partial_x b_{\theta}(t)\widehat{x}_{\theta}(t)+\partial_u b_{\theta}(t)(u(t)-\overline{u}(t))\right)dt+
\sum\limits_{i=1}^d\left(\partial_x \sigma^i_{\theta}(t)\widehat{x}_{\theta}(t)+\partial_u \sigma^i_{\theta}(t)(u(t)-\overline{u}(t))\right)dW^i(t),\\
&\widehat{x}_{\theta}(0)=0,
\end{cases}
\end{align}
where $b_{\theta}(t)=b_{\theta}(t,\overline{x}_{\theta}(t),\overline{u}(t))$, $\partial_xb_{\theta}(t)=\partial_xb_{\theta}(t,\overline{x}_{\theta}(t),\overline{u}(t))$ and $\sigma^i_{\theta}(t)$, $\partial_x\sigma^i_{\theta}(t)$, $\partial_u b_{\theta}(t)$, $\partial_u\sigma^i_{\theta}(t)$ are defined in a similar way.
It follows from assumption (H1) that $\partial_x b_{\theta},\partial_u b_{\theta}, \partial_x \sigma^i_{\theta}$ and $\partial_u \sigma^i_{\theta}$
are uniformly bounded. Then, from Lemma \ref{myq1} in appendix A, the SDE \eqref{myw101} admits a unique solution $\widehat{x}_{\theta}\in \mathcal{S}^p(0,T;\mathbb{R}^n)$. Moreover, it holds that
\begin{align}\label{myw102}
\mathbb{E}\left[\sup_{0\leq t\leq T}|\widehat{x}_{\theta}(t)|^q\right]\leq C(L,T,q)\mathbb{E}\left[\int^T_0(|u(t)|^q+|\overline{u}(t)|^q)dt\right], \ \forall q\in[2,p].
\end{align}

\begin{lemma} \label{myw201} Assume that \emph{(H1)}-\emph{(H3)}   hold. Then, for each $\theta\in\Theta$,
\begin{description}
\item[(i)] $\mathbb{E}\left[\sup_{0\leq t\leq T}|\widetilde {x}^{\rho}_{\theta}(t)|^4\right]\leq C(L,T)\mathbb{E}\left[\int^T_0(|u(t)|^4+|\overline{u}(t)|^4)dt\right]$,
\item[(ii)]
$
\lim\limits_{\rho\rightarrow 0}\sup\limits_{\theta\in\Theta}\mathbb{E}\left[\sup_{0\leq t\leq T}|\widetilde{x}^{\rho}_{\theta}(t)|^4\right]=0,
$
where $
\widetilde{x}_{\theta}^{\rho}(t):=\rho^{-1}\left(x_{\theta}^{\rho}(t)- \overline{x}_{\theta}(t)\right)-\widehat{x}_{\theta}(t).
$\end{description}
\end{lemma}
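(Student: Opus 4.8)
The plan is to exhibit the linear SDE solved by $\widetilde{x}_\theta^\rho$ and show that its inhomogeneous (source) terms vanish, uniformly in $\theta$, as $\rho\to0$. Writing $\xi_\theta^\rho:=\rho^{-1}(x_\theta^\rho-\overline{x}_\theta)$, so that $\widetilde{x}_\theta^\rho=\xi_\theta^\rho-\widehat{x}_\theta$, I would first expand the coefficients by a first-order Taylor formula with integral remainder. For $\varphi\in\{b,\sigma^1,\dots,\sigma^d\}$ set
\[
A_\theta^{\rho,\varphi}(s)=\int_0^1\partial_x\varphi_\theta\big(s,\overline{x}_\theta(s)+\lambda\rho\,\xi_\theta^\rho(s),\overline{u}(s)+\lambda\rho(u(s)-\overline{u}(s))\big)\,d\lambda,
\]
and define $B_\theta^{\rho,\varphi}$ in the same way with $\partial_u$ in place of $\partial_x$. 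Then $x_\theta^\rho-\overline{x}_\theta$ solves a linear SDE with these coefficients; dividing by $\rho$ and subtracting the variational equation \eqref{myw101} shows that $\widetilde{x}_\theta^\rho$ solves
\[
d\widetilde{x}_\theta^\rho(t)=\big(\partial_xb_\theta(t)\widetilde{x}_\theta^\rho(t)+I_\theta^\rho(t)\big)dt+\sum_{i=1}^d\big(\partial_x\sigma_\theta^i(t)\widetilde{x}_\theta^\rho(t)+J_\theta^{\rho,i}(t)\big)dW^i(t),\qquad \widetilde{x}_\theta^\rho(0)=0,
\]
with source terms $I_\theta^\rho=(A_\theta^{\rho,b}-\partial_xb_\theta)\widehat{x}_\theta+(B_\theta^{\rho,b}-\partial_ub_\theta)(u-\overline{u})$ and the analogous $J_\theta^{\rho,i}$ built from $\sigma^i$.

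For part (i), I would simply bound $\widetilde{x}_\theta^\rho=\xi_\theta^\rho-\widehat{x}_\theta$ by the triangle inequality. From the stability estimate for SDEs (Lemma \ref{myq2}) and the Lipschitz bound in (H1), $\mathbb{E}[\sup_t|x_\theta^\rho-\overline{x}_\theta|^4]\le C(L,T)\rho^4\,\mathbb{E}[\int_0^T|u-\overline{u}|^4ds]$, hence $\mathbb{E}[\sup_t|\xi_\theta^\rho|^4]\le C(L,T)\mathbb{E}[\int_0^T|u-\overline{u}|^4ds]$; the bound on $\mathbb{E}[\sup_t|\widehat{x}_\theta|^4]$ is exactly \eqref{myw102}. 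Combining these and using $|u-\overline{u}|^4\le C(|u|^4+|\overline{u}|^4)$ gives (i), and I note that every constant here is free of $\theta$.

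Part (ii) is the crux. Applying the standard $L^4$ a priori estimate (Lemma \ref{myq1}) to the linear SDE for $\widetilde{x}_\theta^\rho$ (whose homogeneous coefficients $\partial_xb_\theta,\partial_x\sigma_\theta^i$ are bounded by $L$) reduces the claim to showing that $\sup_\theta\mathbb{E}\big[(\int_0^T|I_\theta^\rho|ds)^4+(\int_0^T|J_\theta^\rho|^2ds)^2\big]\to0$. Two structural facts drive this. First, by the uniform modulus of continuity in (H3), and monotonicity of $\overline{\omega}$, one has $|A_\theta^{\rho,b}(s)-\partial_xb_\theta(s)|\le\overline{\omega}\big(|x_\theta^\rho(s)-\overline{x}_\theta(s)|+\rho|u(s)-\overline{u}(s)|\big)$ with the \emph{same} $\overline{\omega}$ for every $\theta$, and likewise for the $\sigma^i$ and the $\partial_u$ coefficients. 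Second, all these error coefficients are bounded by $2L$ thanks to (H1). Fixing $\epsilon>0$, I would split the $(s,\omega)$-domain into the good set $G=\{\overline{\omega}(\cdots)\le\epsilon\}$ and its complement. On $G$ each error coefficient is $\le\epsilon$, producing a contribution $\le\epsilon^4 C\,\mathbb{E}[\int_0^T(|u|^4+|\overline{u}|^4)ds]$, uniform in $\theta$. On the complement, monotonicity of $\overline{\omega}$ forces $|x_\theta^\rho-\overline{x}_\theta|+\rho|u-\overline{u}|>\delta(\epsilon)$, so Chebyshev's inequality combined with the $\theta$-uniform stability estimate $\sup_\theta\mathbb{E}[\int_0^T|x_\theta^\rho-\overline{x}_\theta|^4ds]\le CT\rho^4\mathbb{E}[\int_0^T|u-\overline{u}|^4ds]\to0$ makes the measure of the bad set tend to $0$ uniformly in $\theta$; then Hölder's inequality with exponents $p/4$ and $p/(p-4)$, together with the $\theta$-uniform higher-integrability bounds $\mathbb{E}[\int_0^T|\widehat{x}_\theta|^p ds]\le C$ (from \eqref{myw102} with $q=p$) and $u,\overline{u}\in\mathcal{M}^p$, shows the bad-set contribution vanishes as $\rho\to0$ for each fixed $\epsilon$. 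Letting $\rho\to0$ and then $\epsilon\to0$ yields (ii).

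The main obstacle is precisely the uniformity in $\theta$: since $\overline{\omega}$ is nonlinear, a naive dominated-convergence argument would only give convergence for each fixed $\theta$. The resolution is the $\epsilon$-splitting above, in which the small-error region is controlled by the $\theta$-independent modulus $\overline{\omega}$ and the bad region is controlled by $\theta$-uniform stability and the higher integrability afforded by the hypothesis $p>4$ (this is where $p>4$, rather than merely $p\ge4$, is genuinely used, via the strict Hölder conjugate $p/(p-4)<\infty$).
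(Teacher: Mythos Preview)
Your approach is essentially the paper's: linearise, isolate source terms of the form $(A_\theta^{\rho,b}-\partial_xb_\theta)\widehat{x}_\theta+(B_\theta^{\rho,b}-\partial_ub_\theta)(u-\overline u)$, and kill them uniformly in $\theta$ by a truncation argument exploiting the $\theta$-independent modulus $\overline\omega$ from (H3) together with H\"older's inequality using $p>4$. Your $\epsilon$-splitting on $\{\overline\omega(\cdots)\le\epsilon\}$ is the same device as the paper's $N$-splitting on $\{|\widetilde x_\theta^\rho+\widehat x_\theta|\le N,\ |u-\overline u|\le N\}$; and your triangle-inequality route to (i) via $\xi_\theta^\rho$ and $\widehat x_\theta$ separately is a legitimate variant of the paper's direct SDE bound.

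One algebraic slip to fix: with the source $I_\theta^\rho$ you wrote, the homogeneous coefficient in the SDE for $\widetilde x_\theta^\rho$ is $A_\theta^{\rho,b}$, not $\partial_xb_\theta$. Indeed, since $\xi_\theta^\rho=\widetilde x_\theta^\rho+\widehat x_\theta$, the drift of $\widetilde x_\theta^\rho$ is
\[
A_\theta^{\rho,b}\xi_\theta^\rho+B_\theta^{\rho,b}(u-\overline u)-\partial_xb_\theta\,\widehat x_\theta-\partial_ub_\theta(u-\overline u)
=A_\theta^{\rho,b}\,\widetilde x_\theta^\rho+I_\theta^\rho,
\]
which is exactly the paper's decomposition. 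This does not damage your estimate (both $A_\theta^{\rho,b}$ and $\partial_xb_\theta$ are bounded by $L$), but the equation as you stated it is off by the term $(A_\theta^{\rho,b}-\partial_xb_\theta)\widetilde x_\theta^\rho$. Also, your reference for the SDE stability estimate in part (i) should be Lemma~\ref{myq1}, not Lemma~\ref{myq2}.
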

\begin{proof}
By the definition of $\widetilde{x}^{\rho}_{\theta}$, we obtain that
\begin{align*}
\begin{cases}
&d\widetilde {x}^{\rho}_{\theta}(t)=\rho^{-1}\left\{ b_{\theta}^{\rho}(t)-b_{\theta}(t)-\left[\rho\partial_x b_{\theta}(t)\widehat{x}_{\theta}(t)+\rho\partial_u b_{\theta}(s)(u(t)-\overline{u}(t))\right] \right\}dt\\
&\ \ \ \ \ \ \ \ \ \  +
\sum\limits_{i=1}^d\rho^{-1}\left\{ \sigma_{\theta}^{\rho,i}(t)-\sigma^i_{\theta}(t)-\left[\rho\partial_x \sigma^i_{\theta}(t)\widehat{x}_{\theta}(t)+\rho\partial_u \sigma^i_{\theta}(s)(u(t)-\overline{u}(t))\right] \right\}dW^i(t),\\
&\widetilde{x}^{\rho}_{\theta}(0)=0,
\end{cases}
\end{align*}
where $b_{\theta}^{\rho}(t)=b_{\theta}(t,x^{\rho}_{\theta}(t),u^{\rho}(t))$ and $\sigma_{\theta}^{\rho,i}(t)=\sigma^i_{\theta}(t,x^{\rho}_{\theta}(t),u^{\rho}(t))$.
For convenience, set
\begin{align*}
 &A_{\theta}^{\rho}(t)=\int^1_0 \partial_x b_{\theta}(t,\overline{x}_{\theta}(t)+\lambda\rho(\widetilde{x}_{\theta}^{\rho}(t)+\widehat{x}_{\theta}(t)), \overline{u}(t)+\lambda\rho(u(t)-\overline{u}(t)))d\lambda,
 \\
 &B_{\theta}^{\rho,i}(t)=\int^1_0 \partial_x \sigma^i_{\theta}(t,\overline{x}_{\theta}(t)+\lambda\rho(\widetilde{x}_{\theta}^{\rho}(t)+\widehat{x}_{\theta}(t)), \overline{u}(t)+\lambda\rho(u(t)-\overline{u}(t)))d\lambda,\\
 &C_{\theta}^{\rho}(t)= \int^1_0 \left[\partial_u b_{\theta}(t,\overline{x}_{\theta}(t)+\lambda\rho(\widetilde{x}_{\theta}^{\rho}(t)+\widehat{x}_{\theta}(t)), \overline{u}(t)+\lambda\rho(u(t)-\overline{u}(t)))- \partial_u b_{\theta}(t)\right](u(t)-\overline{u}(t))d\lambda\\
 &\ \ \ \ \ \ \ \ \ \ \ \ \ \ +\left[A_{\theta}^{\rho}(t)- \partial_x b_{\theta}(t)\right]\widehat{x}_{\theta}(t),\\
 &D_{\theta}^{\rho,i}(t)= \int^1_0 \left[\partial_u \sigma^i_{\theta}(t,\overline{x}_{\theta}(t)+\lambda\rho(\widetilde{x}_{\theta}^{\rho}(t)+\widehat{x}_{\theta}(t)), \overline{u}(t)+\lambda\rho(u(t)-\overline{u}(t)))- \partial_u \sigma^i_{\theta}(t)\right](u(t)-\overline{u}(t))d\lambda\\
 &\ \ \ \ \ \ \ \ \ \ \ \ \ \ +\left[B_{\theta}^{\rho,i}(t)- \partial_x \sigma^i_{\theta}(t)\right]\widehat{x}_{\theta}(t).
\end{align*}
Thus, the process $\widetilde{x}^{\rho}_{\theta}$ could be regarded  as the solution to the following SDE:
\begin{align*}
\widetilde {x}^{\rho}_{\theta}(t)=\int^t_0\left(A_{\theta}^{\rho}(s)\widetilde {x}^{\rho}_{\theta}(s)+C_{\theta}^{\rho}(s)\right)ds+\sum\limits_{i=1}^d\int^t_0\left(B_{\theta}^{\rho,i}(s)\widetilde {x}^{\rho}_{\theta}(s)+D_{\theta}^{\rho,i}(s)\right)dW^i(s).
\end{align*}
Note that $\partial_x b_{\theta},\partial_u b_{\theta}, \partial_x \sigma^i_{\theta}$ and $\partial_u \sigma^i_{\theta}$
are bounded by some constant $C(L)$. Then, applying Lemma \ref{myq1}  in appendix A and inequality \eqref{myw102} yields that
\begin{align}\label{myw1041}
\begin{split}
\mathbb{E}\left[\sup_{0\leq t\leq T}|\widetilde {x}^{\rho}_{\theta}(t)|^4\right]&\leq C(L,T)\mathbb{E}\left[\int^T_0\left(|C_{\theta}^{\rho}(s)|^4+\sum\limits_{i=1}^d|D_{\theta}^{\rho,i}(s)|^4\right)ds  \right] \\
&\leq C(L,T)\mathbb{E}\left[\int^T_0(|u(t)|^4+|\overline{u}(t)|^4)dt\right],
\end{split}
\end{align}
which establishes the first inequality.

Next, we prove the term (ii). It suffices to show that
\[
\lim\limits_{\rho\rightarrow 0}\sup\limits_{\theta\in\Theta}\mathbb{E}\left[\int^T_0|C_{\theta}^{\rho}(t)|^4dt \right]=0,
\]
since the other case could be proved in a similar fashion.  According to H\"{o}lder's inequality, we get that
\begin{align*}
|C_{\theta}^{\rho}(t)|^4\leq& 8\int^1_0|\partial_u b_{\theta}(t,\overline{x}_{\theta}(t)+\lambda\rho(\widetilde{x}_{\theta}^{\rho}(t)+\widehat{x}_{\theta}(t)), \overline{u}(t)+\lambda\rho(u(t)-\overline{u}(t)))- \partial_u b_{\theta}(t)|^4|u(t)-\overline{u}(t)|^4d\lambda\\
&+8\int^1_0|\partial_x b_{\theta}(t,\overline{x}_{\theta}(t)+\lambda\rho(\widetilde{x}_{\theta}^{\rho}(t)+\widehat{x}_{\theta}(t)), \overline{u}(t)+\lambda\rho(u(t)-\overline{u}(t)))- \partial_x b_{\theta}(t)|^4|\widehat{x}_{\theta}(t)|^4d\lambda.
\end{align*}
From assumption (H3), we derive that,
\begin{align*}
&\left|\partial_{v} b_{\theta}(t,\overline{x}_{\theta}(t)+\lambda\rho(\widetilde{x}_{\theta}^{\rho}(t)+\widehat{x}_{\theta}(t)), \overline{u}(t)+\lambda\rho(u(t)-\overline{u}(t)))- \partial_{v} b_{\theta}(t)\right|\leq \overline{\omega}(2N\rho),\ \text{for $v=u,x$},
\end{align*}
whenever $|\widetilde{x}_{\theta}^{\rho}(t)+\widehat{x}_{\theta}(t)|\leq N$ and $|u(t)-\overline{u}(t)|\leq N$ for each $N>0$.
Since $\partial_xb_{\theta}$ and $\partial_ub_{\theta}$ are bounded by some constant $C(L)$,  it holds that
\begin{align}\label{myw5045}
\begin{split}
&\mathbb{E}\left[\int^T_0|C_{\theta}^{\rho}(t)|^4dt\right]\leq 8(\overline{\omega}(2N\rho))^4\mathbb{E}\left[\int^T_0 \left(|u(t)-\overline{u}(t)|^4+|\widehat{x}_{\theta}(t)|^4\right)dt\right]\\
&\ \ \ \ \ +C(L)\mathbb{E}\left[\int^T_0 \left(|u(t)-\overline{u}(t)|^4+|\widehat{x}_{\theta}(t)|^4\right)\left(I_{\{|\widetilde{x}_{\theta}^{\rho}(t)+\widehat{x}_{\theta}(t)|\geq N\}}+I_{\{|u(t)-\overline{u}(t)|\geq N\}}\right)dt\right].
\end{split}
\end{align}

On the other hand, applying H\"{o}lder's inequality  yields that
\begin{align*}
\mathbb{E}\left[\int^T_0 |u(t)-\overline{u}(t)|^4I_{\{|\widetilde{x}_{\theta}^{\rho}(t)+\widehat{x}_{\theta}(t)|\geq N\}}dt\right]
&\leq \mathbb{E}\left[\int^T_0 |u(t)-\overline{u}(t)|^pdt\right]^{\frac{4}{p}}\mathbb{E}\left[\int^T_0I_{\{|\widetilde{x}_{\theta}^{\rho}(t)+\widehat{x}_{\theta}(t)|\geq N\}}dt\right]^{\frac{p-4}{p}}\\
&\leq \mathbb{E}\left[\int^T_0 |u(t)-\overline{u}(t)|^pdt\right]^{\frac{4}{p}}\mathbb{E}\left[\int^T_0\frac{|\widetilde{x}_{\theta}^{\rho}(t)+\widehat{x}_{\theta}(t)|} {N}dt\right]^{\frac{p-4}{p}} \\ &\leq C(L,T,p)\bigg(1+\mathbb{E}\bigg[\int^T_0(|u(t)|^p+|\overline{u}(t)|^p)dt\bigg]\bigg){N^{\frac{4-p}{p}}},
\end{align*}
where we have used estimates \eqref{myw102} and \eqref{myw1041} in the last inequality. By a similar analysis, we could also get that
\begin{align*}
\mathbb{E}\left[\int^T_0 \left(|u(t)-\overline{u}(t)|^4+|\widehat{x}_{\theta}(t)|^4\right)\left(I_{\{|\widetilde{x}_{\theta}^{\rho}(t)+\widehat{x}_{\theta}(t)|\geq N\}}+I_{\{|u(t)-\overline{u}(t)|\geq N\}}\right)dt\right] \leq C(L,T,u,\overline{u},p){N^{\frac{4-p}{p}}}.
\end{align*}

Consequently, with the help of inequality \eqref{myw5045}, we deduce that, for each $N>0$,
\begin{align*}
&\sup\limits_{\theta\in\Theta}\mathbb{E}\left[\int^T_0|C_{\theta}^{\rho}(t)|^4dt\right]\leq   C(L,T,u,\overline{u},p)\left(|\overline{\omega}(2N\rho)|^4+{N^{\frac{4-p}{p}}}\right),
\end{align*}
Sending $\rho\rightarrow 0$ and then   $N\rightarrow\infty$, we could get the desired equation.
\end{proof}

Next, we consider the corresponding variational BSDE on $[0,T]$: for each $\theta\in\Theta$,
\begin{align}\label{myw106}
\begin{split}
\widehat{y}_{\theta}(t)=&\partial_x\varphi_{\theta}(\overline{x}_{\theta}(T))\widehat{x}_{\theta}(T)-\int^T_t\widehat{z}_{\theta}(s)dW(s)\\
&+\int^T_t\left[\partial_xf_{\theta}(s)\widehat{x}_{\theta}(s)+\partial_yf_{\theta}(s)\widehat{y}_{\theta}(s)+\partial_zf_{\theta}(s)\left(\widehat{z}_{\theta}(s)\right)^{\top}+\partial_uf_{\theta}(s)(u(s)-\overline{u}(s))\right]ds,
\end{split}
\end{align}
where $f_{\theta}(t)=f_{\theta}(t,\overline{x}_{\theta}(t),\overline{y}_{\theta}(t),\overline{z}_{\theta}(t),\overline{u}(t))$ and $\partial_xf_{\theta}(t),\partial_yf_{\theta}(t),\partial_zf_{\theta}(t),\partial_uf_{\theta}(t)$ are defined in a similar way.
It follows from assumption (H1) that $\partial_y f_{\theta}$, $\partial_z f_{\theta}$
are uniformly bounded and $\partial_x\varphi_{\theta}, \partial_x f_{\theta},\partial_u f_{\theta}$ are bounded by $C(L)(1+|x|+|u|)$. Then, from Lemma \ref{myq2} in appendix A, the BSDE \eqref{myw106} admits a unique solution $(\widehat{y}_{\theta},\widehat{z}_{\theta})\in \mathcal{S}^{\frac{p}{2}}(0,T;\mathbb{R}^n)\times\mathcal{H}^{\frac{p}{2}}(0,T;\mathbb{R}^n)$. Moreover, it holds that, for each $ q\in[2,\frac{p}{2}]$,
\begin{align}\label{myw107}
\mathbb{E}\left[\sup_{0\leq t\leq T}|\widehat{y}_{\theta}(t)|^q+\left(\int^T_0|\widehat{z}_{\theta}(t)|^2dt\right)^{\frac{q}{2}}\right]\leq C(L,T,q)\mathbb{E}\left[|x_0|^{2q}+\int^T_0(|u(t)|^{2q}+|\overline{u}(t)|^{2q})dt\right].
\end{align}
\begin{lemma}\label{myw202} Assume that \emph{(H1)}-\emph{(H3)}  hold. Then, for each $\theta\in\Theta$,
\begin{description}
\item[(i)] $\mathbb{E}\left[\sup_{0\leq t\leq T}|\widetilde{y}^{\rho}_{\theta}(t)|^2+\int^T_0 |\widetilde{z}^{\rho}_{\theta}(t)|^2dt\right]\leq C(L,T)\mathbb{E}\left[|x_0|^4+\int^T_0(|u(t)|^{4}+|\overline{u}(t)|^{4})dt\right]$,
\item[(ii)] $\lim\limits_{\rho\rightarrow 0}\sup\limits_{\theta\in\Theta}\mathbb{E}\left[\sup_{0\leq t\leq T}|\widetilde{y}^{\rho}_{\theta}(t)|^2+\int^T_0 |\widetilde{z}^{\rho}_{\theta}(t)|^2dt\right]=0$,
\end{description}
where $
\widetilde{y}_{\theta}^{\rho}(t):=\rho^{-1}\left(y_{\theta}^{\rho}(t)- \overline{y}_{\theta}(t)\right)-\widehat{y}_{\theta}(t)
$ and $\widetilde{z}_{\theta}^{\rho}(t):=\rho^{-1}\left(z_{\theta}^{\rho}(t)- \overline{z}_{\theta}(t)\right)-\widehat{z}_{\theta}(t)$.
\end{lemma}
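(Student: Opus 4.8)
The plan is to mirror the strategy used for the forward remainder in Lemma \ref{myw201}, now for the backward component, by first identifying the linear BSDE solved by $(\widetilde{y}_\theta^\rho,\widetilde{z}_\theta^\rho)$. Subtracting the BSDEs for $y_\theta^\rho$ and $\overline{y}_\theta$, dividing by $\rho$, and subtracting the variational BSDE \eqref{myw106}, I would expand the increments of $\varphi_\theta$ and $f_\theta$ along the segment joining $(\overline{x}_\theta,\overline{y}_\theta,\overline{z}_\theta,\overline{u})$ to $(x_\theta^\rho,y_\theta^\rho,z_\theta^\rho,u^\rho)$ via the integral form of the mean value theorem, exactly as the coefficients $A_\theta^\rho,\dots,D_\theta^\rho$ were formed in Lemma \ref{myw201}. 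Writing $\overline{B}_\theta^\rho(s)=\int_0^1\partial_y f_\theta(\cdots)\,d\lambda$ and $\overline{C}_\theta^\rho(s)=\int_0^1\partial_z f_\theta(\cdots)\,d\lambda$ for the coefficients of $\widetilde{y}_\theta^\rho,\widetilde{z}_\theta^\rho$ (uniformly bounded by (H1)), and using $\rho^{-1}(x_\theta^\rho-\overline{x}_\theta)=\widetilde{x}_\theta^\rho+\widehat{x}_\theta$ together with $\rho^{-1}(u^\rho-\overline{u})=u-\overline{u}$, one finds that $(\widetilde{y}_\theta^\rho,\widetilde{z}_\theta^\rho)$ solves a linear BSDE with terminal value $\xi_\theta^\rho$ and generator $\overline{B}_\theta^\rho\widetilde{y}_\theta^\rho+\overline{C}_\theta^\rho\widetilde{z}_\theta^\rho+I_\theta^\rho$. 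The inhomogeneous term $I_\theta^\rho$ collects the external driving term $\overline{A}_\theta^\rho\widetilde{x}_\theta^\rho$ with $\overline{A}_\theta^\rho=\int_0^1\partial_x f_\theta(\cdots)\,d\lambda$, together with the coefficient-difference remainders $[\overline{A}_\theta^\rho-\partial_x f_\theta(s)]\widehat{x}_\theta$, $[\overline{B}_\theta^\rho-\partial_y f_\theta(s)]\widehat{y}_\theta$, $[\overline{C}_\theta^\rho-\partial_z f_\theta(s)]\widehat{z}_\theta$ and $[\int_0^1\partial_u f_\theta(\cdots)\,d\lambda-\partial_u f_\theta(s)](u-\overline{u})$; the terminal value $\xi_\theta^\rho$ has the analogous structure with $\partial_x\varphi_\theta$ replacing $\partial_x f_\theta$.

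For part (i), I would invoke the $L^2$ a priori estimate for linear BSDEs (Lemma \ref{myq2}), which, since $\overline{B}_\theta^\rho,\overline{C}_\theta^\rho$ are uniformly bounded, controls $\mathbb{E}[\sup_t|\widetilde{y}_\theta^\rho|^2+\int_0^T|\widetilde{z}_\theta^\rho|^2\,dt]$ by $C(L,T)\,\mathbb{E}[|\xi_\theta^\rho|^2+(\int_0^T|I_\theta^\rho|\,ds)^2]$. The only nontrivial contributions come from the linear-growth derivatives $\partial_x f_\theta,\partial_u f_\theta,\partial_x\varphi_\theta$ (bounded by $C(L)(1+|x|+|u|)$ under (H1)) multiplied by the variation processes. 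These I would handle by Cauchy--Schwarz in time followed by H\"older in $(\omega,t)$, e.g. $\mathbb{E}[\int|\overline{A}_\theta^\rho|^2|\widetilde{x}_\theta^\rho|^2]\le(\mathbb{E}\int|\overline{A}_\theta^\rho|^4)^{1/2}(\mathbb{E}\int|\widetilde{x}_\theta^\rho|^4)^{1/2}$: the first factor is finite because $x_\theta^\rho,\overline{x}_\theta,u,\overline{u}\in L^p$ with $p>4$ uniformly in $\theta$ by Lemma \ref{myw301}, and the second is controlled by Lemma \ref{myw201}(i), while $\widehat{x}_\theta,\widehat{y}_\theta,\widehat{z}_\theta$ are bounded by \eqref{myw102} and \eqref{myw107}. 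Since every bound is uniform in $\theta$, the claimed estimate follows.

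For part (ii), I would use the same a priori estimate and show the right-hand side tends to $0$ uniformly in $\theta$, splitting $I_\theta^\rho$ and $\xi_\theta^\rho$ into the two groups above. The driving pieces $\overline{A}_\theta^\rho\widetilde{x}_\theta^\rho$ and the terminal part proportional to $\widetilde{x}_\theta^\rho(T)$ vanish by the same H\"older argument, now exploiting that $\sup_\theta\mathbb{E}[\sup_t|\widetilde{x}_\theta^\rho|^4]\to0$ (Lemma \ref{myw201}(ii)) while the growth factors remain uniformly bounded. For the coefficient-difference remainders the argument-increment is of order $\lambda\rho(\widetilde{x}_\theta^\rho+\widehat{x}_\theta,\dots)$, so by the uniform modulus of continuity (H3) each difference is at most $\overline{\omega}(CN\rho)$ on the event where the variation processes are bounded by $N$; on the complementary large-deviation event I would use the boundedness of $\partial_y f_\theta,\partial_z f_\theta$, or the linear growth of $\partial_x f_\theta,\partial_u f_\theta,\partial_x\varphi_\theta$, together with Markov's inequality, reproducing a bound of type $C\,N^{(4-p)/p}$ as in Lemma \ref{myw201}(ii). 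Letting $\rho\to0$ and then $N\to\infty$ annihilates both contributions, again uniformly in $\theta$.

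The main obstacle I anticipate is this final step for the terms carrying $\partial_x f_\theta$, $\partial_u f_\theta$ and $\partial_x\varphi_\theta$: in contrast to the drift/diffusion case of Lemma \ref{myw201}, these derivatives are not bounded but only of linear growth, so the truncation at level $N$ must be interlaced with H\"older's inequality to absorb simultaneously the growth factor, the modulus $\overline{\omega}$, and the probability of the truncation event, all while retaining uniformity in $\theta$ — which is secured by the $\theta$-uniform $L^p$ bounds of Lemma \ref{myw301} and the $\theta$-independent modulus in (H3).
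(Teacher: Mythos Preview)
Your proposal is correct and follows essentially the same route as the paper: you derive the linear BSDE for $(\widetilde y_\theta^\rho,\widetilde z_\theta^\rho)$ via the integral mean-value expansion, apply the $L^2$ a priori estimate of Lemma~\ref{myq2}, and then treat the terminal and driver remainders by the same truncation-at-level-$N$ plus modulus-of-continuity argument used in Lemma~\ref{myw201}, with H\"older's inequality absorbing the linear growth of $\partial_x f_\theta,\partial_u f_\theta,\partial_x\varphi_\theta$. The only point worth making explicit when you write it out is that the truncation events for the $H$-type term must also include $\{|\widetilde y_\theta^\rho+\widehat y_\theta|\ge N\}$ and $\{|\widetilde z_\theta^\rho+\widehat z_\theta|\ge N\}$, whose probabilities you control via part~(i) (already established) and \eqref{myw107}.
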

\begin{proof}
By the definition of $\widetilde{y}^{\rho}_{\theta}$ and $\widetilde{z}^{\rho}_{\theta}$, we obtain that
\begin{align*}
&\widetilde {y}^{\rho}_{\theta}(t)=\rho^{-1}\left[\varphi_{\theta}(x^{\rho}_{\theta}(T))-\varphi_{\theta}(\overline{x}_{\theta}(T))-\rho\partial_x\varphi_{\theta}(\overline{x}_{\theta}(T))\widehat{x}_{\theta}(T)\right]-\int^T_t\widetilde{z}^{\rho}_{\theta}(s)dW(s)\\
&\ \ +\int^T_t\left[\rho^{-1}(f^{\rho}_{\theta}(t)-f_{\theta}(t))- \partial_xf_{\theta}(s)\widehat{x}_{\theta}(s)-\partial_yf_{\theta}(s)\widehat{y}_{\theta}(s)-\partial_zf_{\theta}(s)\left(\widehat{z}_{\theta}(s)\right)^{\top}
-\partial_uf_{\theta}(s)(u(s)-\overline{u}(s))\right]ds,
\end{align*}
where $f^{\rho}_{\theta}(t)=f_{\theta}(t,x^{\rho}_{\theta}(t),y^{\rho}_{\theta}(t),z^{\rho}_{\theta}(t),u^{\rho}(t))$.
To simplify symbols,  set $\gamma=(x,y,z)$ and
\begin{align*}
&J^{1,\rho}_{\theta}=\int^1_0\partial_x\varphi_{\theta}(\overline{x}_{\theta}(T)+\lambda\rho(\widetilde{x}^{\rho}_{\theta}(T)+\widehat{x}_{\theta}(T)))d\lambda\widetilde{x}^{\rho}_{\theta}(T),\\
&  J^{2,\rho}_{\theta}=\int^1_0\left[\partial_x\varphi_{\theta}(\overline{x}_{\theta}(T)+\lambda\rho(\widetilde{x}^{\rho}_{\theta}(T)+\widehat{x}_{\theta}(T)))- \partial_x\varphi_{\theta}(\overline{x}_{\theta}(T))\right]d\lambda\widehat{x}_{\theta}(T),\\
&E_{\theta}^{\rho}(t)=\int^1_0 \partial_x f_{\theta}(t,\overline{\gamma}_{\theta}(t)+\lambda\rho(\widetilde{\gamma}_{\theta}^{\rho}(t)+\widehat{\gamma}_{\theta}(t)) ,\overline{u}(t)+\lambda\rho(u(t)-\overline{u}(t)))d\lambda,
 \\
 &F_{\theta}^{\rho}(t)=\int^1_0 \partial_y f_{\theta}(t,\overline{\gamma}_{\theta}(t)+\lambda\rho(\widetilde{\gamma}_{\theta}^{\rho}(t)+\widehat{\gamma}_{\theta}(t)) ,\overline{u}(t)+\lambda\rho(u(t)-\overline{u}(t)))d\lambda,\\
 &G_{\theta}^{\rho}(t)= \int^1_0 \partial_z f_{\theta}(t,\overline{\gamma}_{\theta}(t)+\lambda\rho(\widetilde{\gamma}_{\theta}^{\rho}(t)+\widehat{\gamma}_{\theta}(t)) ,\overline{u}(t)+\lambda\rho(u(t)-\overline{u}(t)))d\lambda,\\
 &H_{\theta}^{\rho}(t)= \int^1_0 \left[\partial_u f_{\theta}(t,\overline{\gamma}_{\theta}(t)+\lambda\rho(\widetilde{\gamma}_{\theta}^{\rho}(t)+\widehat{\gamma}_{\theta}(t)) ,\overline{u}(t)+\lambda\rho(u(t)-\overline{u}(t)))- \partial_u f_{\theta}(t)\right](u(t)-\overline{u}(t))d\lambda\\
 &\ \ \ \ \ \ \ \ \ \ \ \ \ \ +\left[E_{\theta}^{\rho}(t)- \partial_x f_{\theta}(t)\right]\widehat{x}_{\theta}(t)+\left[F_{\theta}^{\rho}(t)- \partial_y f_{\theta}(t)\right]\widehat{y}_{\theta}(t)+\left[G_{\theta}^{\rho}(t)- \partial_z f_{\theta}(t)\right](\widehat{z}_{\theta}(t))^{\top}.
\end{align*}
Thus, the pair of processes $(\widetilde{y}^{\rho}_{\theta},\widetilde{z}^{\rho}_{\theta})$ satisfies the following BSDE on $[0,T]$:
\begin{align*}
\widetilde {y}^{\rho}_{\theta}(t)=J^{1,\rho}_{\theta}+J^{2,\rho}_{\theta}+\int^T_t\left(E_{\theta}^{\rho}(s)\widetilde {x}^{\rho}_{\theta}(s)+F_{\theta}^{\rho}(s)\widetilde {y}^{\rho}_{\theta}(s)+G_{\theta}^{\rho}(s)(\widetilde {z}^{\rho}_{\theta}(s))^{\top}+H_{\theta}^{\rho}(s)\right)ds-\int^T_t\widetilde {z}^{\rho}_{\theta}(s)dW(s).
\end{align*}
Applying Lemma \ref{myq2}  in appendix  A  yields that
\begin{align}\label{myw104}
\mathbb{E}\left[\sup_{0\leq t\leq T}|\widetilde{y}^{\rho}_{\theta}(t)|^2+\int^T_0 |\widetilde{z}^{\rho}_{\theta}(t)|^2dt\right]\leq C(L,T)\mathbb{E}\left[\left|J^{1,\rho}_{\theta}\right|^2+\left|J^{2,\rho}_{\theta}\right|^2+\left|\int^T_0\left(|E_{\theta}^{\rho}(t)\widetilde {x}^{\rho}_{\theta}(t)|+|H_{\theta}^{\rho}(t)|\right)dt\right|^2\right].
\end{align}

Recalling  assumption (H1) and the fact that $\rho(\widetilde{x}^{\rho}_{\theta}(t)+\widehat{x}_{\theta}(t))={x}^{\rho}_{\theta}(t)-\overline{x}_{\theta}(t)$, we could obtain that
\begin{align*}
&|J^{1,\rho}_{\theta}|\leq C(L)(1+|\overline{x}_{\theta}(T)|+|{x}^{\rho}_{\theta}(T)|)|\widetilde{x}^{\rho}_{\theta}(T)|,\
|J^{2,\rho}_{\theta}|\leq C(L)(1+|\overline{x}_{\theta}(T)|+|{x}^{\rho}_{\theta}(T)|)|\widehat{x}_{\theta}(T)|,\\
&|E_{\theta}^{\rho}(t)|\leq C(L)(1+|\overline{x}_{\theta}(t)|+|{x}^{\rho}_{\theta}(t)|+|u(t)|+|\overline{u}(t)|),\\
&|H_{\theta}^{\rho}(t)|\leq  C(L)(1+|\overline{x}_{\theta}(t)|^2+|{x}^{\rho}_{\theta}(t)|^2+|\widehat{x}_{\theta}(t)|^2+|u(t)|^2+|\overline{u}(t)|^2+|\widehat{y}^{\rho}_{\theta}(t)|+|\widehat {z}^{\rho}_{\theta}(t)|),
\end{align*}
which  together with Lemma \ref{myw301}, inequalities \eqref{myw102}, \eqref{myw1041}, \eqref{myw107} and \eqref{myw104} indicates that
\begin{align}\label{myw203}
\mathbb{E}\left[\sup_{0\leq t\leq T}|\widetilde{y}^{\rho}_{\theta}(t)|^2+\int^T_0 |\widetilde{z}^{\rho}_{\theta}(t)|^2dt\right]\leq C(L,T)\mathbb{E}\left[|x_0|^4+\int^T_0(|u(t)|^{4}+|\overline{u}(t)|^{4})dt\right].
\end{align}

Now, we are going to prove that the right side of inequality \eqref{myw104} converges to $0$ uniformly as $\rho\rightarrow 0$. The remainder of the proof will be given in  the following three steps.

{\bf Step 1 ($J^{1,\rho}_{\theta}+J^{2,\rho}_{\theta}$-term).}
By H\"{o}lder's inequality and Lemma \ref{myw301}, Lemma \ref{myw201}, it holds that
\begin{align*}
&\lim\limits_{\rho\rightarrow 0}\sup\limits_{\theta\in\Theta}\mathbb{E}\left[\left|J^{1,\rho}_{\theta}\right|^2\right]\leq C(L,T)\lim\limits_{\rho\rightarrow 0} \sup\limits_{\theta\in\Theta}\mathbb{E}\left[1+\left|\overline{x}_{\theta}(T)\right|^4+\left|{x}^{\rho}_{\theta}(T)\right|^4\right]^{\frac{1}{2}}\mathbb{E}\left[\left|\widetilde{x}^{\rho}_{\theta}(T)\right|^4\right]^{\frac{1}{2}}=0.
\end{align*}

On the other hand, by a similar analysis as  Lemma \ref{myw201}, we have that for each $N>0$,
\begin{align*}
&|J^{2,\rho}_{\theta}|
\leq\overline{\omega}(N\rho)|\widehat{x}_{\theta}(T)|+C(L)\left(1+|\overline{x}_{\theta}(T)|^2+|{x}^{\rho}_{\theta}(T)|^2+|\widehat{x}_{\theta}(T)|^2\right)I_{\{|\widetilde{x}_{\theta}^{\rho}(T)+\widehat{x}_{\theta}(T)|\geq N\}}.
\end{align*}
Therefore,  we deduce that
\begin{align*}
&\mathbb{E}\left[\left|J^{2,\rho}_{\theta}\right|^2\right]\leq 2(\overline{\omega}(N\rho))^2\mathbb{E}\left[|\widehat{x}_{\theta}(T)|^2\right]
+C(L)\mathbb{E}\left[\left(1+|\overline{x}_{\theta}(T)|^4+|{x}^{\rho}_{\theta}(T)|^4+|\widehat{x}_{\theta}(T)|^4\right)I_{\{|\widetilde{x}_{\theta}^{\rho}(T)+\widehat{x}_{\theta}(T)|\geq N\}}\right].
\end{align*}
With the help of H\"{o}lder's inequality,  we conclude that
\begin{align*}
&\mathbb{E}\left[\left(1+|\overline{x}_{\theta}(T)|^4+|{x}^{\rho}_{\theta}(T)|^4+|\widehat{x}_{\theta}(T)|^4\right)I_{\{|\widetilde{x}_{\theta}^{\rho}(T)+\widehat{x}_{\theta}(T)|\geq N\}}\right]\\
&\leq C \mathbb{E}\left[\left(1+|\overline{x}_{\theta}(T)|^p+|{x}^{\rho}_{\theta}(T)|^p+|\widehat{x}_{\theta}(T)|^p\right)\right]^{\frac{4}{p}}\mathbb{E}\left[\frac{|\widetilde{x}_{\theta}^{\rho}(T)+\widehat{x}_{\theta}(T)|} {N}\right]^{\frac{p-4}{p}}
\\
& \leq C(L,T,x_0,p)\bigg(1+\mathbb{E}\bigg[\int^T_0(|u(t)|^p+|\overline{u}(t)|^p)dt\bigg]\bigg){N^{\frac{4-p}{p}}}.
\end{align*}
It follows that, for each $N>0$,
\begin{align*}
&\sup\limits_{\theta\in\Theta}\mathbb{E}\left[\left|J^{2,\rho}_{\theta}\right|^2\right]\leq   C(L,T,x_0,p)\bigg(1+\mathbb{E}\bigg[\int^T_0(|u(t)|^p+|\overline{u}(t)|^p)dt\bigg]\bigg)\left( |\overline{\omega}(N\rho)|^2+{N^{\frac{4-p}{p}}}\right).
\end{align*}
Sending $\rho\rightarrow 0$ and letting $N\rightarrow\infty$, we could get the desired equation.

{\bf Step 2 ($E_{\theta}^{\rho}(t)\widetilde {x}^{\rho}_{\theta}(t)$-term).} Using  H\"{o}lder's inequality  and Lemma \ref{myw301} again, we conclude that
\begin{align*}
\mathbb{E}\left[\left(\int^T_0|E_{\theta}^{\rho}(t)\widetilde {x}^{\rho}_{\theta}(t)|dt\right)^2\right]\leq &
\mathbb{E}\left[\int^T_0|E_{\theta}^{\rho}(t)|^4dt\right]^{\frac{1}{2}} \mathbb{E}\left[\int^T_0\left|\widetilde{x}^{\rho}_{\theta}(t)\right|^4dt\right]^{\frac{1}{2}}\\ \leq& C(L,T,x_0)\bigg(1+\mathbb{E}\bigg[\int^T_0(|u(t)|^4+|\overline{u}(t)|^4)dt\bigg]\bigg)\mathbb{E}\left[\sup\limits_{t\in[0,T]}\left|\widetilde{x}^{\rho}_{\theta}(t)\right|^4\right]^{\frac{1}{2}},
\end{align*}
which together with  Lemma \ref{myw201} implies the desired equation holds.

{\bf Step 3 ($H^{\rho}_{\theta}$-term).}
From assumption (H3), it holds that
\begin{align*}
\left|\partial_{v} f_{\theta}(t,\overline{\gamma}_{\theta}(t)+\lambda\rho(\widetilde{\gamma}_{\theta}^{\rho}(t)+\widehat{\gamma}_{\theta}(t)), \overline{u}(t)+\lambda\rho(u(t)-\overline{u}(t)))- \partial_{v} f_{\theta}(t)\right|\leq \overline{\omega}(4N\rho),\ \text{for $v=u,x,y,z$},
	\end{align*}	
whenever $|\widetilde{x}_{\theta}^{\rho}(t)+\widehat{x}_{\theta}(t)|\leq N$, $|\widetilde{y}_{\theta}^{\rho}(t)+\widehat{y}_{\theta}(t)|\leq N$, $|\widetilde{z}_{\theta}^{\rho}(t)+\widehat{z}_{\theta}(t)|\leq N$  and $|u(t)-\overline{u}(t)|\leq N$ for each $N>0$.
 Thus, by a similar analysis as step 1,  we derive that
\begin{align*}
&\mathbb{E}\left[\left(\int^T_0|\left[E_{\theta}^{\rho}(t)- \partial_x f_{\theta}(t)\right]\widehat{x}_{\theta}(t)|dt\right)^2\right]\leq C(L)\mathbb{E}\bigg[\int^T_0|\overline{\omega}(4N\rho)\widehat{x}_{\theta}(t)|^2dt\\
& +\bigg|\int^T_0 \left(1+|\Gamma(t)|^2\right)\left(I_{\{|\widetilde{x}_{\theta}^{\rho}(t)+\widehat{x}_{\theta}(t)|\geq N\}}+I_{\{|\widetilde{y}_{\theta}^{\rho}(t)+\widehat{y}_{\theta}(t)|\geq N\}}+I_{\{|\widetilde{z}_{\theta}^{\rho}(t)+\widehat{z}_{\theta}(t)|\geq N\}}+I_{\{|u(t)-\overline{u}(t)|\geq N\}}\right)dt\bigg|^2\bigg],
\end{align*}
where $\Gamma(t):=|\overline{x}_{\theta}(t)|+|\widehat{x}_{\theta}(t)|+|{x}^{\rho}_{\theta}(t)|+|u(t)|+|\overline{u}(t)|$.
By a direct computation, we have that
\begin{align*}
&\mathbb{E}\bigg[\bigg|\int^T_0 \left(1+|\Gamma(t)|^2\right)\left(I_{\{|\widetilde{x}_{\theta}^{\rho}(t)+\widehat{x}_{\theta}(t)|\geq N\}}+I_{\{|\widetilde{y}_{\theta}^{\rho}(t)+\widehat{y}_{\theta}(t)|\geq N\}}+I_{\{|\widetilde{z}_{\theta}^{\rho}(t)+\widehat{z}_{\theta}(t)|\geq N\}}+I_{\{|u(t)-\overline{u}(t)|\geq N\}}\right)dt\bigg|^2\bigg]\\
&  \leq  C(L,T,x_0,p)\bigg(1+\mathbb{E}\bigg[\int^T_0(|u(t)|^p+|\overline{u}(t)|^p)dt\bigg]\bigg)^{\frac{4}{p}}\mathbb{E}\left[\int^T_0\frac{|\widetilde{\gamma}_{\theta}^{\rho}(t)+\widehat{\gamma}_{\theta}(t)|+|u(t)-\overline{u}(t)|}{N}dt\right]^{\frac{p-4}{p}}\\&
 \leq  C(L,T,x_0,p)\bigg(1+\mathbb{E}\bigg[\int^T_0(|u(t)|^p+|\overline{u}(t)|^p)dt\bigg]\bigg) N^{\frac{4-p}{p}},
\end{align*}
where we have used estimates  \eqref{myw102}, \eqref{myw1041}, \eqref{myw107} and \eqref{myw203} in the last inequality.  As a result, we derive that for each $N>0$
\begin{align*}
&\sup\limits_{\theta\in\Theta}\mathbb{E}\left[\left(\int^T_0|\left[E_{\theta}^{\rho}(t)- \partial_x f_{\theta}(t)\right]\widehat{x}_{\theta}(t)|dt\right)^2\right]\\
&\leq
 C(L,T,x_0,p)\bigg(1+\mathbb{E}\bigg[\int^T_0(|u(t)|^p+|\overline{u}(t)|^p)dt\bigg]\bigg)\left(|\overline{\omega}(4N\rho)|^2+N^{\frac{4-p}{p}}\right).
\end{align*}

By a similar argument, we could also obtain that, for each $N>0$,
\begin{align*}
&\sup\limits_{\theta\in\Theta}\mathbb{E}\left[\left(\int^T_0|H_{\theta}^{\rho}(t)|dt\right)^2\right]\leq C(L,T,x_0,u,\overline{u},p)\left(|\overline{\omega}(4N\rho)|^2+N^{\frac{4-p}{p}}\right).
\end{align*}
Letting $\rho$ tend to $0$ and then $N$ tend to $\infty$, we could get the desired result.
\end{proof}

\begin{remark}{\upshape
Since our value function involves a family  of uncertainty parameters $\theta$, we establish the convergence results for variational SDE and BSDE uniformly with respect to $\theta$,  which is crucial for the main result; see Lemma \ref{myw208} below.}
\end{remark}

Finally, we are going to discuss the variational inequality. For this purpose, we introduce the following  subset of $\mathcal{Q}$:
 for each $u\in\mathcal{U}[0,T]$,
\[
\mathcal{Q}^{u}=\left\{Q\in\mathcal{Q}| J({u})=\int_{\Theta} {y}_{\theta}(0)Q(d\theta)\right\}.
\]
\begin{theorem}\label{myw210} Suppose that \emph{(H1)}-\emph{(H5)}  hold. Then, there exists a probability $\overline{Q}\in\mathcal{Q}^{\overline{u}}$ such that
\[
\inf\limits_{u\in\mathcal{U}[0,T]}\int_{\Theta}\widehat{y}_{\theta}(0)\overline{Q}(d\theta)\geq 0.
\]
\end{theorem}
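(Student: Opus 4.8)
The plan is to combine the first-order expansion furnished by Lemmas \ref{myw201} and \ref{myw202} with a weak-compactness argument for the maximizing measures, and then to interchange an infimum and a supremum by Sion's minimax theorem. Throughout, write $\widehat{y}^{\,u}_{\theta}$ for the solution of the variational BSDE \eqref{myw106} associated with the perturbation direction $u-\overline{u}$, and set $\Phi(u,Q)=\int_{\Theta}\widehat{y}^{\,u}_{\theta}(0)\,Q(d\theta)$. Since \eqref{myw101} and \eqref{myw106} are linear with source terms linear in $u-\overline{u}$ and with coefficients frozen along $(\overline{x}_{\theta},\overline{u})$, the map $u\mapsto\widehat{y}^{\,u}_{\theta}(0)$ is affine; hence $\Phi$ is affine in $u$ and linear in $Q$. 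First I would record the structure of the maximizing set: by Lemma \ref{myw306} the map $\theta\mapsto\overline{y}_{\theta}(0)$ is bounded and continuous, so $Q\mapsto\int_{\Theta}\overline{y}_{\theta}(0)Q(d\theta)$ is weakly continuous, and together with the weak compactness of $\mathcal{Q}$ in (H5) this shows $\mathcal{Q}^{\overline{u}}$ is nonempty; being the maximizer set of a linear functional over a convex weakly compact set, it is also convex and weakly compact.

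Next I would derive, for each fixed $u\in\mathcal{U}[0,T]$, the pointwise inequality $\sup_{Q\in\mathcal{Q}^{\overline{u}}}\Phi(u,Q)\ge 0$. Using $u^{\rho}=\overline{u}+\rho(u-\overline{u})$, the identity $y_{\theta}^{\rho}(0)=\overline{y}_{\theta}(0)+\rho\widehat{y}^{\,u}_{\theta}(0)+\rho\widetilde{y}^{\rho}_{\theta}(0)$, and $\varepsilon_{\rho}:=\sup_{\theta}|\widetilde{y}^{\rho}_{\theta}(0)|\to 0$ (which holds by the uniform estimate in Lemma \ref{myw202}(ii)), optimality of $\overline{u}$ gives, for any maximizer $Q^{\rho}\in\mathcal{Q}^{u^{\rho}}$ (which exists by Lemma \ref{myw306} applied to $u^{\rho}$ and weak compactness),
\[
0\le J(u^{\rho})-J(\overline{u})\le\int_{\Theta}\big(y_{\theta}^{\rho}(0)-\overline{y}_{\theta}(0)\big)Q^{\rho}(d\theta)\le\rho\,\Phi(u,Q^{\rho})+\rho\varepsilon_{\rho},
\]
where I used $J(u^{\rho})=\int y^{\rho}_{\theta}(0)Q^{\rho}(d\theta)$ and $J(\overline{u})\ge\int\overline{y}_{\theta}(0)Q^{\rho}(d\theta)$. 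Dividing by $\rho>0$ yields $\Phi(u,Q^{\rho})\ge-\varepsilon_{\rho}$.

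I would then extract a weakly convergent subsequence $Q^{\rho_k}\rightharpoonup Q^{u}$ and identify the limit as an element of $\mathcal{Q}^{\overline{u}}$: since $y^{\rho_k}_{\theta}(0)\to\overline{y}_{\theta}(0)$ uniformly in $\theta$ and $Q\mapsto\int\overline{y}_{\theta}(0)Q(d\theta)$ is weakly continuous, passing to the limit in $\int y^{\rho_k}_{\theta}(0)Q^{\rho_k}(d\theta)=J(u^{\rho_k})\to J(\overline{u})$ gives $\int\overline{y}_{\theta}(0)Q^{u}(d\theta)=J(\overline{u})$, i.e.\ $Q^{u}\in\mathcal{Q}^{\overline{u}}$. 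Passing to the limit in $\Phi(u,Q^{\rho_k})\ge-\varepsilon_{\rho_k}$, which requires $\theta\mapsto\widehat{y}^{\,u}_{\theta}(0)$ to be bounded and continuous (the regularity supplied by Lemma \ref{myw208}, the analogue of Lemma \ref{myw306} for the variational system), gives $\Phi(u,Q^{u})\ge0$, hence $\sup_{Q\in\mathcal{Q}^{\overline{u}}}\Phi(u,Q)\ge0$ for every $u$, and therefore $\inf_{u}\sup_{Q\in\mathcal{Q}^{\overline{u}}}\Phi(u,Q)\ge0$. Finally I would apply Sion's minimax theorem to the convex set $\mathcal{U}[0,T]$ and the weakly compact convex set $\mathcal{Q}^{\overline{u}}$: as $\Phi(\cdot,Q)$ is affine and continuous on $\mathcal{U}[0,T]$ and $\Phi(u,\cdot)$ is linear and weakly continuous on $\mathcal{Q}^{\overline{u}}$, the infimum and supremum may be interchanged, so $\sup_{Q\in\mathcal{Q}^{\overline{u}}}\inf_{u}\Phi(u,Q)\ge0$; and since $Q\mapsto\inf_{u}\Phi(u,Q)$ is an infimum of weakly continuous functions it is weakly upper semicontinuous, hence attains its maximum on the compact set $\mathcal{Q}^{\overline{u}}$ at some $\overline{Q}$, which is the desired measure.

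The main obstacle is the uniformity in $\theta$: the error $\varepsilon_{\rho}$ must vanish uniformly over the entire parameter space $\Theta$, which is precisely what the $\sup_{\theta}$ estimates of Lemmas \ref{myw201} and \ref{myw202} deliver, and one must also secure the boundedness and continuity of $\theta\mapsto\widehat{y}^{\,u}_{\theta}(0)$ so that every $Q$-integral is weakly continuous and the minimax hypotheses are met (Lemma \ref{myw208}). Once these regularity and uniformity facts are in hand, verifying the quasiconvexity/quasiconcavity and semicontinuity required by Sion's theorem is routine, thanks to the affine-linear structure of $\Phi$.
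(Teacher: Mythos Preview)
Your proposal is correct and follows essentially the same route as the paper: derive $\sup_{Q\in\mathcal{Q}^{\overline u}}\Phi(u,Q)\ge 0$ for each $u$ via the uniform expansion of Lemma~\ref{myw202} and a weak-compactness extraction (this is the content of the paper's Lemma~\ref{myw208}), then interchange $\inf$ and $\sup$ by Sion's theorem using the affine structure, and finally realize the supremum by compactness. One minor mislabeling: the continuity and boundedness of $\theta\mapsto\widehat{y}^{\,u}_{\theta}(0)$ that you need is not Lemma~\ref{myw208} but Lemma~\ref{myw2091} in Appendix~B; Lemma~\ref{myw208} is the directional-derivative computation itself.
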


In order to prove Theorem \ref{myw210}, we need the following lemmas.

\begin{lemma}\label{myw206}
Assume \emph{(H1)}, \emph{(H4)} and  \emph{(H5)} are satisfied. Then, the set $\mathcal{Q}^u$ is non-empty for each $u\in\mathcal{U}[0,T]$.
\end{lemma}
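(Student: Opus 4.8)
The plan is to show that the supremum defining $J(u)$ is actually attained, which is exactly the assertion that $\mathcal{Q}^u\neq\emptyset$. The whole argument is a standard "continuous functional on a compact set attains its maximum" statement, once the right topology and the right continuity are identified. Concretely, I would work with the weak topology on probability measures on $(\Theta,\mathcal{B}(\Theta))$, and I would regard the map
\[
\Phi(Q):=\int_{\Theta}y_{\theta}(0)\,Q(d\theta),\qquad Q\in\mathcal{Q},
\]
as the object whose maximizer we seek.

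First I would invoke Lemma \ref{myw306}, which under (H1) and (H4) guarantees that the function $g(\theta):=y_{\theta}(0)$ is both continuous and bounded on $\Theta$. This is the key input: by the very definition of weak convergence of probability measures, if $Q_{n}\to Q$ weakly then $\int_{\Theta}g\,dQ_{n}\to\int_{\Theta}g\,dQ$ for every bounded continuous $g$. Hence $\Phi$ is continuous on $\mathcal{Q}$ with respect to the weak topology. (Boundedness of $g$ also ensures $\Phi(Q)$ is finite for every $Q$, so $J(u)=\sup_{Q\in\mathcal{Q}}\Phi(Q)$ is a finite real number.)

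Next I would combine this continuity with the weak compactness of $\mathcal{Q}$ from (H5). Take a maximizing sequence $(Q_{n})_{n\geq1}\subset\mathcal{Q}$ with $\Phi(Q_{n})\to J(u)$. By weak compactness there is a subsequence $(Q_{n_{k}})$ converging weakly to some $\overline{Q}\in\mathcal{Q}$. Weak continuity of $\Phi$ then gives $\Phi(Q_{n_{k}})\to\Phi(\overline{Q})$, while along the whole sequence $\Phi(Q_{n_{k}})\to J(u)$; therefore $\Phi(\overline{Q})=J(u)$, i.e. $\overline{Q}\in\mathcal{Q}^{u}$. This proves the lemma. Note that the convexity part of (H5) is not needed here—compactness and continuity alone suffice—though it will of course be used later in Theorem \ref{myw210}.

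I do not expect a genuine obstacle in this statement: all the real work sits upstream, in establishing that $\theta\mapsto y_{\theta}(0)$ is continuous and bounded (Lemma \ref{myw306}, resting on the uniform-in-$\theta$ estimates and the modulus-of-continuity assumption (H4)). The only point requiring a line of care is confirming that $\Phi$ is sequentially weakly continuous rather than merely upper semicontinuous; this is where the \emph{boundedness} of $g$, and not just its continuity, is essential, since weak convergence tests precisely against bounded continuous functions.
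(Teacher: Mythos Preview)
Your proposal is correct and follows essentially the same approach as the paper: a maximizing sequence, weak compactness of $\mathcal{Q}$ to extract a weakly convergent subsequence, and the continuity plus boundedness of $\theta\mapsto y_{\theta}(0)$ from Lemma~\ref{myw306} to pass to the limit in the integral. Your added remark that the convexity part of (H5) is not needed here is accurate.
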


\begin{proof}
By the definition $J({u})$, there exists a sequence $Q^N\in\mathcal{Q}$ so that
\[
J({u})-\frac{1}{N}\leq \int_{\Theta}{y}_{\theta}(0)Q^N(d\theta)\leq J({u}).
\]
Note that $\mathcal{Q}$ is weakly compact. Then, choosing a subsequence if necessary, we could find a   ${Q}^u\in\mathcal{Q}$ such that $Q^N$ converges weakly  to ${Q}^u$. From   Lemma \ref{myw306}, the function $\theta\rightarrow{y}_{\theta}(0)$ is continuous and bounded. It follows that
\[
J({u})\geq \int_{\Theta} {y}_{\theta}(0){Q}^u(d\theta)=\lim\limits_{N\rightarrow\infty}\int_{\Theta}{y}_{\theta}(0)Q^N(d\theta)\geq J({u}),
\]
which ends the proof.
\end{proof}

\begin{lemma} \label{myw208} Assume that the conditions \emph{(H1)}-\emph{(H5)}  hold. Then, for each $u\in\mathcal{U}[0,T]$, there exists a probability $\overline{Q}\in\mathcal{Q}^{\overline{u}}$ so that
\[
\lim\limits_{\rho\rightarrow 0}\frac{J(u^{\rho})-J(\overline{u})}{\rho}= \sup\limits_{Q\in\mathcal{Q}^{\overline{u}}} \int_{\Theta} \widehat{y}_{\theta}(0) Q(d\theta)=\int_{\Theta} \widehat{y}_{\theta}(0) \overline{Q}(d\theta).
\]
\end{lemma}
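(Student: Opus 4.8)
The plan is to recognize the statement as a Danskin/envelope-type differentiation of the value $J(u^\rho)=\sup_{Q\in\mathcal Q}\int_\Theta y^\rho_\theta(0)\,Q(d\theta)$ and to extract the derivative by a weak compactness argument. The first ingredient is the consequence of Lemma \ref{myw202}(ii) at $t=0$: since the filtration at time $0$ is trivial, $y^\rho_\theta(0)$, $\overline y_\theta(0)$ and $\widehat y_\theta(0)$ are deterministic, so $\widetilde y^\rho_\theta(0)$ is deterministic and
\[
\varepsilon_\rho:=\sup_{\theta\in\Theta}\left|\frac{y^\rho_\theta(0)-\overline y_\theta(0)}{\rho}-\widehat y_\theta(0)\right|^2\leq\sup_{\theta\in\Theta}\mathbb{E}\Big[\sup_{0\le t\le T}|\widetilde y^\rho_\theta(t)|^2\Big]\longrightarrow 0 .
\]
Setting $h(Q):=\int_\Theta\widehat y_\theta(0)\,Q(d\theta)$ and $g(Q):=\int_\Theta\overline y_\theta(0)\,Q(d\theta)$, I would first note that $\theta\mapsto\widehat y_\theta(0)$ is continuous and bounded: for each fixed $\rho$ the map $\theta\mapsto\rho^{-1}(y^\rho_\theta(0)-\overline y_\theta(0))$ is continuous and bounded by Lemma \ref{myw306} (applied to $u^\rho$ and $\overline u$), and by the display above $\widehat y_\theta(0)$ is its uniform limit, hence inherits these properties. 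Consequently both $g$ and $h$ are continuous for the weak topology on $\mathcal Q$; moreover $\sup_{Q\in\mathcal Q}g(Q)=M:=J(\overline u)$ and, by definition of $\mathcal Q^{\overline u}$, one has $\mathcal Q^{\overline u}=\{Q\in\mathcal Q:g(Q)=M\}$, which is non-empty by Lemma \ref{myw206}.

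Writing $g_\rho(Q):=\int_\Theta y^\rho_\theta(0)\,Q(d\theta)=g(Q)+\rho\,h_\rho(Q)$ with $h_\rho(Q):=\rho^{-1}\int_\Theta(y^\rho_\theta(0)-\overline y_\theta(0))\,Q(d\theta)$, the display gives $\sup_{Q\in\mathcal Q}|h_\rho(Q)-h(Q)|\le\sqrt{\varepsilon_\rho}$. For the lower bound, testing $J(u^\rho)=\sup_Q g_\rho(Q)$ against any $Q^\ast\in\mathcal Q^{\overline u}$ yields $J(u^\rho)\ge g(Q^\ast)+\rho h_\rho(Q^\ast)\ge M+\rho\big(h(Q^\ast)-\sqrt{\varepsilon_\rho}\big)$, whence $\liminf_{\rho\to0}\rho^{-1}(J(u^\rho)-M)\ge\sup_{Q\in\mathcal Q^{\overline u}}h(Q)$. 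For the upper bound, by (H5) together with the weak continuity of $g_\rho$ the supremum $J(u^\rho)$ is attained at some $Q_\rho\in\mathcal Q$, and since $g(Q_\rho)\le M$,
\[
\frac{J(u^\rho)-M}{\rho}=\frac{g(Q_\rho)-M}{\rho}+h_\rho(Q_\rho)\le h(Q_\rho)+\sqrt{\varepsilon_\rho}.
\]

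The heart of the argument is to show that every weak limit point of $(Q_\rho)$ lies in $\mathcal Q^{\overline u}$. Along a sequence $\rho_n\to0$ realizing $\limsup_{\rho\to0}\rho^{-1}(J(u^\rho)-M)$, weak compactness (H5) extracts $Q_{\rho_n}\to Q_0$ weakly. Combining $J(u^{\rho_n})\ge M-\rho_n\big(\|h\|_\infty+\sqrt{\varepsilon_{\rho_n}}\big)$ with $J(u^{\rho_n})=g(Q_{\rho_n})+\rho_n h_{\rho_n}(Q_{\rho_n})\le g(Q_{\rho_n})+\rho_n\big(\|h\|_\infty+\sqrt{\varepsilon_{\rho_n}}\big)$ gives $g(Q_{\rho_n})\ge M-C\rho_n$, so the weak continuity of $g$ forces $g(Q_0)\ge M$, i.e. $Q_0\in\mathcal Q^{\overline u}$. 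Passing to the limit in the displayed inequality and using weak continuity of $h$ yields $\limsup_{\rho\to0}\rho^{-1}(J(u^\rho)-M)\le h(Q_0)\le\sup_{Q\in\mathcal Q^{\overline u}}h(Q)$. Together with the lower bound this shows the limit exists and equals $\sup_{Q\in\mathcal Q^{\overline u}}h(Q)$. Finally, $\mathcal Q^{\overline u}=\{g=M\}$ is weakly closed, hence weakly compact by (H5), and $h$ is weakly continuous, so the supremum is attained at some $\overline Q\in\mathcal Q^{\overline u}$, giving the last equality.

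I would flag the upper bound — precisely the identification of the weak limit point $Q_0$ as an element of the argmax set $\mathcal Q^{\overline u}$ — as the main obstacle, since it is exactly where (H5) and the weak continuity of both $g$ and $h$ (which in turn rests on the uniform-in-$\theta$ convergence of Lemma \ref{myw202} and the continuity of $\theta\mapsto\widehat y_\theta(0)$) are indispensable; without compactness the supremum defining $J(u^\rho)$ need not be attained and the envelope argument would collapse.
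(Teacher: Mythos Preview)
Your proof is correct and follows essentially the same envelope/Danskin-type argument as the paper: the lower bound by testing against $Q\in\mathcal Q^{\overline u}$, the upper bound by choosing maximizers $Q_\rho$, extracting a weak limit via (H5), and showing that this limit lies in $\mathcal Q^{\overline u}$. The one noteworthy variation is your justification of the continuity and boundedness of $\theta\mapsto\widehat y_\theta(0)$: the paper appeals to a separate lemma (Lemma~\ref{myw2091}) establishing this directly from (H4) via estimates on the variational FBSDE, whereas you obtain it as the uniform-in-$\theta$ limit of the continuous bounded maps $\theta\mapsto\rho^{-1}(y^\rho_\theta(0)-\overline y_\theta(0))$ furnished by Lemma~\ref{myw306}; this is a nice shortcut that avoids reproving regularity for the variational system.
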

\begin{proof} The proof is divided into the following two steps.

{\bf Step 1 (The convergence).}
For each ${Q}\in\mathcal{Q}^{\overline{u}}$, we have
\[
J(u^{\rho})\geq \int_{\Theta} y^{\rho}_{\theta}(0) {Q}(d\theta)\ \text{and} \ J(\overline{u})=\int_{\Theta} \overline{y}_{\theta}(0){Q}(d\theta),
\]
which implies that for each $\rho\in (0,1)$,
\begin{align*}
\frac{J(u^{\rho})-J(\overline{u})}{\rho}\geq \int_{\Theta} \frac{y^{\rho}_{\theta}(0)-\overline{y}_{\theta}(0)}{\rho} {Q}(d\theta)=
\int_{\Theta} (\widetilde{y}_{\theta}^{\rho}(0)+\widehat{y}_{\theta}(0)) {Q}(d\theta).
\end{align*}
On the other hand, by Lemma \ref{myw202}, we derive that
\[
\lim\limits_{\rho\rightarrow 0}\sup\limits_{\theta\in\Theta}|\widetilde{y}_{\theta}^{\rho}(0)|=0.
\] It follows that
\[
\liminf\limits_{\rho\rightarrow 0}\frac{J(u^{\rho})-J(\overline{u})}{\rho}\geq \int_{\Theta} \widehat{y}_{\theta}(0) {Q}(d\theta).
\]
As a result, we get that
\begin{align}\label{myw401}
\liminf\limits_{\rho\rightarrow 0}\frac{J(u^{\rho})-J(\overline{u})}{\rho}\geq \sup\limits_{Q\in\mathcal{Q}^{\overline{u}}} \int_{\Theta} \widehat{y}_{\theta}(0) Q(d\theta).
\end{align}

On the other hand, choosing a subsequence $\rho_N\rightarrow 0$ such that
\[
\limsup\limits_{\rho\rightarrow 0}\frac{J(u^{\rho})-J(\overline{u})}{\rho}=\lim\limits_{N\rightarrow \infty}\frac{J(u^{\rho_N})-J(\overline{u})}{\rho}.
\]
For each $N\geq 1$, recalling Lemma \ref{myw206}, we could find a probability $Q^{\rho_N}\in\mathcal{Q}^{{u}^{\rho_N}}$ so that
\[
J(u^{\rho_N})=\int_{\Theta} y^{\rho_N}_{\theta}(0) Q^{\rho_N}(d\theta)\ \text{and} \ J(\overline{u})\geq \int_{\Theta} \overline{y}_{\theta}(0) Q^{\rho_N}(d\theta),
\]
which indicates that
\begin{align*}
\frac{J(u^{\rho_N})-J(\overline{u})}{\rho}\leq \int_{\Theta} \frac{y^{\rho_N}_{\theta}(0)-\overline{y}_{\theta}(0)}{\rho} Q^{\rho_N}(d\theta)=
\int_{\Theta} (\widetilde{y}_{\theta}^{\rho_N}(0)+\widehat{y}_{\theta}(0)) Q^{\rho_N}(d\theta).
\end{align*}
 Choosing a subsequence if necessary,  there is a ${Q}^*\in\mathcal{Q}$ such that $(Q^{\rho_N})_{N\geq 1}$  converges weakly  to ${Q}^*$.
With the help of   Lemma \ref{myw2091} in appendix, the function
$\theta\rightarrow\widehat{y}_{\theta}(0)$ is continuous and bounded.
  Thus, by  Lemma \ref{myw202} and the property of weak convergence, we derive that
\begin{align}\label{myw402}
\limsup\limits_{\rho\rightarrow 0}\frac{J(u^{\rho})-J(\overline{u})}{\rho} \leq \lim\limits_{N\rightarrow \infty}\sup\limits_{\theta\in\Theta}|\widetilde{y}_{\theta}^{\rho_N}(0)|+\lim\limits_{N\rightarrow \infty}\int_{\Theta}\widehat{y}_{\theta}(0) Q^{\rho_N}(d\theta)=
\int_{\Theta} \widehat{y}_{\theta}(0) Q^{*}(d\theta).\end{align}
We claim that $Q^*\in\mathcal{Q}^{\overline{u}}$, which will be established in step 2.
Consequently, putting inequalities \eqref{myw401} and \eqref{myw402} together, we deduce that
\[
\lim\limits_{\rho\rightarrow 0}\frac{J(u^{\rho})-J(\overline{u})}{\rho}= \int_{\Theta} \widehat{y}_{\theta}(0) Q^{*}(d\theta)=\sup\limits_{Q\in\mathcal{Q}^{\overline{u}}} \int_{\Theta} \widehat{y}_{\theta}(0) Q(d\theta),
\]
which is the desired result.

{\bf Step 2 (The claim that $Q^*\in\mathcal{Q}^{\overline{u}}$).} Note that $y^{\rho}_{\theta}(t)-\overline{y}_{\theta}(t)=\rho(\widetilde{y}_{\theta}^{\rho}(t)+\widehat{y}_{\theta}(t))$. Then, with the help of inequalities \eqref{myw107} and \eqref{myw203},  we obtain that
\begin{align*}
\mathbb{E}\left[\sup\limits_{t\in[0,T]}|y^{\rho}_{\theta}(t)-\overline{y}_{\theta}(t)|^2\right]\leq C(L,T,x_0)\bigg(1+\mathbb{E}\bigg[\int^T_0(|u(t)|^4+|\overline{u}(t)|^4)dt\bigg]\bigg)\rho^2,
\end{align*}
which implies that
$
\lim\limits_{\rho\rightarrow 0}\sup\limits_{\theta\in\Theta}|y^{\rho}_{\theta}(0)-\overline{y}_{\theta}(0)|=0.
$
It follows from the definition of $J(u)$ that
\begin{align*}
&\lim\limits_{N\rightarrow \infty}|J(u^{\rho_N})-J(\overline{u})|\leq \lim\limits_{N\rightarrow \infty}\sup\limits_{\theta\in\Theta}|y^{\rho_N}_{\theta}(0)-\overline{y}_{\theta}(0)|=0,
\\
&\lim\limits_{N\rightarrow \infty}\int_{\Theta}|y^{\rho_N}_{\theta}(0)-\overline{y}_{\theta}(0)|Q^{\rho_N}(d\theta) \leq\lim\limits_{N\rightarrow \infty} \sup\limits_{\theta\in\Theta}|y^{\rho_N}_{\theta}(0)-\overline{y}_{\theta}(0)|=0.
\end{align*}
Consequently, we have that
\[
J(\overline{u})=\lim\limits_{N\rightarrow \infty}J(u^{\rho_N})=\lim\limits_{N\rightarrow \infty}\int_{\Theta}y^{\rho_N}_{\theta}(0)Q^{\rho_N}(d\theta)
=\lim\limits_{N\rightarrow \infty}\int_{\Theta}\overline{y}_{\theta}(0)Q^{\rho_N}(d\theta)=\int_{\Theta}\overline{y}_{\theta}(0)Q^{*}(d\theta),
\]
which completes the proof.
\end{proof}

Now, we are ready to complete the proof of Theorem \ref{myw210}.

\begin{proof}[Proof of  Theorem \ref{myw210}]
Denote by $\widehat{y}_{\theta}^{u}(0)$ the solution to  variational BSDE  \eqref{myw106} corresponding to the admissible control $u\in\mathcal{U}[0,T]$.
By Lemma \ref{myw208}, we obtain that,
\[
\lim\limits_{\rho\rightarrow 0}\frac{J(u^{\rho})-J(\overline{u})}{\rho}= \sup\limits_{Q\in\mathcal{Q}^{\overline{u}}} \int_{\Theta} \widehat{y}^{u}_{\theta}(0) Q(d\theta)\geq 0,
\]
which implies that
\[
\inf\limits_{u\in\mathcal{U}[0,T]}\sup\limits_{Q\in\mathcal{Q}^{\overline{u}}} \int_{\Theta} \widehat{y}^{u}_{\theta}(0) Q(d\theta)\geq 0.
\]

In the spirit of the fact that $\mathcal{Q}$ is  convex and weakly compact,  the subset $\mathcal{Q}^{\overline{u}}$ is also convex and weakly compact.
For each $\lambda\in[0,1], u,u^{\prime}\in\mathcal{U}[0,T]$, one can check that
\[
\widehat{y}^{\lambda u+(1-\lambda)u^{\prime}}_{\theta}(0)=\lambda \widehat{y}^{u}_{\theta}(0)+(1-\lambda)\widehat{y}^{u^{\prime}}_{\theta}(0).
\]
Moreover, with the help of  Lemma \ref{myq2}  in appendix  A, a direct computation yields that
\[
| \widehat{y}^{u}_{\theta}(0)- \widehat{y}^{u^{\prime}}_{\theta}(0)|\leq C(L,x_0,\overline{u})\mathbb{E}\left[\int^T_0|u(t)-u^{\prime}(t)|^4dt\right]^{\frac{1}{2}},
\]
from which we deduce that
$
u\rightarrow \int_{\Theta} \widehat{y}^{u}_{\theta}(0) Q(d\theta)\ \text{is continuous}.
$
It follows from Sion's minimax theorem that
\[
\inf\limits_{u\in\mathcal{U}[0,T]}\sup\limits_{Q\in\mathcal{Q}^{\overline{u}}} \int_{\Theta} \widehat{y}^{u}_{\theta}(0) Q(d\theta)=\sup\limits_{Q\in\mathcal{Q}^{\overline{u}}}\inf\limits_{u\in\mathcal{U}[0,T]} \int_{\Theta} \widehat{y}^{u}_{\theta}(0) Q(d\theta)\geq 0.
\]

For each $\varepsilon>0$, we can find a probability $Q^{\varepsilon} \in\mathcal{Q}^{\overline{u}}$ so that
\[
\inf\limits_{u\in\mathcal{U}[0,T]} \int_{\Theta} \widehat{y}^{u}_{\theta}(0) Q^{\varepsilon} (d\theta)\geq -\varepsilon.
\]
Since $\mathcal{Q}^{\overline{u}}$ is compact, there exists a subsequence $\varepsilon_n\rightarrow 0$ such that
$Q^{\varepsilon_n}$ converges weakly to some $\overline{Q}\in \mathcal{Q}^{\overline{u}}$. It follows that for each $u\in\mathcal{U}[0,T]$,
\[
\int_{\Theta} \widehat{y}^{u}_{\theta}(0) \overline{Q}(d\theta)=\lim\limits_{\varepsilon_n\rightarrow 0} \int_{\Theta} \widehat{y}^{u}_{\theta}(0) Q^{\varepsilon_n}(d\theta)\geq 0,
\]
which establishes the desired result.
\end{proof}

\begin{remark}
	{\upshape
Note that the Sion's minimax theorem is crucial for deriving the variational inequality with  a common probability $\overline{Q}\in\mathcal{Q}^{\overline{u}}$ for each $u\in\mathcal{U}[0,T]$. 
In order to use it, we assume that  the control domain is convex, and use the convex variation method  to ensure the solution $\widehat{y}^{u}_{\theta}$ of variational BSDE  is  convex in $u$. 
}
\end{remark}

\subsection{Maximum principle}
In this section, we will consider the necessary condition for the optimal control with the help the previous variational SDEs and BSDEs.

First, we will introduce the adjoint equation for the variational BSDE \eqref{myw106}. For this purpose, suppose that the solution to equation \eqref{myw106} satisfies that
\begin{align}\label{myw508}
\widehat{y}_{\theta}(t)=\left\langle p^1_{\theta}(t), \widehat{x}_{\theta}(t)\right\rangle+p_{\theta}^2(t),
\end{align}
where $ (p^1_{\theta}(t), p^2_{\theta}(t))$ is the solution to the following BSDE:
\begin{align}\label{myw509}
\begin{split}
&dp^1_{\theta}(t)=-P^1_{\theta}(t)dt+\sum\limits_{i=1}^dq_{\theta}^{1,i}(t)dW^i(t), \ \ p^1_{\theta}(T)=(\partial_x\varphi_{\theta}(\overline{x}_{\theta}(T)))^{\top},\\
&dp^2_{\theta}(t)=-P^2_{\theta}(t)dt+\sum\limits_{i=1}^dq_{\theta}^{2,i}(t)dW^i(t), \ \ p_{\theta}^2(T)=0.
\end{split}
\end{align}
Here  the functions $P^1_{\theta}$ and $P^2_{\theta}$ will be determined later.

Recalling equation \eqref{myw101} and applying It\^{o}'s formula to $\left\langle p_{\theta}^1(t), \widehat{x}_{\theta}(t)\right\rangle+p_{\theta}^2(t)$ yields that
\begin{align*}
&d\left(\left\langle p_{\theta}^1(t), \widehat{x}_{\theta}(t)\right\rangle+p_{\theta}^2(t)\right)\\
&=\left[\left\langle  p_{\theta}^1(t), \partial_x b_{\theta}(t)\widehat{x}_{\theta}(t)+\partial_u b_{\theta}(t)(u(t)-\overline{u}(t))\right\rangle+\sum\limits_{i=1}^d\left\langle q_{\theta}^{1,i},\partial_x \sigma^i_{\theta}(t)\widehat{x}_{\theta}(t)+\partial_u \sigma^i_{\theta}(t)(u(t)-\overline{u}(t))\right\rangle\right] dt\\
&\ \ \ \ +\sum\limits_{i=1}^d\left[
\left\langle p_{\theta}^1(t),\partial_x \sigma^i_{\theta}(t)\widehat{x}_{\theta}(t)+\partial_u \sigma^i_{\theta}(t)(u(t)-\overline{u}(t))\right\rangle+\left\langle q_{\theta}^{1,i},\widehat{x}_{\theta}(t)\right\rangle +q_{\theta}^{2,i}
\right]dW^i(t)\\
&\ \ \ \ -\left[\left\langle P_{\theta}^1(t),\widehat{x}_{\theta}(t)\right\rangle+P_{\theta}^2(t)\right] dt
\end{align*}
According to equation \eqref{myw106}, we get that
\begin{align*}
&\left\langle P_{\theta}^1(t),\widehat{x}_{\theta}(t)\right\rangle+P_{\theta}^2(t)=\partial_xf_{\theta}(t)\widehat{x}_{\theta}(t)+\partial_yf_{\theta}(t)\widehat{y}_{\theta}(t)+\sum\limits_{i=1}^d\partial_{z^i}f_{\theta}(t)\widehat{z}_{\theta}^i(t)+\partial_uf_{\theta}(t)(u(t)-\overline{u}(t))\\
&\ \ \ \ \  +\left\langle  p_{\theta}^1(t), \partial_x b_{\theta}(t)\widehat{x}_{\theta}(t)+\partial_u b_{\theta}(t)(u(t)-\overline{u}(t))\right\rangle+\sum\limits_{i=1}^d\left\langle q_{\theta}^{1,i},\partial_x \sigma^i_{\theta}(t)\widehat{x}_{\theta}(t)+\partial_u \sigma^i_{\theta}(t)(u(t)-\overline{u}(t))\right\rangle,\\
&\widehat{z}_{\theta}^i(t)=\left\langle p_{\theta}^1(t),\partial_x \sigma^i_{\theta}(t)\widehat{x}_{\theta}(t)+\partial_u \sigma^i_{\theta}(t)(u(t)-\overline{u}(t))\right\rangle+\left\langle q_{\theta}^{1,i},\widehat{x}_{\theta}(t)\right\rangle +q_{\theta}^{2,i},
\end{align*}
which together with equation \eqref{myw508} implies that
\begin{align*}
&P^1_{\theta}(t)=(\partial_xf_{\theta}(t))^{\top}+\left((\partial_x b_{\theta}(t))^{\top}+\partial_yf_{\theta}(t)+\sum\limits_{i=1}^d\partial_{z^i}f_{\theta}(t)(\partial_x\sigma^i_{\theta}(t))^{\top}\right)p^1_{\theta}(t)\\
&\ \ \ \ \ \ \ \ \ \ \ \ +\sum\limits_{i=1}^d\left(\partial_{z^i}f_{\theta}(t)+(\partial_x\sigma^i_{\theta}(t))^{\top}\right)q_{\theta}^{1,i}(t),\\
&P^2_{\theta}(t)=\left\langle p_{\theta}^1(t),\left(\partial_u b_{\theta}(t)+\sum\limits_{i=1}^d\partial_{z^i}f_{\theta}(t)\partial_u \sigma^i_{\theta}(t)\right)(u(t)-\overline{u}(t))\right\rangle+\sum\limits_{i=1}^d\left\langle q_{\theta}^{1,i}(t),\partial_u \sigma^i_{\theta}(t)(u(t)-\overline{u}(t))\right\rangle\\
&\ \ \ \ \ \ \ \ \ \ \ \ +\partial_uf_{\theta}(t)(u(t)-\overline{u}(t))+\partial_yf_{\theta}(t)p_{\theta}^2(t)+\sum\limits_{i=1}^d\partial_{z^i}f_{\theta}(t)q_{\theta}^{2,i}(t).
\end{align*}

From Lemma \ref{myq2} in appendix A,  the BSDE \eqref{myw509} admits a unique solution\[(p_{\theta}^1,q_{\theta}^1,p_{\theta}^2,q_{\theta}^2)\in \mathcal{S}^p(0,T;\mathbb{R}^n)\times \mathcal{H}^p(0,T;\mathbb{R}^{n\times d})\times \mathcal{S}^{\frac{p}{2}}(0,T;\mathbb{R})\times  \mathcal{H}^{\frac{p}{2}}(0,T;\mathbb{R}^{d}).\] Moreover, it holds that, for each $ q\in (2,p]$,
\begin{align}\label{myw907}
\mathbb{E}\left[\sup_{0\leq t\leq T}|p_{\theta}^1(t)|^q+\left(\int^T_0|q^1_{\theta}(t)|^2dt\right)^{\frac{q}{2}}\right]\leq C(L,T)\mathbb{E}\left[|x_0|^{q}+\int^T_0|\overline{u}(t)|^{q}dt\right].
\end{align}

Thus, we have the following.
\begin{lemma}\label{myw610}
Suppose \emph{(H1)}-\emph{(H4)} hold. Then, it holds that
\[
\widehat{y}_{\theta}(0)=p^2_{\theta}(0).
\]
\end{lemma}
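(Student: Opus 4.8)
The plan is to show that the ansatz \eqref{myw508} genuinely produces a solution of the variational BSDE \eqref{myw106}, and then to invoke uniqueness and evaluate at $t=0$, where the inner-product term vanishes because $\widehat{x}_\theta(0)=0$. Concretely, I would set $Y_\theta(t):=\langle p^1_\theta(t),\widehat{x}_\theta(t)\rangle+p^2_\theta(t)$ and let $Z^i_\theta(t)$ be the $dW^i$-coefficient read off from the It\^o expansion of $\langle p^1_\theta,\widehat{x}_\theta\rangle+p^2_\theta$ already displayed above, that is, the right-hand side of the identity defining $\widehat{z}^i_\theta(t)$ in the derivation preceding this lemma. The whole point is that $P^1_\theta$ and $P^2_\theta$ in \eqref{myw509} were chosen precisely so that this pair reproduces the data of \eqref{myw106}.

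First I would verify that $(Y_\theta,Z_\theta)$ solves \eqref{myw106}. Applying It\^o's formula to $Y_\theta$ via \eqref{myw101} and \eqref{myw509}, the diffusion coefficient is $Z^i_\theta$ by construction, and substituting the explicit expressions for $P^1_\theta$, $P^2_\theta$ makes the drift collapse to $-[\partial_xf_\theta(t)\widehat{x}_\theta(t)+\partial_yf_\theta(t)Y_\theta(t)+\partial_zf_\theta(t)(Z_\theta(t))^{\top}+\partial_uf_\theta(t)(u(t)-\overline{u}(t))]$; this is exactly the reverse of the computation preceding the lemma. For the terminal condition, $Y_\theta(T)=\langle(\partial_x\varphi_\theta(\overline{x}_\theta(T)))^{\top},\widehat{x}_\theta(T)\rangle+0=\partial_x\varphi_\theta(\overline{x}_\theta(T))\widehat{x}_\theta(T)$, which matches \eqref{myw106}. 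Hence $(Y_\theta,Z_\theta)$ is a solution of \eqref{myw106}.

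Next I would check the integrability needed for the uniqueness part of Lemma \ref{myq2}, namely $(Y_\theta,Z_\theta)\in\mathcal{S}^{p/2}(0,T;\mathbb{R})\times\mathcal{H}^{p/2}(0,T;\mathbb{R}^d)$. This follows from the a priori bounds \eqref{myw102} and \eqref{myw907}, which give $\widehat{x}_\theta,p^1_\theta\in\mathcal{S}^p$, $q^1_\theta\in\mathcal{H}^p$, $p^2_\theta\in\mathcal{S}^{p/2}$ and $q^2_\theta\in\mathcal{H}^{p/2}$; H\"older's inequality together with the boundedness of $\partial_x\sigma^i_\theta$ and $\partial_u\sigma^i_\theta$ from (H1) then places $\langle p^1_\theta,\widehat{x}_\theta\rangle$ in $\mathcal{S}^{p/2}$ and each summand of $Z^i_\theta$ in $\mathcal{H}^{p/2}$. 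By uniqueness, $\widehat{y}_\theta(t)=Y_\theta(t)=\langle p^1_\theta(t),\widehat{x}_\theta(t)\rangle+p^2_\theta(t)$ for all $t\in[0,T]$, which validates \eqref{myw508}; evaluating at $t=0$ and using $\widehat{x}_\theta(0)=0$ from \eqref{myw101} yields $\widehat{y}_\theta(0)=\langle p^1_\theta(0),0\rangle+p^2_\theta(0)=p^2_\theta(0)$. Since the It\^o expansion and the driver matching are already carried out in the text, the only genuine work is the integrability bookkeeping that licenses the appeal to uniqueness, which I expect to be the single mildly delicate point; everything else is a bare identification of terms.
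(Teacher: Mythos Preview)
Your proposal is correct and follows the same approach as the paper: the text preceding the lemma carries out the It\^o expansion and driver matching heuristically to identify $P^1_\theta,P^2_\theta$, and the lemma is then stated without further proof, relying implicitly on exactly the verification-plus-uniqueness argument you spell out. Your explicit integrability bookkeeping to justify the appeal to Lemma~\ref{myq2} is a welcome addition that the paper leaves to the reader.
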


Next, consider the following SDE:
\begin{align}\label{myw909}
dm_{\theta}(t)=\partial_yf_{\theta}(t)m_{\theta}(t)dt+\partial_{z}f_{\theta}(t)m_{\theta}(t)dW(t), \ \ m_{\theta}(0)=1.
\end{align}
Applying It\^{o}'s formula to $p^2_{\theta}(t)m_{\theta}(t)$ yields that
\begin{align*}
p^2_{\theta}(0)=\mathbb{E}\left[ \int^T_0\left\langle m_{\theta}(t)\partial_uH_{\theta}(t,\overline{x}_{\theta}(t),\overline{y}_{\theta}(t),\overline{z}_{\theta}(t),\overline{u}(t),\overline{u}(t),p^1_{\theta}(t),q^1_{\theta}(t)), u(t)-\overline{u}(t)\right\rangle dt
\right],
\end{align*}
where the Hamiltonian $H_{\theta}:[0,T]\times\mathbb{R}^n\times\mathbb{R}\times\mathbb{R}^d\times U\times U\times\mathbb{R}^n\times\mathbb{R}^{n\times d}\rightarrow \mathbb{R}$ is defined by
\begin{align*}
H_{\theta}(t,x,y,z,u,u^{\prime},p,q)=&\left\langle p, b_{\theta}(t,x,u)+\sum\limits_{i=1}^d\partial_{z^i}f_{\theta}(t,x,y,z,u^{\prime})\sigma^i_{\theta}(t,x,u)\right\rangle+\sum\limits_{i=1}^d\left\langle q^{i}, \sigma^i_{\theta}(t,x,u)\right\rangle \\
&\ \ +f_{\theta}(t,x,y,z,u).
\end{align*}
Recalling Theorem \ref{myw210} and Lemma \ref{myw610}, we conclude that for each $u\in\mathcal{U}[0,T]$,
\begin{align}\label{myw2094}
\int_{\Theta} \mathbb{E}\left[ \int^T_0\left\langle m_{\theta}(t)\partial_uH_{\theta}(t,\overline{x}_{\theta}(t),\overline{y}_{\theta}(t),\overline{z}_{\theta}(t),\overline{u}(t),\overline{u}(t),p^1_{\theta}(t),q^1_{\theta}(t)), u(t)-\overline{u}(t)\right\rangle dt
\right]\overline{Q}(d\theta)\geq 0.
\end{align}
Then, in order to derive a maximum principle, we need to study the measurability of the above integrand with respect to the argument $\theta$.

\begin{lemma}\label{myw20941}
Assume that \emph{(H1)}-\emph{(H4)} hold. Then,
the map $(\theta,t,\omega)\rightarrow \Pi_{\theta}(t,\omega)$ is a $\mathscr{F}$-progressively measurable process, i.e.,
for every $t\in[0,T]$, the function $\Pi_{\theta}(s,\omega):\Theta\times[0,t]\times\Omega\rightarrow\mathbb{R}$ is $\mathcal{B}(\Theta)\times\mathcal{B}([0,t])\times\mathscr{F}_t$-measurable, where
\begin{align}\label{myw2093}
\Pi_{\theta}(t)= m_{\theta}(t)\partial_uH_{\theta}(t,\overline{x}_{\theta}(t),\overline{y}_{\theta}(t),\overline{z}_{\theta}(t),\overline{u}(t),\overline{u}(t),p^1_{\theta}(t),q^1_{\theta}(t)).
\end{align}
\end{lemma}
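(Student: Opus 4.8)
The plan is to combine two ingredients: for each fixed $\theta$ the process $\Pi_\theta$ is progressively measurable in $(t,\omega)$, and the map $\theta\mapsto\Pi_\theta$ is continuous from $\Theta$ into $\mathcal{M}^2(0,T;\mathbb{R}^k)$; from these two one manufactures a jointly progressively measurable version. The first point is immediate: $\overline{x}_\theta,\overline{y}_\theta,\overline{z}_\theta$, the adjoint pair $(p^1_\theta,q^1_\theta)$ and $m_\theta$ are all $\mathscr{F}$-progressively measurable, being solutions of the (forward and backward) equations \eqref{App1y}, \eqref{App1y11}, \eqref{myw509}, \eqref{myw909}, while $\overline{u}\in\mathcal{M}^p$ and, by (H2), $\partial_uH_\theta$ is continuous in its spatial arguments; hence the composition $\Pi_\theta(t,\omega)$ in \eqref{myw2093} is progressively measurable for each $\theta$.

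The substantive part is continuity in $\theta$. First I would show that each of the maps $\theta\mapsto\overline{x}_\theta$, $\theta\mapsto(\overline{y}_\theta,\overline{z}_\theta)$, $\theta\mapsto(p^1_\theta,q^1_\theta)$ and $\theta\mapsto m_\theta$ is continuous from $\Theta$ into the corresponding $\mathcal{S}^2$, $\mathcal{S}^2\times\mathcal{H}^2$, etc. Fixing $\theta$ and letting $\theta'\to\theta$, the coefficients differ, by (H4), by at most $\overline{\omega}_N(\mu(\theta,\theta'))$ on the region where all arguments are bounded by $N$; using the uniform-in-$\theta$ $L^p$-estimates of Lemma \ref{myw301} and \eqref{myw907} (with $p>4$) to control the complementary region by a negative power of $N$, exactly as in the truncation arguments of Lemmas \ref{myw201}(ii) and \ref{myw202}(ii), and then invoking the stability estimates of Lemmas \ref{myq1} and \ref{myq2}, one obtains that the differences tend to $0$ as $\theta'\to\theta$ (the same mechanism underlying Lemma \ref{myw306} and Lemma \ref{myw2091}). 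Since $\partial_uH_\theta$ is built from $b_\theta,\sigma_\theta,f_\theta$ and their derivatives, which by (H3)--(H4) are jointly continuous in $(\theta,x,y,z,u)$ and locally Lipschitz in the spatial variables, composing with the now-continuous solution maps and multiplying by $m_\theta$ (the products being handled by H\"older's inequality, the exponent $p>4$ again leaving room) yields that $\theta\mapsto\Pi_\theta$ is continuous from $\Theta$ into $\mathcal{M}^2(0,T;\mathbb{R}^k)$, with $\sup_{\theta\in\Theta}\|\Pi_\theta\|_{\mathcal{M}^2}<\infty$.

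It remains to pass from $\mathcal{M}^2$-continuity in $\theta$ plus progressive measurability in $(t,\omega)$ to joint measurability. Here I would exploit that $\Theta$ is a separable metric space: for each $k$ fix a countable Borel partition $\{A^k_i\}_i$ of $\Theta$ into sets of diameter $\le 1/k$ and pick $\theta^k_i\in A^k_i$. The process $\Pi^k_\theta(t,\omega):=\sum_i \mathbf{1}_{A^k_i}(\theta)\,\Pi_{\theta^k_i}(t,\omega)$ is $\mathcal{B}(\Theta)\times\mathcal{B}([0,t])\times\mathscr{F}_t$-measurable for every $t$, being a countable combination of a $\theta$-indicator with a progressively measurable process. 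By the $\mathcal{M}^2$-continuity and $\mathrm{diam}(A^k_i)\le 1/k$, one has $\Pi^k_\theta\to\Pi_\theta$ in $\mathcal{M}^2$ for every $\theta$; combining this with the uniform bound of Step 2 and dominated convergence gives $\Pi^k\to\Pi$ in $L^2(\overline{Q}(d\theta)\,dt\,d\mathbb{P})$. Extracting a subsequence $k_j$ along which the convergence holds $\overline{Q}(d\theta)\,dt\,d\mathbb{P}$-a.e.\ and setting $\widetilde\Pi:=\limsup_j\Pi^{k_j}$ produces a jointly progressively measurable process coinciding with $\Pi_\theta$ for $\overline{Q}(d\theta)\,dt\,d\mathbb{P}$-almost every $(\theta,t,\omega)$, which is exactly the version needed in \eqref{myw2094}.

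The main obstacle is precisely this last step: because the solution processes are determined only as equivalence classes in $\mathcal{S}^2$/$\mathcal{H}^2$, there is no pointwise-in-$\omega$ continuity of $\theta\mapsto\Pi_\theta(t,\omega)$, so the classical Carath\'eodory joint-measurability criterion cannot be applied directly. The approximation of $\theta$ by simple functions, together with the uniform-in-$\theta$ convergence estimates of Step~2 (themselves the delicate point, requiring the $p>4$ truncation bookkeeping inherited from Lemmas \ref{myw201} and \ref{myw202}), is what makes the construction go through.
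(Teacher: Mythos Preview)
Your proposal is correct and follows the same overall strategy as the paper: show that for each fixed $\theta$ the process $\Pi_\theta$ is progressively measurable, establish continuity of $\theta\mapsto\Pi_\theta$ in an integrated norm, approximate by processes that are simple in $\theta$, and pass to the limit. The paper differs only in implementation details: instead of your countable Borel partition of $\Theta$ into sets of small diameter, it restricts to tight compact sets $K^N\subset\Theta$ with $\overline{Q}(\Theta\setminus K^N)\le 1/N$, covers $K^N$ by finitely many small balls, and uses a continuous partition of unity (this is where local compactness of $\Theta$ is invoked); it works in $L^1(\overline{Q}\times dt\times\mathbb{P})$ rather than your $L^2$; and it isolates the continuity estimate as a separate Lemma~\ref{myw2092} rather than sketching it inline. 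Your Borel-partition route is slightly more elementary in that it needs only separability of $\Theta$, whereas the paper's construction gives approximants continuous in $\theta$; either way one obtains a jointly progressively measurable version agreeing with $\Pi_\theta$ for $\overline{Q}\times dt\times\mathbb{P}$-a.e.\ $(\theta,t,\omega)$, which, as you correctly observe, is all that \eqref{myw2094} requires.
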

\begin{proof}
	Note that  $\Theta$ is a Polish space.
For each $N>1$, choose a compact subset $K^N\subset\Theta$ satisfying that $\overline{Q}(\theta\notin K^N)\leq \frac{1}{N}.$
Then, we  could find a sequence of open neighborhoods $\left(B\left(\theta_l,\frac{1}{2N}\right)\right)_{l=1}^{L_N}$ so that
$K^N\subset\cup_{l=1}^{L_N}B\left(\theta_l,\frac{1}{2N}\right)$. Since   $\Theta$  is locally compact, by partitions of unity, there is a sequence of continuous functions $\eta_l:\Theta\rightarrow\mathbb{R}$ taking value in $[0,1]$ such that
\[
\eta_l(\theta)=0,  \ \text{for $\theta\notin B\left(\theta_l,\frac{1}{2N}\right), l=1,\cdots,L_N$}\ \text{and}\ \sum\limits_{l=1}^{L_N}\eta_l(\theta)=1, \ \text{for $\theta\in K^N$}.
\]
Now, choose some $\theta_l^*$ such that $\eta_l(\theta^*_l)>0$ and
set
\[
\Pi_{\theta}^N(t):=\sum\limits_{l=1}^{L_N}\Pi_{\theta_{l}^*}(t)\eta_l(\theta)I_{\{\theta\in K^N\}}.
\]
It follows from assumption (H1) that
\[
|\Pi_{\theta}(t)|\leq C(L)m_{\theta}(t)\left(1+|p^1_{\theta}(t)|+|q^1_{\theta}(t)|+|\overline{x}_{\theta}(t)|+|\overline{u}(t)|\right), \ \forall \theta\in\Theta,
\]
which together with inequality \eqref{myw907} and Lemma \ref{myw301} implies that
\[
\sup\limits_{\theta\in\Theta}\mathbb{E}\left[\int^T_0|\Pi_{\theta}(t)|dt\right]\leq C(L,T)\mathbb{E}\left[1+|x_0|^{2}+\int^T_0|\overline{u}(t)|^{2}dt\right]^{\frac{1}{2}}.
\]
Note that $\eta_l(\theta)=0$ whenever $\mu(\theta,\theta_l)\geq \frac{1}{2N}$. Therefore, we derive  that
\begin{align*}
&\mathbb{E}\left[\int^T_0\left|\Pi_{\theta}^N(t)-\Pi_{\theta}(t)\right|dt\right]\\
\leq &\sum\limits_{l=1}^{L_N}\mathbb{E}\left[\int^T_0\left|\Pi_{\theta_{l}^*}(t)-\Pi_{\theta}(t)\right|dt\right]\eta_l(\theta)I_{\{\theta\in K^N\}}+\mathbb{E}\left[\int^T_0|\Pi_{\theta}(t)|dt\right]I_{\{\theta\notin K^N\}}\\
\leq& \sup\limits_{\mu(\theta,\theta^{\prime})\leq \frac{1}{N}}\mathbb{E}\left[\int^T_0\left|\Pi_{\theta^{\prime}}(t)-\Pi_{\theta}(t)\right|dt\right]+C(L,T)\mathbb{E}\left[1+|x_0|^{2}+\int^T_0|\overline{u}(t)|^{2}dt\right]^{\frac{1}{2}}I_{\{\theta\notin K^N\}}.
\end{align*}
As a result, we get that
\[
\int_{\Theta}\mathbb{E}\left[\int^T_0\left|\Pi_{\theta}^N(t)-\Pi_{\theta}(t)\right|dt\right]\overline{Q}(d\theta)\leq  \sup\limits_{\mu(\theta,\theta^{\prime})\leq \frac{1}{N}}\mathbb{E}\left[\int^T_0\left|\Pi_{\theta^{\prime}}(t)-\Pi_{\theta}(t)\right|dt\right]+ \frac{C(L,T,x_0,\overline{u})}{N}.
\]
Recalling  Lemma \ref{myw2092} in appendix B, we have that
\[
\lim\limits_{N\rightarrow\infty}\int_{\Theta}\mathbb{E}\left[\int^T_0\left|\Pi_{\theta}^N(t)-\Pi_{\theta}(t)\right|dt\right]\overline{Q}(d\theta)=0.
\]
Since $\Pi_{\theta}^N(t)$ is a $\mathscr{F}$-progressively measurable process,  the desired result holds.
\end{proof}

Finally, applying Lemma \ref{myw20941} and Fubini's theorem to inequality \eqref{myw2094} yields that, for each $u\in\mathcal{U}[0,T]$,
\[
 \mathbb{E}\left[ \int^T_0\int_{\Theta}\left\langle m_{\theta}(t)\partial_uH_{\theta}(t,\overline{x}_{\theta}(t),\overline{y}_{\theta}(t),\overline{z}_{\theta}(t),\overline{u}(t),\overline{u}(t),p^1_{\theta}(t),q^1_{\theta}(t)), u(t)-\overline{u}(t)\right\rangle
\overline{Q}(d\theta)dt\right]\geq 0,
\]
which implies that, for any $u\in U$,
\begin{align}\label{myw2097}
\int_{\Theta}\left\langle m_{\theta}(t)\partial_uH_{\theta}(t,\overline{x}_{\theta}(t),\overline{y}_{\theta}(t),\overline{z}_{\theta}(t),\overline{u}(t),\overline{u}(t),p^1_{\theta}(t),q^1_{\theta}(t)), u-\overline{u}(t)\right\rangle
\overline{Q}(d\theta)\geq 0, \ \text{$dt\times d\mathbb{P}$-a.e.}
\end{align}

Summarizing the above analysis, we could get the main result of the section.
\begin{theorem}\label{myq511}
Suppose assumptions \emph{(H1)}-\emph{(H5)} are satisfied. Let $\overline{u}$ be an optimal control and $(\overline{x}_{\theta},\overline{y}_{\theta},\overline{z}_{\theta})$ be the corresponding trajectory. Then, there exists a reference probability $\overline{Q}\in\mathcal{Q}^{\overline{u}}$
and $(m_{\theta},p^1_{\theta},q^1_{\theta})$ satisfying equations \eqref{myw509}, \eqref{myw909}, such that the inequality \eqref{myw2097} holds.
\end{theorem}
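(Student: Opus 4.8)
The plan is to assemble the ingredients built above into the pointwise variational inequality \eqref{myw2097}. The starting point is the abstract inequality of Theorem \ref{myw210}: there is a reference measure $\overline{Q}\in\mathcal{Q}^{\overline{u}}$ with $\int_\Theta\widehat{y}_\theta(0)\,\overline{Q}(d\theta)\ge 0$ for every $u\in\mathcal{U}[0,T]$. The task is to rewrite $\widehat{y}_\theta(0)$ so that the perturbation $u-\overline{u}$ appears linearly and uniformly in $\theta$, and then to descend from this $\overline{Q}$-integrated inequality to a statement valid $dt\times d\mathbb{P}$-a.e.

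First I would set up the adjoint system. Feeding the ansatz \eqref{myw508} into It\^o's formula together with the variational equations \eqref{myw101} and \eqref{myw106} forces the drivers $P^1_\theta,P^2_\theta$ to take exactly the forms derived above; Lemma \ref{myq2} then makes \eqref{myw509} uniquely solvable with the bound \eqref{myw907}, and Lemma \ref{myw610} yields $\widehat{y}_\theta(0)=p^2_\theta(0)$. I would then introduce the scalar weight $m_\theta$ solving \eqref{myw909}, whose purpose is to absorb the $\partial_y f_\theta$ and $\partial_z f_\theta$ contributions in $p^2_\theta$; applying It\^o's formula to $p^2_\theta(t)m_\theta(t)$ and using $p^2_\theta(T)=0$, $m_\theta(0)=1$ gives $\widehat{y}_\theta(0)=\mathbb{E}\big[\int_0^T\langle\Pi_\theta(t),u(t)-\overline{u}(t)\rangle\,dt\big]$ with $\Pi_\theta$ as in \eqref{myw2093}. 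Together with Theorem \ref{myw210} this is the integrated inequality \eqref{myw2094}.

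The hard part will be interchanging the $\Theta$-integration with the $(t,\omega)$-integration in \eqref{myw2094}, since Fubini's theorem demands that $(\theta,t,\omega)\mapsto\Pi_\theta(t,\omega)$ be jointly measurable, whereas a priori one only controls the dependence on $\theta$ through the continuity estimates flowing from (H4). This is exactly the content of Lemma \ref{myw20941}, which I would invoke: exhausting the locally compact Polish space $\Theta$ by compact sets, forming finite partitions of unity subordinate to small balls, and approximating $\Pi_\theta$ in $L^1(\overline{Q};L^1(dt\times d\mathbb{P}))$ by the progressively measurable processes $\Pi^N_\theta$ delivers joint measurability in the limit. With measurability secured, Fubini turns \eqref{myw2094} into $\mathbb{E}\big[\int_0^T\langle\Psi(t),u(t)-\overline{u}(t)\rangle\,dt\big]\ge 0$ for every $u\in\mathcal{U}[0,T]$, where $\Psi(t):=\int_\Theta\Pi_\theta(t)\,\overline{Q}(d\theta)$.

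Finally I would localize to obtain the pointwise inequality. For a fixed $v\in U$ and an arbitrary progressively measurable set $A\subset[0,T]\times\Omega$, the control $u=\overline{u}+\mathbf{1}_A(v-\overline{u})$ is admissible by convexity of $U$, and testing against it yields $\mathbb{E}[\int_0^T\mathbf{1}_A\langle\Psi(t),v-\overline{u}(t)\rangle\,dt]\ge 0$; since $A$ is arbitrary this forces $\langle\Psi(t),v-\overline{u}(t)\rangle\ge 0$, $dt\times d\mathbb{P}$-a.e. Running $v$ over a countable dense subset of $U$ and using continuity in $v$ then gives the inequality simultaneously for all $v\in U$, which is precisely \eqref{myw2097}. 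Collecting $\overline{Q}$ together with $(m_\theta,p^1_\theta,q^1_\theta)$ completes the argument.
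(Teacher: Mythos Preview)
Your proposal is correct and follows essentially the same route as the paper: invoke Theorem \ref{myw210} for the reference measure $\overline{Q}$, use the ansatz \eqref{myw508} and Lemma \ref{myw610} to identify $\widehat{y}_\theta(0)=p^2_\theta(0)$, apply It\^o's formula to $m_\theta p^2_\theta$ to reach \eqref{myw2094}, appeal to Lemma \ref{myw20941} and Fubini, and finally localize to obtain \eqref{myw2097}. The only addition is that you spell out the last localization step (indicator perturbations and a countable dense subset of $U$), which the paper leaves implicit; this is standard and fine.
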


\subsection{Sufficient condition}

In this section, we will discuss the sufficient condition for the optimal control.
For this purpose, denote
$h_{\theta}:[0,T]\times\mathbb{R}^n\times\mathbb{R}\times\mathbb{R}^d\times U\times\mathbb{R}^n\times\mathbb{R}\times\mathbb{R}^d\times U\times\mathbb{R}^n\times\mathbb{R}^{n\times d}\rightarrow \mathbb{R}$ by
\begin{align*}
h_{\theta}(t,x,y,z,u,x^{\prime},y^{\prime},z^{\prime},u^{\prime},p,q)=&\left\langle p, b_{\theta}(t,x,u)+\sum\limits_{i=1}^d\partial_{z^{\prime,i}}f_{\theta}(t,x^{\prime},y^{\prime},z^{\prime},u^{\prime})\sigma^i_{\theta}(t,x,u)\right\rangle\\
&\ +\sum\limits_{i=1}^d\left\langle q^{i}, \sigma^i_{\theta}(t,x,u)\right\rangle +f_{\theta}(t,x,y,z,u).
\end{align*}
It is obvious that  $H_{\theta}(t,x,y,z,u,u^{\prime},p,q)=h_{\theta}(t,x,y,z,u,x,y,z,u^{\prime},p,q)$.

\begin{theorem}\label{myq512}
Suppose conditions \emph{(H1)}-\emph{(H5)} hold. Assume that the function $h_{\theta}$ is convex with respect to $x,y,z,u$ and $\varphi_{\theta}$ is convex with respect to $x$.
Let $\overline{u}\in\mathcal{U}[0,T]$ and  $\overline{Q}\in\mathcal{Q}^{\overline{u}}$ satisfy that
\[
\int_{\Theta}\left\langle m_{\theta}(t)\partial_uH_{\theta}(t,\overline{x}_{\theta}(t),\overline{y}_{\theta}(t),\overline{z}_{\theta}(t),\overline{u}(t),\overline{u}(t),p^1_{\theta}(t),q^1_{\theta}(t)), u-\overline{u}(t)\right\rangle
\overline{Q}(d\theta)\geq 0, \ \forall u\in U, \ \text{$dt\times d\mathbb{P}$-a.e.,}
\]
where $(\overline{x}_{\theta},\overline{y}_{\theta},\overline{z}_{\theta})$ is the solution to equations \eqref{App1y} and \eqref{App1y11}  corresponding to the admissible control  $\overline{u}$,
and $(m_{\theta},p^1_{\theta},q^1_{\theta})$ satisfying adjoint equations \eqref{myw509}, \eqref{myw909}. Then, the admissible control $\overline{u}$ is an optimal control.
\end{theorem}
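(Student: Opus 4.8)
The plan is to show directly that $J(u)\ge J(\overline{u})$ for every $u\in\mathcal{U}[0,T]$. First I would exploit the reference measure: since $\overline{Q}\in\mathcal{Q}^{\overline{u}}$ we have $J(\overline{u})=\int_\Theta\overline{y}_\theta(0)\overline{Q}(d\theta)$, while $\overline{Q}\in\mathcal{Q}$ gives $J(u)\ge\int_\Theta y_\theta^u(0)\overline{Q}(d\theta)$, where $(x^u_\theta,y^u_\theta,z^u_\theta)$ denotes the trajectory under $u$. Subtracting, it suffices to prove
\[
\int_\Theta\big(y_\theta^u(0)-\overline{y}_\theta(0)\big)\overline{Q}(d\theta)\ge 0.
\]
Throughout write $\delta x_\theta=x_\theta^u-\overline{x}_\theta$, $\delta y_\theta=y_\theta^u-\overline{y}_\theta$, $\delta z_\theta=z_\theta^u-\overline{z}_\theta$, and read all coefficient derivatives along the optimal trajectory as in \eqref{myw509}.

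For fixed $\theta$, I would linearise the recursive cost through the weight $m_\theta$ of \eqref{myw909}, which is strictly positive since it solves a linear SDE. Setting $R_\theta:=\big[f_\theta(t,x^u_\theta,y^u_\theta,z^u_\theta,u)-f_\theta(t,\overline{x}_\theta,\overline{y}_\theta,\overline{z}_\theta,\overline{u})\big]-\partial_yf_\theta(t)\delta y_\theta-\partial_zf_\theta(t)\,\delta z_\theta$, the BSDE for $\delta y_\theta$ has driver $\partial_yf_\theta\,\delta y_\theta+\partial_zf_\theta\,\delta z_\theta+R_\theta$; applying It\^{o}'s formula to $m_\theta\delta y_\theta$, the terms carrying $\partial_yf_\theta$ and $\partial_zf_\theta$ cancel against the drift and diffusion of $m_\theta$, and taking expectations (the stochastic integral being a true martingale by the moment bounds of Lemma \ref{myw301} and \eqref{myw907}) yields
\[
\delta y_\theta(0)=\mathbb{E}\left[m_\theta(T)\big(\varphi_\theta(x^u_\theta(T))-\varphi_\theta(\overline{x}_\theta(T))\big)+\int_0^T m_\theta(t)R_\theta(t)\,dt\right].
\]

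Next I would bring in the first adjoint. By convexity of $\varphi_\theta$ and $m_\theta(T)>0$, the terminal term is bounded below by $\mathbb{E}[m_\theta(T)\langle p^1_\theta(T),\delta x_\theta(T)\rangle]$, recalling $p^1_\theta(T)=(\partial_x\varphi_\theta(\overline{x}_\theta(T)))^{\top}$. I then apply It\^{o}'s formula to $m_\theta\langle p^1_\theta,\delta x_\theta\rangle$, insert the generator $P^1_\theta$ from \eqref{myw509}, and collect terms. Introducing the frozen Hamiltonian $\mathcal{H}_\theta(t,x,y,z,u):=h_\theta(t,x,y,z,u,\overline{x}_\theta(t),\overline{y}_\theta(t),\overline{z}_\theta(t),\overline{u}(t),p^1_\theta(t),q^1_\theta(t))$, the integrand produced by the two It\^{o} expansions, added to $R_\theta$, collapses exactly to
\[
\mathcal{H}_\theta(t,x^u_\theta,y^u_\theta,z^u_\theta,u)-\mathcal{H}_\theta(t,\overline{x}_\theta,\overline{y}_\theta,\overline{z}_\theta,\overline{u})-\partial_x\mathcal{H}_\theta(t)\delta x_\theta-\partial_y\mathcal{H}_\theta(t)\delta y_\theta-\partial_z\mathcal{H}_\theta(t)\,\delta z_\theta,
\]
because $\partial_x\mathcal{H}_\theta$ reproduces precisely the $x$-derivatives of $b_\theta$, $\sigma^i_\theta$ and $f_\theta$ appearing in $P^1_\theta$, while $\partial_y\mathcal{H}_\theta=\partial_yf_\theta$ and $\partial_z\mathcal{H}_\theta=\partial_zf_\theta$. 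Since $h_\theta$ is convex in $(x,y,z,u)$, so is $\mathcal{H}_\theta$ (its primed slots and $(p,q)$ being frozen), and the displayed quantity is $\ge\partial_u\mathcal{H}_\theta(t)(u-\overline{u})=\partial_uH_\theta(t,\overline{x}_\theta,\overline{y}_\theta,\overline{z}_\theta,\overline{u},\overline{u},p^1_\theta,q^1_\theta)(u-\overline{u})$. Hence
\[
\delta y_\theta(0)\ge\mathbb{E}\left[\int_0^T\big\langle m_\theta(t)\partial_uH_\theta(t,\overline{x}_\theta,\overline{y}_\theta,\overline{z}_\theta,\overline{u},\overline{u},p^1_\theta,q^1_\theta),\,u(t)-\overline{u}(t)\big\rangle\,dt\right].
\]

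Finally I would integrate this bound against $\overline{Q}$. By the measurability of $\theta\mapsto\Pi_\theta$ from Lemma \ref{myw20941} together with the uniform integrability $\sup_\theta\mathbb{E}[\int_0^T|\Pi_\theta|\,dt]<\infty$ established there, Fubini's theorem exchanges $\int_\Theta$ with $\mathbb{E}\int_0^T$; a density argument over $u\in U$ (the hypothesis holds for each fixed control value on a common full-measure set, and the pairing is affine hence continuous in $u$) lets me substitute the random value $u(t,\omega)\in U$ into the assumed variational inequality, so $\int_\Theta\langle m_\theta\partial_uH_\theta,\,u-\overline{u}\rangle\overline{Q}(d\theta)\ge 0$ for $dt\times d\mathbb{P}$-a.e. $(t,\omega)$. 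Therefore $\int_\Theta\delta y_\theta(0)\overline{Q}(d\theta)\ge 0$ and $J(u)\ge J(\overline{u})$, proving optimality. The main obstacle is the dual It\^{o} computation in the third step: arranging the cross-variation and drift terms from the two expansions so that they telescope precisely into the first-order Taylor expansion of $\mathcal{H}_\theta$, which is exactly what converts the two convexity inputs (from $\varphi_\theta$ and from $h_\theta$) into the single assumed variational inequality; keeping track of the $\partial_zf_\theta$-weighted $\sigma^i_\theta$ terms, which are the origin of the $\partial_{z'}f_\theta\,\sigma^i_\theta$ coupling in $h_\theta$, is the delicate bookkeeping.
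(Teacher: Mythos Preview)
Your proposal is correct and follows essentially the same route as the paper. The only cosmetic difference is that the paper applies It\^{o}'s formula once to the combined process $\langle m_\theta(t)p^1_\theta(t),\alpha_\theta(t)\rangle-m_\theta(t)\beta_\theta(t)$, whereas you split this into two separate It\^{o} expansions (first for $m_\theta\delta y_\theta$, then for $m_\theta\langle p^1_\theta,\delta x_\theta\rangle$); the resulting identities, convexity bounds, and the final integration against $\overline{Q}$ are identical.
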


\begin{proof}
For each $u\in\mathcal{U}[0,T]$ and $\theta\in\Theta$, let $(x_{\theta},y_{\theta},z_{\theta})$ be the corresponding state
processes of equations \eqref{App1y}-\eqref{App1y11}. Denote  $(\alpha_{\theta},\beta_{\theta},\zeta_{\theta}):=(x_{\theta}-\overline{x}_{\theta},y_{\theta}-\overline{y}_{\theta},z_{\theta}-\overline{z}_{\theta})$. Then, it holds that
\begin{align*}
&\alpha_{\theta}(t)=\int^t_0\left(\partial_xb_{\theta}(s)\alpha_{\theta}(s)+A_{\theta}(s)\right)ds+\sum\limits_{i=1}^d\int^t_0\left(\partial_x\sigma^i_{\theta}(s)\alpha_{\theta}(s)+D^{i}_{\theta}(s)\right)dW^i(s),\\
&\beta_{\theta}(t)=J_{\theta}+\int^T_t\left(\partial_yf_{\theta}(s)\beta_{\theta}(s)+\partial_zf_{\theta}(s)(\zeta_{\theta}(s))^{\top}+L_{\theta}(s) \right)ds-\int^T_t\zeta_{\theta}(s)dW(s),
\end{align*}
where $J_{\theta}=\varphi_{\theta}(x_{\theta}(T))-\varphi_{\theta}(\overline{x}_{\theta}(T))$ and
\begin{align*}
&A_{\theta}(t)=b_{\theta}(t,x_{\theta}(t),u(t))-b_{\theta}(t)-\partial_xb_{\theta}(t)\alpha_{\theta}(t),\ D^{i}_{\theta}(t)=\sigma^i_{\theta}(t,x_{\theta}(t),u(t))-\sigma^i_{\theta}(t)-\partial_x\sigma^i_{\theta}(t)\alpha_{\theta}(t),\\
& L_{\theta}(t)=f_{\theta}(t,x_{\theta}(t),y_{\theta}(t),z_{\theta}(t),u(t))-f_{\theta}(t)
-\partial_yf_{\theta}(t)\beta_{\theta}(t)-\partial_zf_{\theta}(t)(\zeta_{\theta}(t))^{\top}.
\end{align*}
For convenience, set $h_{\theta}(t)=h_{\theta}(t,\overline{x}_{\theta}(t),\overline{y}_{\theta}(t),\overline{z}_{\theta}(t),\overline{u}(t),\overline{x}_{\theta}(t),\overline{y}_{\theta}(t),\overline{z}_{\theta}(t),\overline{u}(t),p^1_{\theta}(t),q^1_{\theta}(t))$.
Then, applying  It\^{o}'s formula to $\left\langle m_{\theta}(t)p_{\theta}^1(t), \alpha_{\theta}(t)\right\rangle- m_{\theta}(t)\beta_{\theta}(t)$ yields that
\begin{align*}
&\mathbb{E}\left[m_{\theta}(T)\partial_x\varphi_{\theta}(\overline{x}(T))\alpha_{\theta}(T)- m_{\theta}(T)J_{\theta}+\beta_{\theta}(0)\right]\\
&=\mathbb{E}\left[\int^T_0m_{\theta}(t)\left[-\partial_xf_{\theta}(t)\alpha_{\theta}(t)+\left\langle p_{\theta}^1(t),A_{\theta}(t)\right\rangle +\sum\limits_{i=1}^d \left\langle \partial_{z^i}f_{\theta}(t)p_{\theta}^1(t)+q^{1,i}_{\theta}(t),D^i_{\theta}(t)\right\rangle +L_{\theta}(t)\right]dt
\right]
\\
&=\mathbb{E}\left[\int^T_0m_{\theta}(t)\left[-\partial_xh_{\theta}(t)\alpha_{\theta}(t)-\partial_yh_{\theta}(t)\beta_{\theta}(t)-\partial_zh_{\theta}(t)(\zeta_{\theta}(t))^{\top}+h^{*}_{\theta}(t)-h_{\theta}(t)\right]dt
\right],
\end{align*}
where $h^*_{\theta}(t)=h_{\theta}(t,{x}_{\theta}(t),{y}_{\theta}(t),{z}_{\theta}(t),{u}(t),\overline{x}_{\theta}(t),\overline{y}_{\theta}(t),\overline{z}_{\theta}(t),\overline{u}(t),p^1_{\theta}(t),q^1_{\theta}(t))$.

Note that $h_{\theta}$ is convex with respect to $x,y,z,u$. Thus, we deduce that
\[
-\partial_xh_{\theta}(t)\alpha_{\theta}(t)-\partial_yh_{\theta}(t)\beta_{\theta}(t)-\partial_zh_{\theta}(t)(\zeta_{\theta}(t))^{\top}+h^{*}_{\theta}(t)-h_{\theta}(t)\geq \partial_uh_{\theta}(t)(u(t)-\overline{u}(t)).
\]
Similarly, we have that $\partial_x\varphi_{\theta}(\overline{x}(T))\alpha_{\theta}(T)\leq J_{\theta}.$
Note that \[\partial_uh_{\theta}(t)=\partial_uH_{\theta}(t,\overline{x}_{\theta}(t),\overline{y}_{\theta}(t),\overline{z}_{\theta}(t),\overline{u}(t),\overline{u}(t),p^1_{\theta}(t),q^1_{\theta}(t))).\]
Thus, it follows that
\begin{align*}
&\beta_{\theta}(0)\geq \mathbb{E}\left[\int^T_0m_{\theta}(t)\langle \partial_uh_{\theta}(t), u(t)-\overline{u}(t)\rangle dt\right],
\end{align*}
which together with inequality \eqref{myw2097} implies that \[
\int_{\Theta}{y}_{\theta}(0)\overline{Q}(d\theta)-\int_{\Theta}\overline{y}_{\theta}(0)\overline{Q}(d\theta)=\int_{\Theta}\beta_{\theta}(0)\overline{Q}(d\theta)\geq 0.\]
Consequently, in spirit of the fact that $\overline{Q}\in\mathcal{Q}^{\overline{u}}$, we could derive that
\[
J(u)-J(\overline{u})\geq \int_{\Theta}{y}_{\theta}(0)\overline{Q}(d\theta)-\int_{\Theta}\overline{y}_{\theta}(0)\overline{Q}(d\theta)\geq 0,
\]
which completes the proof.
\end{proof}

\section{A linear quadratic robust control problem}
For simplicity of presentation, suppose that $d=1$, i.e., the
Brownian motion is one-dimensional.

Assume that $\mathcal{U}[0,T]=\mathcal{M}^p(0,T;\mathbb{R}^k)$ for some constant $p>4$. Suppose that $\Theta=\{1,2\}$ is a discrete space and
\[
\mathcal{Q}=\{Q^{\lambda}:\ \lambda\in[0,1]\},
\]
where $Q^{\lambda}$ is the probability such that $Q^{\lambda}(\{1\})=\lambda$ and $Q^{\lambda}(\{2\})=1-\lambda$.

Consider the following linear quadratic control problem, where the state equation is given by
\begin{align}\label{myq986}
\begin{cases}
&x_{\theta}(t)=x_0+\int^t_0\left[A_{\theta}(s)x_{\theta}(s)+B_{\theta}(s)u(s)\right]ds+\int^t_0\left[C_{\theta}(s)x_{\theta}(s)+D_{\theta}(s)u(s)\right]dW(s),\\
&y_{\theta}(t)=\frac{1}{2}\langle G_{\theta}x_{\theta}(T),x_{\theta}(T)\rangle+\int^T_t\big\{E_{\theta}(s)y_{\theta}(s)+F(s)z_{\theta}(s)
  +\frac{1}{2}\big[\langle L_{\theta}(s)x_{\theta}(s),x_{\theta}(s)\rangle\\& \ \ \ \ \ \ \ \ \ \ \ \ \ \ \ \ \ \ \ \ \ \ \ \ \ \ \ \ \ \ +2\langle S_{\theta}(s)x_{\theta}(s),u(s)\rangle+\langle R_{\theta}(s)u(s),u(s)\rangle\big]\big\}ds -\int_{t}^{T}z_{\theta}(s)dW(s).
\end{cases}
\end{align}
Here, $G_{\theta}\in\mathbb{S}_n$ and $A_{\theta}, B_{\theta},C_{\theta},D_{\theta},E_{\theta}, F, L_{\theta},S_{\theta}$, $R_{\theta}$ are deterministic functions on $[0,T]$ satisfy the following conditions:
\begin{description}
\item[(H6)] $A_{\theta},C_{\theta}\in \mathcal{M}^{\infty}(0,T;\mathbb{R}^{n\times n})$, $B_{\theta}\in \mathcal{M}^{\infty}(0,T;\mathbb{R}^{n\times k})$, $D_{\theta}\in C(0,T;\mathbb{R}^{n\times k})$, $E_{\theta}, F\in \mathcal{M}^{\infty}(0,T;\mathbb{R})$, $L_{\theta}\in \mathcal{M}^{\infty}(0,T;\mathbb{S}_n)$, $S_{\theta}\in \mathcal{M}^{\infty}(0,T;\mathbb{R}^{k\times n})$,
 $R_{\theta}\in C(0,T;\mathbb{S}_k)$;
\item[(H7)]  $G_{\theta}\geq 0$ , $L_{\theta}(t)-S^{\top}_{\theta}(t)R^{-1}_{\theta}(t)S_{\theta}(t)\geq 0$ and $R_{\theta}(t)\gg 0$, i.e., there exists a constant $\delta>0$ such that
 $R_{\theta}(t)\geq \delta I_{k\times k}$ for each $t\in[0,T]$ and $\theta\in\{1,2\}$.
\end{description}
In this case, the cost function is given by
\[
J(u)=\sup\limits_{Q^{\lambda}\in\mathcal{Q}}\int_{\Theta}y_{\theta}(0)Q^{\lambda}(d\theta)=\max(y_{1}(0),y_2(0)).
\]

First, we characterize the explicit form of optimal control via our maximum principle.  Let $\overline{u}$ be an optimal control. Then, from Theorem \ref{myq511} and the definition of $(p^1_{\theta},q^1_{\theta})$, there exists a probability  ${Q}^{\overline{\lambda}}\in\mathcal{Q}$ such that $\max(\overline{y}_{1}(0),\overline{y}_2(0))=\overline{\lambda}\overline{y}_{1}(0)+(1-\overline{\lambda})\overline{y}_{2}(0)$ and
\begin{align} \label{myq50061}
\begin{split}
&\overline{\lambda}m_1(t)\left[(B^{\top}_{1}(t)+D^{\top}_1(t)F(t))p^1_1(t)+D^{\top}_1(t)q^1_1(t)+S_1(t)\overline{x}_1(t)+ R_1(t)\overline{u}(t)\right]\\
&+(1-\overline{\lambda})m_2(t)\left[(B^{\top}_{2}(t)+D^{\top}_2(t)F(t))p^1_2(t)+D^{\top}_2(t)q^1_2(t)+S_2(t)\overline{x}_2(t)+ R_2(t)\overline{u}(t)\right] =0,
\end{split}
\end{align}
where $(p^1_{\theta},q^1_{\theta})$ is the solution of the following adjoint equation:
\begin{align*}
\begin{cases}
&-dp^1_{\theta}(t)=\big\{{L}_{\theta}(t)\overline{x}_{\theta}(t)+S^{\top}_{\theta}(t)\overline{u}(t)+\big[{A}^{\top}_{\theta}(t)+E_{\theta}(t)I_{n\times n}+F(t){C}^{\top}_{\theta}(t)\big]p^1_{\theta}(t)\\
&\ \ \ \ \ \ \ \ \ \ \ \ \ \  \ \ \ \ \ \ \ +\big[F(t)I_{n\times n}+{C}^{\top}_{\theta}(t)\big]q^1_{\theta}(t)\big\}dt-q^1_{\theta}(t)dW(t), \\
&\ \ \ p^1_{\theta}(T)={G}_{\theta}\overline{x}_{\theta}(T).
\end{cases}
\end{align*}
Note that
\[
m_\theta(t)=\widetilde{m}_{\theta}(t)\exp\left(\int^t_0F(s)dW(s)-\frac{1}{2}\int^t_0 F^2(s)ds\right) \ \text{with}\ \widetilde{m}_{\theta}(t)=\exp\left(\int^t_0E_{\theta}(s)ds\right).
\]
It follows from equation \eqref{myq50061} that
\begin{align*}
\begin{split}
&\overline{\lambda}\widetilde{m}_1(t)\left[(B^{\top}_{1}(t)+D^{\top}_1(t)F(t))p^1_1(t)+D^{\top}_1(t)q^1_1(t)+S_1(t)\overline{x}_1(t)\right]\\
&+(1-\overline{\lambda})\widetilde{m}_2(t)\left[(B^{\top}_{2}(t)+D^{\top}_2(t)F(t))p^1_2(t)+D^{\top}_2(t)q^1_2(t)+S_2(t)\overline{x}_2(t)\right]\\
&+ \left[\overline{\lambda}\widetilde{m}_1(t)R_1(t)+(1-\overline{\lambda})\widetilde{m}_2(t)R_2(t)\right]\overline{u}(t) =0.
\end{split}
\end{align*}

For convenience, set
\[
\overline{x}=\left[
\begin{array}
[c]{cc}%
\overline{x}_1\\
\overline{x}_2
\end{array}
\right], \ B=\left[
\begin{array}
[c]{cc}%
B_1\\
B_2
\end{array}
\right],\ D=\left[
\begin{array}
[c]{cc}%
D_1\\
D_2
\end{array}
\right],\ p^1=\left[
\begin{array}
[c]{cc}%
\widetilde{m}_1p^1_1\\
\widetilde{m}_2p^1_2
\end{array}
\right],\ q^1=\left[
\begin{array}
[c]{cc}%
\widetilde{m}_1 q^1_1\\
\widetilde{m}_2 q^1_2
\end{array}
\right]
\]
and\[
\overline{\Lambda}=\left[
\begin{array}
[c]{cc}%
\overline{\lambda}I_{n\times n} & 0 \\
0 & (1-\overline{\lambda})I_{n\times n}
\end{array}
\right],\ R^{\overline{\lambda}}= \overline{\lambda}\widetilde{m}_1R_1+(1-\overline{\lambda})\widetilde{m}_2R_2,\  S=\left[
\begin{array}
[c]{cc}%
\widetilde{m}_1S_1 & \widetilde{m}_2S_2
\end{array}
\right].
\]
Thus, the optimal control $\overline{u}$ satisfies the following equation:
\begin{align} \label{myq5006}
\begin{cases}
&\max(\overline{y}_{1}(0),\overline{y}_2(0))=\overline{\lambda}\overline{y}_{1}(0)+(1-\overline{\lambda})\overline{y}_{2}(0), \\
&(B^{\top}(t)+D^{\top}(t)F)\overline{\Lambda}p^1(t)+D^{\top}(t)\overline{\Lambda}q^1(t)+S(t)\overline{\Lambda}\overline{x}(t)+R^{\overline{\lambda}}(t)\overline{u}(t)=0,
\end{cases}
\end{align}
where $(p^1,q^1)$ is the solution of the following adjoint equation:
\begin{align} \label{myq5007}
\begin{cases}
&-dp^1(t)=\big\{\widetilde{L}(t)\overline{x}(t)+S^{\top}(t)\overline{u}(t)+
\big[\widetilde{A}^{\top}+F(t)\widetilde{C}^{\top}(t)\big]p^1(t)\\
&\ \ \ \ \ \ \ \ \ \ \ \ \ \ \ \ \ \ \ +\big[F(t)I_{2n\times 2n}+\widetilde{C}^{\top}(t)\big]q^1(t)\big\}dt-q^1(t)dW(t), \\
&\ \ \ p^1(T)=\widetilde{G}\overline{x}(T),
\end{cases}
\end{align}
with
\[
\widetilde{A}=\left[
\begin{array}
[c]{cc}%
A_1& 0\\
0&A_2
\end{array}
\right],\ \widetilde{C}=\left[
\begin{array}
[c]{cc}%
C_1& 0\\
0& C_2
\end{array}
\right],\ \widetilde{L}=\left[
\begin{array}
[c]{cc}%
\widetilde{m}_1 L_1& 0\\
0& \widetilde{m}_2L_2
\end{array}
\right],\ \widetilde{G}=\left[
\begin{array}
[c]{cc}%
\widetilde{m}_1(T) G_1& 0\\
0& \widetilde{m}_2(T) G_2
\end{array}
\right].
\]

Next, we suppose that
\[
\overline{\Lambda}p^1(t)=P(t)\overline{x}(t)
\]
with $P\in C^1(0,T; \mathbb{S}_{2n})$.
Note that
\begin{align}\label{myq5017}
d\overline{x}(t)=[\widetilde{A}(t)\overline{x}(t)+B(t)\overline{u}(t)]dt+[\widetilde{C}(t)\overline{x}(t)+D(t)\overline{u}(t)]dW(t).
\end{align}
Then applying It\^{o}'s formula to $P(t)\overline{x}(t)$ and recalling equation \eqref{myq5007}, we derive that
\[
\overline{\Lambda}q^1(t)=P(t)\widetilde{C}(t)\overline{x}(t)+P(t)D(t)\overline{u}(t),
\]
and
\begin{align*}
\begin{split}
&[\dot{P}(t)+P(t)\widetilde{A}(t)+\widetilde{L}(t)\overline{\Lambda}]\overline{x}(t)+[P(t)B(t)+\overline{\Lambda}S^{\top}(t)]\overline{u}(t)\\
&\ \ \ \ +[\widetilde{A}^{\top}+F(t)\widetilde{C}^{\top}(t)]\overline{\Lambda}p^1(t)+[F(t)I_{2n\times 2n}+\widetilde{C}^{\top}(t)]\overline{\Lambda}q^1(t)=0.
\end{split}
\end{align*}
In the sequel, the variable $t$ will be suppressed for convenience.
Therefore, it follows from equation \eqref{myq5006} that the optimal control satisfies
\begin{align}\label{myq5010}
\overline{u}=-\left(R^{\overline{\lambda}}+D^{\top}PD\right)^{-1}\left((B+DF)^{\top}P+D^{\top}P\widetilde{C}+S\overline{\Lambda}\right)\overline{x},
\end{align}
where $P$ is the solution to the following Riccati equation on time interval $[0,T]$:
\begin{align} \label{myq5011}
\begin{cases}
&\dot{P}+P(\widetilde{A}+F\widetilde{C})+(\widetilde{A}+F\widetilde{C})^{\top}P+\widetilde{C}^{\top}P\widetilde{C}+\widetilde{L}\overline{\Lambda}\\
&-\left(P(B+DF)+\overline{\Lambda}S^{\top}+\widetilde{C}^{\top}PD\right)\left(R^{\overline{\lambda}}+D^{\top}PD\right)^{-1}\left((B+DF)^{\top}P+D^{\top}P\widetilde{C}+S\overline{\Lambda}\right)=0, \\
&P(T)=\overline{\Lambda}\widetilde{G}.
\end{cases}
\end{align}
\begin{remark}
	{\upshape
		In the  FBSDE \eqref{myq986}, we assume that   $F$ is independent of  parameter $\theta$ to ensure
		all the coefficients  are deterministic in the second equation of \eqref{myq5006}. Otherwise, the Riccati equation \eqref{myq5011} would be a
		BSDE instead of ordinary differential equation (ODE). For this topic, we refer the reader to  \cite{Ts1}.
	}
\end{remark}

To ensure the well-posedness of  the  Riccati equation \eqref{myq5011}, we need the following resut.
\begin{lemma} \label{myw6012}
Assume \emph{(H7)} hold. Then, we have	for each $\overline{\lambda}\in [0,1]$,
\[
\widetilde{L}\overline{\Lambda}-\overline{\Lambda}S^{\top}(R^{\overline{\lambda}})^{-1}S\overline{\Lambda}\geq 0.
\]
\end{lemma}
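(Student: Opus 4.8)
The plan is to verify this matrix inequality directly as a scalar quadratic-form inequality, reducing it to a convexity property of the matrix-fractional function $v\mapsto\langle v,R^{-1}v\rangle$. Recall that $\widetilde{L}$, $\overline{\Lambda}$, $S$ and $R^{\overline{\lambda}}$ all carry the block structure induced by $\Theta=\{1,2\}$, and that the scalars $\widetilde{m}_\theta(t)=\exp(\int_0^t E_\theta(s)\,ds)$ are strictly positive. Writing a generic vector of $\mathbb{R}^{2n}$ as $\xi=(\xi_1,\xi_2)$ with $\xi_1,\xi_2\in\mathbb{R}^n$, setting $v_\theta=S_\theta\xi_\theta\in\mathbb{R}^k$, and putting $\alpha=\overline{\lambda}\,\widetilde{m}_1\ge 0$ and $\beta=(1-\overline{\lambda})\,\widetilde{m}_2\ge 0$, a direct expansion of the block products gives
\[
\xi^{\top}\big(\widetilde{L}\overline{\Lambda}-\overline{\Lambda}S^{\top}(R^{\overline{\lambda}})^{-1}S\overline{\Lambda}\big)\xi
=\alpha\,\xi_1^{\top}L_1\xi_1+\beta\,\xi_2^{\top}L_2\xi_2-\big\langle \alpha v_1+\beta v_2,\ (R^{\overline{\lambda}})^{-1}(\alpha v_1+\beta v_2)\big\rangle,
\]
where $R^{\overline{\lambda}}=\alpha R_1+\beta R_2\gg 0$. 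So it suffices to show the right-hand side is nonnegative for all $\xi_1,\xi_2$.

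First I would invoke (H7): the assumption $L_\theta-S_\theta^{\top}R_\theta^{-1}S_\theta\ge 0$ gives $\xi_\theta^{\top}L_\theta\xi_\theta\ge\langle v_\theta,R_\theta^{-1}v_\theta\rangle$ for $\theta=1,2$. Multiplying by the nonnegative weights $\alpha,\beta$ and summing, the whole problem is reduced to the single inequality
\[
\big\langle \alpha v_1+\beta v_2,\ (\alpha R_1+\beta R_2)^{-1}(\alpha v_1+\beta v_2)\big\rangle\ \le\ \alpha\,\langle v_1,R_1^{-1}v_1\rangle+\beta\,\langle v_2,R_2^{-1}v_2\rangle,
\]
valid for all $v_1,v_2\in\mathbb{R}^k$ and all $\alpha,\beta\ge 0$, with $R_1,R_2\gg 0$ by (H7).

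To establish this last inequality I would use the variational (Legendre) representation $\langle v,R^{-1}v\rangle=\sup_{w\in\mathbb{R}^k}\big(2\langle w,v\rangle-\langle w,Rw\rangle\big)$, valid for $R\gg 0$. For any fixed $w$, the identity $2\langle w,\alpha v_1+\beta v_2\rangle-\langle w,(\alpha R_1+\beta R_2)w\rangle=\alpha(2\langle w,v_1\rangle-\langle w,R_1w\rangle)+\beta(2\langle w,v_2\rangle-\langle w,R_2w\rangle)$, combined with the pointwise bound $2\langle w,v_\theta\rangle-\langle w,R_\theta w\rangle\le\langle v_\theta,R_\theta^{-1}v_\theta\rangle$, shows the left-hand expression is dominated by $\alpha\langle v_1,R_1^{-1}v_1\rangle+\beta\langle v_2,R_2^{-1}v_2\rangle$ for every $w$; taking the supremum over $w$ then produces exactly the matrix-fractional term and hence the target inequality. (Equivalently this is the joint convexity of $(v,R)\mapsto\langle v,R^{-1}v\rangle$ applied at the points $(sv_\theta,sR_\theta)$ with $s=\alpha+\beta$ and weights $\alpha/s,\beta/s$; the degenerate case $\alpha+\beta=0$, i.e. $\overline{\lambda}$ forcing both weights to vanish, is trivial.)

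The bookkeeping with the block matrices is routine, so I expect the only genuine content to lie in the matrix-fractional convexity inequality of the final step; everything else is a pointwise application of (H7) together with the strict positivity of the scalars $\widetilde{m}_\theta$.
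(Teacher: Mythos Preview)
Your proof is correct. Both you and the paper perform the same block-expansion to reduce the claim to the scalar matrix-fractional inequality
\[
\big\langle \alpha v_1+\beta v_2,\ (\alpha R_1+\beta R_2)^{-1}(\alpha v_1+\beta v_2)\big\rangle\ \le\ \alpha\,\langle v_1,R_1^{-1}v_1\rangle+\beta\,\langle v_2,R_2^{-1}v_2\rangle,
\]
after invoking $L_\theta-S_\theta^{\top}R_\theta^{-1}S_\theta\ge 0$ from (H7). The difference lies in how this last inequality is established. The paper proceeds by hand: it writes $R_\theta^{-1}$ in terms of $(R^{\overline{\lambda}})^{-1}$ via the identities $R_1^{-1}=[\alpha I+\beta R_1^{-1}R_2](R^{\overline{\lambda}})^{-1}$ and $R_2^{-1}=[\alpha R_2^{-1}R_1+\beta I](R^{\overline{\lambda}})^{-1}$, substitutes $\tilde{x}=(R^{\overline{\lambda}})^{-1}S_1x$ and $\tilde{y}=(R^{\overline{\lambda}})^{-1}S_2y$, and then reduces everything to two Cauchy--Schwarz estimates of the form $\tilde{x}^{\top}R_2\tilde{x}+\tilde{y}^{\top}R_1R_2^{-1}R_1\tilde{y}\ge 2\tilde{x}^{\top}R_1\tilde{y}$, proved by writing $R_\theta=R_\theta^{1/2}R_\theta^{1/2}$. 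Your route via the Legendre representation $\langle v,R^{-1}v\rangle=\sup_w\big(2\langle w,v\rangle-\langle w,Rw\rangle\big)$ is more conceptual: it identifies the inequality as an instance of the joint convexity of $(v,R)\mapsto\langle v,R^{-1}v\rangle$ on $\mathbb{R}^k\times\mathbb{S}_k^{++}$, and the proof is a two-line supremum argument with no matrix bookkeeping. The paper's computation is more explicit but tied to the two-block structure, whereas your argument would extend verbatim to $|\Theta|>2$. (Incidentally, the degenerate case $\alpha+\beta=0$ you flag cannot occur here, since $\widetilde{m}_\theta>0$ forces $\alpha+\beta>0$ for every $\overline{\lambda}\in[0,1]$.)
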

\begin{proof} For readers' convenience, we shall give the sketch of the proof. It suffices to prove that, for any $x,y\in\mathbb{R}^n$
\begin{align*}\begin{split}
&\left[
\begin{array}
[c]{cc}%
x^{\top} & y^{\top}
\end{array} \right]\widetilde{L}\overline{\Lambda}\left[
\begin{array}
[c]{cc}%
x\\
y
\end{array}
\right]-\left[
\begin{array}
[c]{cc}%
x^{\top} & y^{\top}
\end{array} \right]\overline{\Lambda}S^{\top}(R^{\overline{\lambda}})^{-1}S\overline{\Lambda}\left[
\begin{array}
[c]{cc}%
x\\
y
\end{array}
\right]\\
&=\overline{\lambda}\widetilde{m}_1x^{\top}L_1x+(1-\overline{\lambda})\widetilde{m}_2y^{\top}L_2y-\overline{\lambda}^2\widetilde{m}^2_1(S_1x)^{\top}(R^{\overline{\lambda}})^{-1}S_1x-(1-\overline{\lambda})^2\widetilde{m}^2_2(S_2y)^{\top}(R^{\overline{\lambda}})^{-1}S_2y\\
&\ \ \ \ -\overline{\lambda}(1-\overline{\lambda})\widetilde{m}_1\widetilde{m}_2\big[(S_1x)^{\top}(R^{\overline{\lambda}})^{-1}S_2y+(S_2y)^{\top}(R^{\overline{\lambda}})^{-1}S_1x\big]
\geq 0.
\end{split}
\end{align*}
 By the condition (H7), we have that
\[
\overline{\lambda}\widetilde{m}_1x^{\top}L_1x+(1-\overline{\lambda})\widetilde{m}_2y^{\top}L_2y\geq \overline{\lambda}\widetilde{m}_1(S_1x)^{\top}R_1^{-1}S_1x+(1-\overline{\lambda})\widetilde{m}_2(S_2y)^{\top}R_2^{-1}S_2y.
\]
Recalling the definition of $R^{\overline{\lambda}}$, we get that
\[
R^{-1}_1=[\overline{\lambda}\widetilde{m}_1I_{k\times k}+(1-\overline{\lambda})\widetilde{m}_2R^{-1}_1R_2](R^{\overline{\lambda}})^{-1} \ \text{and}\
R^{-1}_2=[\overline{\lambda}\widetilde{m}_1R^{-1}_2R_1+(1-\overline{\lambda})\widetilde{m}_2I_{k\times k}](R^{\overline{\lambda}})^{-1}.
\]
With the help of the above equations, we only need to prove that
  \begin{align*}\begin{split}  (S_1x)^{\top}R^{-1}_1R_2(R^{\overline{\lambda}})^{-1}S_1x+(S_2y)^{\top}R_2^{-1}R_1(R^{\overline{\lambda}})^{-1}S_2y  \geq (S_1x)^{\top}(R^{\overline{\lambda}})^{-1}S_2y+(S_2y)^{\top}(R^{\overline{\lambda}})^{-1}S_1x,
  \end{split}
  \end{align*}
 which is equivalent to
 \begin{align}\label{myw9001}
 \tilde{x}^{\top}R^{\overline{\lambda}}R^{-1}_1R_2\tilde{x}+\tilde{y}^{\top}R^{\overline{\lambda}}R^{-1}_2R_1\tilde{y}\geq \tilde{x}^{\top}R^{\overline{\lambda}}\tilde{y}+\tilde{y}^{\top}R^{\overline{\lambda}}\tilde{x},
 \end{align}
where $\tilde{x}=(R^{\overline{\lambda}})^{-1}S_1x$ and $\tilde{y}=(R^{\overline{\lambda}})^{-1}S_2y$. With the help of the fact that
$R^{\overline{\lambda}}= \overline{\lambda}\widetilde{m}_1R_1+(1-\overline{\lambda})\widetilde{m}_2R_2$,
the inequality \eqref{myw9001} reduces to
 \begin{align}\label{myw9002}
 \tilde{x}^{\top} R_2  \tilde{x}+\tilde{y}^{\top}R_1R^{-1}_2R_1\tilde{y}\geq \tilde{x}^{\top} R_1 \tilde{y}+ \tilde{y}^{\top} R_1 \tilde{x}\ \text{and} \
 \tilde{x}^{\top} R_2R^{-1}_1R_2  \tilde{x}+\tilde{y}^{\top}R_1\tilde{y}\geq \tilde{x}^{\top} R_2 \tilde{y}+ \tilde{y}^{\top} R_2 \tilde{x}.
 \end{align}

On the other hand, since $R_{\theta}$ is positive definite, there is a positive definite  matrix $R_{\theta}^{\frac{1}{2}}$ so that $R_{\theta}=R_{\theta}^{\frac{1}{2}}R_{\theta}^{\frac{1}{2}}$. It follows from
Cauchy-Schwartz inequality that
 \begin{align*}
 &\tilde{x}^{\top} R_2  \tilde{x}+\tilde{y}^{\top}R_1R^{-1}_2R_1\tilde{y}
 =(R_{2}^{\frac{1}{2}}\tilde{x})^{\top}(R_{2}^{\frac{1}{2}}\tilde{x})+(R_{2}^{-\frac{1}{2}}R_{1}\tilde{y})^{\top}(R_{2}^{-\frac{1}{2}}R_{1}\tilde{y})\geq \tilde{x}^{\top}R_{1}\tilde{y}+\tilde{y}^{\top}R_{1}\tilde{x},\\
 &\tilde{x}^{\top} R_2R^{-1}_1R_2  \tilde{x}+\tilde{y}^{\top}R_1\tilde{y}=(R_{1}^{-\frac{1}{2}}R_{2}\tilde{x})^{\top}(R_{1}^{-\frac{1}{2}}R_{2}\tilde{x})+
 (R_{1}^{\frac{1}{2}}\tilde{y})^{\top}(R_{1}^{\frac{1}{2}}\tilde{y})\geq \tilde{x}^{\top} R_2 \tilde{y}+ \tilde{y}^{\top} R_2 \tilde{x},
 \end{align*}
 which ends the proof.
\end{proof}

\begin{lemma}
Suppose that the conditions \emph{(H6)}-\emph{(H7)} hold.  Then, the Riccati equation \eqref{myq5011} admits a unique solution $P\geq 0.$
\end{lemma}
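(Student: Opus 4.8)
The plan is to treat \eqref{myq5011} as a standard symmetric matrix Riccati equation, prove local well-posedness in $\mathbb{S}_{2n}$, and then upgrade to a global nonnegative solution by reading $\langle P(t)\xi,\xi\rangle$ as the value of an auxiliary linear--quadratic problem. To lighten the notation write $\mathbb{A}=\widetilde{A}+F\widetilde{C}$, $\mathbb{B}=B+DF$, $\Phi=\widetilde{L}\overline{\Lambda}$, $\Psi=S\overline{\Lambda}$ and $K(P)=R^{\overline{\lambda}}+D^{\top}PD$; all coefficients are bounded by (H6), and since $G_{\theta}\ge 0$ and $\widetilde{m}_{\theta}>0$ the terminal datum $P(T)=\overline{\Lambda}\widetilde{G}$ is symmetric and nonnegative. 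One first checks the right-hand side of \eqref{myq5011} sends $\mathbb{S}_{2n}$ into $\mathbb{S}_{2n}$ (the transpose of the first bracket $P\mathbb{B}+\Psi^{\top}+\widetilde{C}^{\top}PD$ is the second bracket, $\Phi$ and $\widetilde{C}^{\top}P\widetilde{C}$ are symmetric, $K(P)$ is symmetric), so the flow stays symmetric. On the open set $\mathcal{O}=\{P\in\mathbb{S}_{2n}:K(P)>0\}$, which contains $\{P\ge0\}$ because $R^{\overline{\lambda}}\ge\delta I$ by (H7), the vector field is continuous in $t$ and smooth in $P$ (matrix inversion is smooth on $\mathcal{O}$), hence locally Lipschitz; Picard--Lindel\"{o}f then gives a unique maximal solution on some $(\tau,T]$ that remains in $\mathcal{O}$, so $K(P(s))>0$ along it automatically.

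The key step is to pin down the sign and size of $P$. Fix $t\in(\tau,T]$, take the controlled SDE from \eqref{myq5017}, $dX=(\mathbb{A}X+\mathbb{B}v)\,ds+(\widetilde{C}X+Dv)\,dW$ with $X(t)=\xi$, and set $J(v)=\mathbb{E}[\langle P(T)X(T),X(T)\rangle+\int_t^T(\langle\Phi X,X\rangle+2\langle\Psi X,v\rangle+\langle R^{\overline{\lambda}}v,v\rangle)\,ds]$. Applying It\^{o}'s formula to $\langle P(s)X(s),X(s)\rangle$ and substituting \eqref{myq5011} makes the $\mathbb{A}$- and $\widetilde{C}$-terms cancel while the Riccati quadratic term completes the square, yielding $J(v)=\langle P(t)\xi,\xi\rangle+\mathbb{E}\int_t^T\langle K(P)(v-v^{*}),v-v^{*}\rangle\,ds$ with $v^{*}=-K(P)^{-1}(\mathbb{B}^{\top}P+D^{\top}P\widetilde{C}+\Psi)X$; since $K(P)>0$ this shows $\langle P(t)\xi,\xi\rangle=\inf_{v}J(v)=J(v^{*})$. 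Now Lemma \ref{myw6012} is exactly $\Phi-\Psi^{\top}(R^{\overline{\lambda}})^{-1}\Psi\ge0$, so completing the square in $v$ gives $\langle\Phi X,X\rangle+2\langle\Psi X,v\rangle+\langle R^{\overline{\lambda}}v,v\rangle\ge0$ pointwise; together with $P(T)\ge0$ this forces $J(v)\ge0$ for every $v$, whence $P(t)\ge0$. Choosing instead $v\equiv0$ and using the standard estimate $\mathbb{E}[\sup_{[t,T]}|X^{0}|^{2}]\le C|\xi|^{2}$ (with $C$ independent of $t$) for $dX^{0}=\mathbb{A}X^{0}ds+\widetilde{C}X^{0}dW$ gives $\langle P(t)\xi,\xi\rangle\le J(0)\le C'|\xi|^{2}$, so that $0\le P(t)\le C'I$ uniformly on $(\tau,T]$.

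Finally I would close the global existence and uniqueness. The two-sided bound $0\le P\le C'I$ together with $K(P)\ge R^{\overline{\lambda}}\ge\delta I$ confines the maximal solution to a compact subset of $\mathcal{O}$, so it can neither blow up nor reach $\partial\mathcal{O}$ in finite time; hence $\tau<0$ and $P$ extends to a solution on all of $[0,T]$ with $P\ge0$. Uniqueness on $[0,T]$ then follows from the local uniqueness by continuation: the set where two solutions agree is nonempty (both equal $\overline{\Lambda}\widetilde{G}$ at $T$), relatively closed, and open by local uniqueness, hence all of $[0,T]$. The main obstacle is the nonnegativity step: one must be careful that the quadratic-cost representation is legitimate only while the solution stays in $\mathcal{O}$, but this is guaranteed because the maximal solution lives in $\mathcal{O}$ by construction, so $K(P)>0$ holds there, and Lemma \ref{myw6012} is precisely the structural condition that makes the running cost, and therefore the value function $\langle P(t)\xi,\xi\rangle$, nonnegative.
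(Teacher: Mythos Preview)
Your proof is correct and self-contained, but it takes a different route from the paper. The paper's proof is a two-line argument: it verifies, using (H7) and Lemma~\ref{myw6012}, that $\overline{\Lambda}\widetilde{G}\ge 0$ and $\widetilde{L}\overline{\Lambda}-\overline{\Lambda}S^{\top}(R^{\overline{\lambda}})^{-1}S\overline{\Lambda}\ge 0$, and then invokes Theorem~7.2 in Chapter~6 of Yong--Zhou \cite{YZ} as a black box to conclude existence, uniqueness, and nonnegativity of $P$. Your argument essentially re-derives that theorem in this special case---local existence via Picard--Lindel\"{o}f on the open set $\{K(P)>0\}$, the value-function representation $\langle P(t)\xi,\xi\rangle=\inf_v J(v)$ to obtain $P\ge 0$ (this is precisely where you, too, use Lemma~\ref{myw6012}), the upper bound from $v\equiv 0$, and continuation from the resulting two-sided a~priori bound. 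What the paper's approach buys is brevity; what yours buys is independence from an external reference and a transparent explanation of why the structural inequality of Lemma~\ref{myw6012} is exactly the condition that makes the running cost, and hence the value function, nonnegative.

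One small correction: the auxiliary SDE you write, $dX=(\mathbb{A}X+\mathbb{B}v)\,ds+(\widetilde{C}X+Dv)\,dW$ with $\mathbb{A}=\widetilde{A}+F\widetilde{C}$ and $\mathbb{B}=B+DF$, is not literally \eqref{myq5017} (there the drift carries $\widetilde{A}$ and $B$, not $\mathbb{A}$ and $\mathbb{B}$). The modified drift is, however, the correct one to pair with the Riccati equation \eqref{myq5011} so that the It\^{o} calculation completes the square cleanly. The argument stands once you drop the reference to \eqref{myq5017} and simply introduce this SDE as an auxiliary control problem tailored to the Riccati equation.
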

\begin{proof}
From the condition (H7) and Lemma \ref{myw6012}, it holds that
\[
\overline{\Lambda}\widetilde{G}\geq 0,\  \widetilde{L}\overline{\Lambda}-\overline{\Lambda}S^{\top}(R^{\overline{\lambda}})^{-1}S\overline{\Lambda}\geq 0.
\]
Thus, by Theorem 7.2 in Chap. 6 of \cite{YZ}, the  Riccati equation \eqref{myq5011} admits a unique solution \[P\in C^1(0,T;\mathbb{S}_{2n}).\] In particular, $P(t)\geq 0$ for each $t\in[0,T]$.
The proof is complete.
\end{proof}

Now, putting equation \eqref{myq5017} and equation \eqref{myq5010} together, we can get the explicit form of the optimal control $\overline{u}$ and the optimal state process $\overline{x}$, which  both depend on the constant $\overline{\lambda}$. Moreover, the  optimal robust cost is given by $\max(\overline{y}_1(0),\overline{y}_2(0))$, where $(\overline{x}_{\theta},\overline{y}_{\theta},\overline{u})$ satisfies equation \eqref{myq986}.

\begin{remark}{\upshape
Suppose that the cost function is given by $J(u)=y_1(0)$. Then, it is easy to check that the corresponding optimal control $\overline{u}_1=\overline{u}$ with $\overline{\lambda}=1$; see  \cite{YZ}.
}
\end{remark}

Finally, we  study the existence of the optimal control. By Theorem \ref{myq512}, if equation \eqref{myq5006} holds, then $\overline{u}$ is an optimal control. Thus, we need to discuss the existence of solution to equation \eqref{myq5006}.

\begin{theorem}\label{myq6011}
Suppose that the assumptions \emph{(H6)}-\emph{(H7)} hold. Then, there exist a constant $\overline{\lambda}^*\in [0,1]$ and  an admissible control $\overline{u}\in\mathcal{U}[0,T]$ satisfying equations \eqref{myq5006}-\eqref{myq5011}. Moreover,
$\overline{u}$ is the  optimal control.
\end{theorem}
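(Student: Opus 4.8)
The plan is to fix the ambiguity weight first and then adjust it by a scalar continuity argument. For each $\lambda\in[0,1]$ I would substitute $\overline{\lambda}=\lambda$ into the Riccati equation \eqref{myq5011}; the lemma just proved gives a unique nonnegative solution $P^{\lambda}\in C^1(0,T;\mathbb{S}_{2n})$. Since $R^{\lambda}:=\lambda\widetilde{m}_1R_1+(1-\lambda)\widetilde{m}_2R_2\geq\delta' I$ for some $\delta'>0$ (by (H7) and the positivity of $\widetilde{m}_\theta$) and $D^{\top}P^{\lambda}D\geq 0$, the matrix $R^{\lambda}+D^{\top}P^{\lambda}D$ is invertible with bounded inverse, so the feedback \eqref{myq5010} defines an admissible control $\overline{u}^{\lambda}\in\mathcal{U}[0,T]$ once the closed-loop state $\overline{x}^{\lambda}$ is obtained from the linear SDE \eqref{myq5017}. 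Defining $(p^{1,\lambda},q^{1,\lambda})$ through the ansatz $\overline{\Lambda}p^1=P\overline{x}$ used above, the triple solves the adjoint system \eqref{myq5007} and the stationarity (second) line of \eqref{myq5006} by the very computation that produced \eqref{myq5010}–\eqref{myq5011}. Thus for every $\lambda$ the only unverified relation is the consistency (first) line of \eqref{myq5006}, which I would arrange by a suitable choice of $\lambda$.

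The crux is the continuity of $\lambda\mapsto(y_1^{\lambda}(0),y_2^{\lambda}(0))$, where $y_\theta^{\lambda}(0)$ is read off from the backward equation in \eqref{myq986} along $(\overline{x}^{\lambda},\overline{u}^{\lambda})$. The coefficients of \eqref{myq5011} depend continuously on $\lambda$, and since the solution is global and unique, continuous dependence of ODE solutions on parameters yields $\lambda\mapsto P^{\lambda}$ continuous in $C(0,T;\mathbb{S}_{2n})$. I would then propagate this: the feedback gain $(R^{\lambda}+D^{\top}P^{\lambda}D)^{-1}(\cdots)$ is continuous in $\lambda$, hence so are the bounded coefficients of the linear closed-loop SDE \eqref{myq5017}; a standard $L^p$-estimate gives $\lambda\mapsto\overline{x}^{\lambda}$ continuous in $\mathcal{S}^{p}$, and feeding this into the (quadratic-data, Lipschitz-in-$(y,z)$) BSDE in \eqref{myq986} gives continuity of $y_\theta^{\lambda}(0)$. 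I expect this tracking of continuity through the nonlinear Riccati equation, the matrix inversion, and the forward–backward system to be the main obstacle.

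With continuity established, set $\phi(\lambda):=y_1^{\lambda}(0)-y_2^{\lambda}(0)$, continuous on $[0,1]$. For real numbers $a,b$ one has $\max(a,b)=\lambda a+(1-\lambda)b$ exactly when $\lambda=1$ and $a\geq b$, or $\lambda=0$ and $a\leq b$, or $a=b$ (any $\lambda$). Hence the consistency line of \eqref{myq5006} holds for a suitable $\overline{\lambda}^*$ in each of the following exhaustive cases: if $\phi(1)\geq 0$ take $\overline{\lambda}^*=1$; if $\phi(0)\leq 0$ take $\overline{\lambda}^*=0$; and if $\phi(0)>0>\phi(1)$ the intermediate value theorem supplies $\overline{\lambda}^*\in(0,1)$ with $\phi(\overline{\lambda}^*)=0$. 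With this $\overline{\lambda}^*$ the control $\overline{u}:=\overline{u}^{\overline{\lambda}^*}$ satisfies the entire system \eqref{myq5006}–\eqref{myq5011}.

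It remains to conclude optimality through the sufficient condition, Theorem \ref{myq512}. The terminal cost $\varphi_\theta(x)=\tfrac12\langle G_\theta x,x\rangle$ is convex because $G_\theta\geq 0$, and $h_\theta$ is affine in $(y,z)$ with $(x,u)$-part the quadratic form of coefficient matrix $\left[\begin{smallmatrix} L_\theta & S_\theta^{\top}\\ S_\theta & R_\theta\end{smallmatrix}\right]$, which is nonnegative by the Schur complement (using $R_\theta\gg 0$ and $L_\theta-S_\theta^{\top}R_\theta^{-1}S_\theta\geq 0$ from (H7), in the spirit of Lemma \ref{myw6012}); hence $h_\theta$ is convex in $(x,y,z,u)$. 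Since $U=\mathbb{R}^k$, the variational inequality of Theorem \ref{myq512} for all $u\in U$ is equivalent to the vanishing of $\int_{\Theta}m_\theta\partial_uH_\theta\,\overline{Q}(d\theta)$, which is exactly the stationarity already encoded in \eqref{myq50061}–\eqref{myq5006}, while the consistency line guarantees $Q^{\overline{\lambda}^*}\in\mathcal{Q}^{\overline{u}}$. Theorem \ref{myq512} then yields that $\overline{u}$ is optimal, which completes the proof.
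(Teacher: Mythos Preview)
Your proposal is correct and follows essentially the same route as the paper: for each $\lambda$ solve the Riccati equation, build the feedback control, observe that the stationarity line of \eqref{myq5006} holds automatically, then use continuity of $\lambda\mapsto y_1^\lambda(0)-y_2^\lambda(0)$ and the intermediate value theorem to satisfy the consistency line, and finally invoke Theorem~\ref{myq512}. The only notable differences are that the paper establishes the continuity of $P^\lambda$ quantitatively (a Lipschitz bound via Gronwall on the difference $P^{\lambda}-P^{\lambda'}$) rather than by appeal to general continuous dependence, and that you spell out the convexity verification for $h_\theta$ via the Schur complement, which the paper leaves implicit in the sentence preceding the theorem.
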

\begin{proof}
Note that the optimal control $\overline{u}$ and the optimal  state process $(\overline{x}_{\theta},\overline{y}_{\theta})$
satisfies equations  \eqref{myq5017}, \eqref{myq5010},  \eqref{myq5011} and  \eqref{myq986} for some constant $\overline{\lambda}^*\in [0,1]$.
Then,
denote by $P^{\overline{\lambda}}$ the solution to the Riccati equation \eqref{myq5011} for each constant $\overline{\lambda}\in[0,1]$.
Similarly, we can  also define ${\overline{x}}^{\overline{\lambda}}_{\theta}$, ${\overline{u}}^{\overline{\lambda}}$ and ${\overline{y}}^{\overline{\lambda}}_{\theta}$. By the construction of the Riccati equation \eqref{myq5011}, the second equality of equation \eqref{myq5006} holds for any $({\overline{x}}^{\overline{\lambda}}_{\theta},{\overline{u}}^{\overline{\lambda}})$ with  $\overline{\lambda}\in[0,1]$.
Thus,  we only need to find a constant $\overline{\lambda}^*\in[0,1]$ so that the first equality of equation \eqref{myq5006} holds.
 The proof is divided into the following three steps.

{\bf Step 1 ($y^0_{1}(0)\leq y^0_{2}(0)$ or $y^1_{1}(0)\geq y^1_{2}(0)$).}
Set
\begin{align*}
(\overline{\lambda},\overline{u})=
\begin{cases}
& (0,\overline{u}^0), \ \  \text{if $y^0_{1}(0)\leq y^0_{2}(0)$}, \\
& (1,\overline{u}^1),\ \ \text{if $y^1_{1}(0)\geq y^1_{2}(0)$}.
\end{cases}
\end{align*}
Then, the above linear quadratic control problem with model uncertainty reduces to the classical case,
and one can easily check that the desired results hold.

{\bf Step 2 ($y^0_{1}(0)>y^0_{2}(0)$ and $y^1_{1}(0)< y^1_{2}(0)$).} Note that all coefficients are uniformly bounded. From the proof Theorem 7.2 in Chap. 6 of \cite{YZ}, we could get that $P^{\overline{\lambda}}$ is uniformly bounded on $[0,T]$. We claim that
\begin{align}\label{myq7001}
|P^{\overline{\lambda}}(t)-P^{\overline{\lambda}^{\prime}}(t)|\leq C(T,\delta,\ell^*)|\overline{\lambda}-\overline{\lambda}^{\prime}|, \ \forall t\in[0,T],
\end{align}
whose proof will be given in step 3 and $\ell^*=(\widetilde{A},\widetilde{C},\widetilde{L},\widetilde{G},B,S,D,E_1,E_2,F)$.
Applying Lemma \ref{myq1} in appendix A and recalling equations \eqref{myq5017}, \eqref{myq5010}, we obtain that
\[
\mathbb{E}\left[\sup\limits_{0\leq t\leq T}\left|\overline{x}^{\overline{\lambda}}(t)-\overline{x}^{\overline{\lambda}^{\prime}}(t)\right|^4+\sup\limits_{0\leq t\leq T}\left|\overline{u}^{\overline{\lambda}}(t)-\overline{u}^{\overline{\lambda}^{\prime}}(t)\right|^4\right]\leq C(T,\delta,\ell^*)|\overline{\lambda}-\overline{\lambda}^{\prime}|^4,
\]
which together with Lemma \ref{myq2} in appendix A implies that
\[
\left|\overline{y}^{\overline{\lambda}}_1(0)-\overline{y}^{\overline{\lambda}^{\prime}}_1(0)\right|+
\left|\overline{y}^{\overline{\lambda}}_2(0)-\overline{y}^{\overline{\lambda}^{\prime}}_2(0)\right|\leq  C(T,\delta,\ell^*)|\overline{\lambda}-\overline{\lambda}^{\prime}|.
\]
It follows that $\overline{y}^{\overline{\lambda}}_1(0)-\overline{y}^{\overline{\lambda}}_2(0)$ is continuous in $\overline{\lambda}$.

Note that $y^0_{1}(0)-y^0_{2}(0)>0$ and  $y^1_{1}(0)- y^1_{2}(0)<0$.  Therefore, by intermediate value theorem, there exists a constant $\overline{\lambda}^*\in (0,1)$ such that
\[
\overline{y}^{\overline{\lambda}^*}_1(0)=\overline{y}^{\overline{\lambda}^*}_2(0).
\]
Moreover, it holds that
\[
\max\left(\overline{y}^{\overline{\lambda}^*}_1(0),\overline{y}^{\overline{\lambda}^*}_2(0)\right)=\overline{\lambda}^*\overline{y}^{\overline{\lambda}^*}_1(0)+(1-\overline{\lambda}^*)\overline{y}^{\overline{\lambda}^*}_2(0),
\]
which is the desired result.

{\bf Step 3 (The proof of inequality \eqref{myq7001}).}
Denote $\widehat{P}=P^{\overline{\lambda}}-P^{\overline{\lambda}^{\prime}}$. Then $\widehat{P}$ satisfies the following linear ODE:
\begin{align*}
\begin{cases}
&\dot{\widehat{P}}+\widehat{P}(\widetilde{A}+F\widetilde{C})+(\widetilde{A}+F\widetilde{C})^{\top}\widehat{P}+\widetilde{C}^{\top}\widehat{P}\widetilde{C}+\widetilde{L}(\overline{\Lambda}-\overline{\Lambda}^{\prime})\\&
-\left(\widehat{P}(B+DF)+(\overline{\Lambda}-\overline{\Lambda}^{\prime})S^{\top}+\widetilde{C}^{\top}\widehat{P}D\right)\widehat{R}^{-1}\left((B+DF)^{\top}P^{\overline{\lambda}}+D^{\top}P^{\overline{\lambda}}\widetilde{C}+S\overline{\Lambda}\right)\\
&+\left(P^{\overline{\lambda}^{\prime}}(B+DF)+\overline{\Lambda}^{\prime}S^{\top}+\widetilde{C}^{\top}P^{\overline{\lambda}^{\prime}}D\right)\widehat{R}^{-1}(\check{R}+D^{\top}\widehat{P}D)(\widehat{R}^{\prime})^{-1}\left((B+DF)^{\top}P^{\overline{\lambda}}+D^{\top}P^{\overline{\lambda}}\widetilde{C}+S\overline{\Lambda}\right)\\
&-\left(P^{\overline{\lambda}^{\prime}}(B+DF)+\overline{\Lambda}^{\prime}S^{\top}+\widetilde{C}^{\top}P^{\overline{\lambda}^{\prime}}D\right)(\widehat{R}^{\prime})^{-1}\left((B+DF)^{\top}\widehat{P}+D^{\top}\widehat{P}\widetilde{C}+S(\overline{\Lambda}-\overline{\Lambda}^{\prime})\right)=0, \\
&\widehat{P}(T)=(\overline{\Lambda}-\overline{\Lambda}^{\prime})\widetilde{G},
\end{cases}
\end{align*}
where  $\check{R}=R^{\overline{\lambda}}-R^{\overline{\lambda}^{\prime}},\widehat{R}=(R^{\overline{\lambda}}+D^{\top}P^{\overline{\lambda}}D)$ and $\widehat{R}^{\prime}=(R^{\overline{\lambda}^{\prime}}+D^{\top}P^{\overline{\lambda}^{\prime}}D)$.
Note that $\widehat{R}^{-1}$ and $(\widehat{R}^{\prime})^{-1}$ are uniformly bounded due to the assumption (H7). Then, using Gronwall's
inequality, we could deduce that
\[
|\widehat{P}(t)|\leq C(T,\delta,\ell^*)|\overline{\lambda}-\overline{\lambda}^{\prime}|, \ \forall t\in[0,T],
\]
which completes the proof.
\end{proof}

\section*{Acknowledgments}

 The authors would like to thank the editor and two anonymous referees for their careful reading and helpful suggestions,  which have greatly improved the presentation.

\appendix
\renewcommand\thesection{\normalsize Appendix A:  SDEs and BSDEs}
\section{ }

\renewcommand\thesection{A}

In this appendix, we state some well-known results about  SDEs and BSDEs for readers' convenience.
First,  consider the
following forward SDEs on $[0,T]$:
\begin{align} \label{App1y1}
&x(t)=x_0+\int^t_0b(s,x(s))ds+\int^t_0\sigma(s,x(s))dW(s),
\end{align}
where $b:[0,T]\times\Omega\times\mathbb{R}^{n}\rightarrow
\mathbb{R}^{n}$ and $\sigma:[0,T]\times\Omega\times\mathbb{R}^{n}\rightarrow
\mathbb{R}^{n\times d} $ satisfy the following assumptions:
\begin{description}
\item[(B1)] For each $x\in\mathbb{R}^n$, $b(\cdot,x)\in \mathcal{H}^{1,p}(0,T;\mathbb{R}^n)$ and $\sigma(\cdot,x)\in \mathcal{H}^p(0,T;\mathbb{R}^{n\times d})$ for some $p\geq 2$;
\item[(B2)] There exists some positive constant $L$  such that for any $x,x^{\prime}\in\mathbb{R}^n$,%
\begin{align*}
&|b(t,x)-b(t,x^{\prime})|+|\sigma(t,x)-\sigma(t,x^{\prime})|\leq
L|x-x^{\prime}|.
\end{align*}
\end{description}

\begin{lemma}\label{myq1}
Assume  that the conditions \emph{(B1)} and \emph{(B2)} hold. Then, the SDE \eqref{App1y1} admits a unique solution $x\in \mathcal{S}^p(0,T;\mathbb{R}^n).$ Moreover, it holds that
\[
\mathbb{E}\left[\sup_{0\leq t\leq T}|x(t)|^p\right]\leq  C(L,T,p) \mathbb{E}\left[|x_0|^p+\left(\int^T_0|b(t,0)|dt\right)^p+ \left(\int^T_0|\sigma(t,0)|^2dt\right)^{\frac{p}{2}} \right].
\]
\end{lemma}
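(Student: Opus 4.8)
The plan is to prove Lemma \ref{myq1} by combining a contraction-mapping argument for existence and uniqueness with a Gronwall-type argument for the a priori bound; the two analytic tools doing all the work are the Burkholder--Davis--Gundy (BDG) inequality for the stochastic integral and H\"older's inequality for the drift term, both applied together with the Lipschitz condition (B2).

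First I would establish the a priori estimate. Decomposing $b(s,x(s))=b(s,0)+\big(b(s,x(s))-b(s,0)\big)$ and similarly for $\sigma$, condition (B2) yields the pointwise bounds $|b(s,x(s))|\leq |b(s,0)|+L|x(s)|$ and $|\sigma(s,x(s))|^2\leq 2|\sigma(s,0)|^2+2L^2|x(s)|^2$. Taking the supremum over $[0,t]$ and then $p$-th moments, the drift contribution is controlled by H\"older's inequality applied to $\big(\int_0^t|b(s,x(s))|\,ds\big)^p$, while the martingale part is controlled by BDG in the form $\mathbb{E}\big[\sup_{s\leq t}|\int_0^s\sigma(r,x(r))\,dW(r)|^p\big]\leq C_p\,\mathbb{E}\big[(\int_0^t|\sigma(s,x(s))|^2\,ds)^{p/2}\big]$. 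Writing $\phi(t)=\mathbb{E}\big[\sup_{0\leq s\leq t}|x(s)|^p\big]$, these combine into
\[
\phi(t)\leq C(L,T,p)\left(\mathbb{E}[|x_0|^p]+\mathbb{E}\Big[\Big(\int_0^T|b(s,0)|\,ds\Big)^p\Big]+\mathbb{E}\Big[\Big(\int_0^T|\sigma(s,0)|^2\,ds\Big)^{p/2}\Big]\right)+C(L,T,p)\int_0^t\phi(s)\,ds,
\]
and Gronwall's lemma delivers exactly the claimed estimate, with the right-hand data terms finite thanks to (B1).

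Next I would treat existence and uniqueness. I define the Picard map $\Phi(x)(t)=x_0+\int_0^t b(s,x(s))\,ds+\int_0^t\sigma(s,x(s))\,dW(s)$ on $\mathcal{S}^p(0,T;\mathbb{R}^n)$ and show it is well-defined there (the same BDG and H\"older estimates show $\Phi$ maps $\mathcal{S}^p$ into itself). Applying these estimates to the difference $\Phi(x)-\Phi(x')$ and using (B2) gives a bound of the form $\mathbb{E}\big[\sup_{s\leq t}|\Phi(x)(s)-\Phi(x')(s)|^p\big]\leq C(L,T,p)\int_0^t\mathbb{E}\big[\sup_{r\leq s}|x(r)-x'(r)|^p\big]\,ds$; equipping $\mathcal{S}^p$ with the equivalent weighted norm $\|x\|_\beta=\mathbb{E}\big[\sup_{t}e^{-\beta t}|x(t)|^p\big]^{1/p}$ for $\beta$ large (or iterating the time integral and summing the resulting factorial series) makes $\Phi$ a contraction, so its unique fixed point is the desired solution and automatically lies in $\mathcal{S}^p$. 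Uniqueness follows from the same difference estimate and Gronwall.

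The main obstacle is the latent circularity in the a priori estimate: Gronwall's lemma may only be applied once $\phi(t)<\infty$ is known, yet finiteness is part of what we are proving. I would resolve this by localization: introduce the stopping times $\tau_N=\inf\{t:|x(t)|\geq N\}$, derive the displayed inequality for the stopped process $x(\cdot\wedge\tau_N)$ where every term is manifestly finite, apply Gronwall on $[0,t\wedge\tau_N]$ to get a bound uniform in $N$, and then let $N\to\infty$ via Fatou's lemma. Once existence in $\mathcal{S}^p$ is secured through the fixed-point step, this finiteness comes for free and the estimate is clean; beyond this localization bookkeeping, all the ingredients are classical.
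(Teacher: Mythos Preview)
Your argument is correct and follows the standard route (BDG plus H\"older plus Gronwall for the estimate, Picard/contraction for well-posedness, with localization to justify finiteness); the paper itself does not give a proof of this lemma but simply refers the reader to Karatzas--Shreve \cite{Zhang}, where precisely this kind of argument is carried out. So there is nothing to compare against beyond noting that your sketch is exactly the classical proof one finds in such references.
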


For the proof of  Lemma  \ref{myq1}, we refer the reader to \cite{Zhang}.
Next, consider the
following backward SDEs on $[0,T]$:
\begin{align} \label{App1y2}
&y(t)=\xi+\int^T_tf(s,y(s),z(s))ds-\int_{t}^{T}z(s)dW(s),
\end{align}
where  $\xi\in L^p(\mathscr{F}_T;\mathbb{R}^m)$, $p>1$ and $f:[0,T]\times\Omega\times\mathbb{R}^m\times\mathbb{R}^{m\times d}\rightarrow
\mathbb{R}^m$ satisfies the following assumptions:
\begin{description}
\item[(B3)] For each $(y,z)\in\mathbb{R}\times\mathbb{R}^{d}$, $f(\cdot,y,z)\in \mathcal{H}^{1,p}(0,T;\mathbb{R}^m)$;
\item[(B4)] There exists some positive constant $L$  such that for any $(y,z),(y^{\prime},z^{\prime})\in\mathbb{R}^m\times\mathbb{R}^{m\times d}$,%
\begin{align*}
&|f(t,y,z)-f(t,y^{\prime},z^{\prime})|\leq
L(|y-y^{\prime}|+|z-z^{\prime}|).
\end{align*}
\end{description}

\begin{lemma}[\cite{BD}]\label{myq2}
Assume  that the conditions \emph{(B3)} and \emph{(B4)} hold. Then, the BSDE \eqref{App1y2} admits a unique solution $(y,z)\in \mathcal{S}^p(0,T;\mathbb{R}^m)\times  \mathcal{H}^p(0,T;\mathbb{R}^{m\times d}).$ Moreover, it holds that
\[
\mathbb{E}\left[\sup_{0\leq t\leq T}|y(t)|^p+\left(\int^T_0|z(t)|^2dt\right)^{\frac{p}{2}}\right]\leq  C(L,T,p) \mathbb{E}\left[|\xi|^p+\left(\int^T_0|f(t,0,0)|dt\right)^p \right].
\]
\end{lemma}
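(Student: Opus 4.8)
The plan is to prove the result by the classical two-step scheme: first well-posedness in $\mathcal{S}^p\times\mathcal{H}^p$ via a fixed-point argument, and then the a priori $L^p$ estimate by an energy method built on It\^{o}'s formula. Both are standard for Lipschitz generators (see \cite{BD}), so I would only sketch the mechanism. For existence and uniqueness, I would introduce the map $\Phi$ that sends $(y,z)\in\mathcal{S}^p\times\mathcal{H}^p$ to the unique pair solving the BSDE with frozen driver $f(\cdot,y_{\cdot},z_{\cdot})$, obtained from the martingale representation of $\mathbb{E}\big[\xi+\int_0^T f(s,y_s,z_s)\,ds\mid\mathscr{F}_t\big]$. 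Condition (B4) together with the a priori estimate below shows that, on $\mathcal{S}^p\times\mathcal{H}^p$ equipped with a weight $e^{\beta t}$ for $\beta$ large, $\Phi$ is a strict contraction, so Banach's fixed-point theorem yields the unique solution. For $p\geq 2$ this is direct; for $1<p<2$ I would approximate $\xi$ and $f(\cdot,0,0)$ by $L^2$-truncated data, solve in $\mathcal{S}^2\times\mathcal{H}^2$, and pass to the limit using the a priori bound and the stability estimate for the difference of two solutions.

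The heart of the matter is the a priori estimate. I would apply It\^{o}'s formula to $|y_t|^p$ (for $1<p<2$, to the regularization $(|y_t|^2+\varepsilon)^{p/2}$, letting $\varepsilon\downarrow 0$), which produces on the left-hand side the term $c_p\int_t^T|y_s|^{p-2}I_{\{y_s\neq 0\}}|z_s|^2\,ds$ with $c_p:=\tfrac{p(p-1)}{2}>0$:
\[
|y_t|^p+c_p\int_t^T|y_s|^{p-2}I_{\{y_s\neq 0\}}|z_s|^2\,ds=|\xi|^p+p\int_t^T|y_s|^{p-2}\langle y_s,f(s,y_s,z_s)\rangle I_{\{y_s\neq 0\}}\,ds-p\int_t^T|y_s|^{p-2}\langle y_s,z_s\,dW_s\rangle.
\]
Using (B4) in the form $|f(s,y,z)|\leq |f(s,0,0)|+L|y|+L|z|$, I would bound the driver integrand by $|y_s|^{p-1}\big(|f(s,0,0)|+L|y_s|+L|z_s|\big)$ and absorb the cross term through Young's inequality $pL|y_s|^{p-1}|z_s|\leq \tfrac{c_p}{2}|y_s|^{p-2}|z_s|^2+C|y_s|^p$. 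The positivity of $c_p$, hence the hypothesis $p>1$, is exactly what makes this absorption possible. For the remaining driver contribution I would factor out the supremum, $|y_s|^{p-1}\leq\sup_{0\leq s\leq T}|y_s|^{p-1}$, and apply Young's inequality with exponents $\tfrac{p}{p-1},p$ to get $\sup_s|y_s|^{p-1}\int_0^T|f(s,0,0)|\,ds\leq \delta\sup_s|y_s|^p+C_\delta\big(\int_0^T|f(s,0,0)|\,ds\big)^p$; this is precisely what generates the term $\big(\int_0^T|f(t,0,0)|\,dt\big)^p$ in the statement rather than $\int_0^T|f(t,0,0)|^p\,dt$.

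To close the estimate I would control the stochastic integral by the Burkholder--Davis--Gundy inequality, noting that its bracket is dominated by $\sup_s|y_s|^p\int_0^T|y_s|^{p-2}I_{\{y_s\neq 0\}}|z_s|^2\,ds$, and split by AM--GM so that the $\sup_s|y_s|^p$ factor is absorbed into the left-hand side; a Gr\"onwall argument (or the $e^{\beta t}$ weight) then disposes of the $\int_t^T|y_s|^p\,ds$ term, yielding the bound on $\mathbb{E}[\sup_t|y_t|^p]$. Finally, to recover $\mathbb{E}\big[(\int_0^T|z_s|^2\,ds)^{p/2}\big]$ I would return to the It\^{o} identity for $|y_t|^2$, isolate $\int_0^T|z_s|^2\,ds$, raise to the power $p/2$, and take expectations, controlling the resulting martingale by BDG and AM--GM and using the already-established bound on $\mathbb{E}[\sup_t|y_t|^p]$. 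I expect the main obstacle to be the bookkeeping in the range $1<p<2$: the loss of $C^2$-regularity of $y\mapsto|y|^p$ forces the $(|y|^2+\varepsilon)^{p/2}$ regularization with a careful passage $\varepsilon\downarrow 0$, and the simultaneous absorption of both the weighted $z$-term and the BDG martingale while keeping all constants balanced (as in \cite{BD}) is delicate, the positivity of $c_p$ being the pivotal structural fact.
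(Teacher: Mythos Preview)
Your proposal is correct and follows the standard scheme of \cite{BD}, which is exactly the reference the paper invokes. Note, however, that the paper itself does not supply a proof of this lemma at all: it is stated in Appendix~A as a quotation from \cite{BD} and used as a black box throughout. So there is nothing to compare against beyond the cited source, and your sketch reproduces that argument faithfully.
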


\appendix
\renewcommand\thesection{\normalsize Appendix B: The complement proofs}
\section{ }
\renewcommand\thesection{B}
\begin{lemma} \label{myw209} Assume that \emph{(H1)}-\emph{(H4)}  hold. Then,
\[
\lim\limits_{\epsilon\rightarrow 0}\sup\limits_{\mu(\theta,\theta^{\prime})\leq \epsilon}\mathbb{E}\left[\sup_{0\leq t\leq T}|x_{\theta}(t)-x_{\theta^{\prime}}(t)|^4+\sup_{0\leq t\leq T}|y_{\theta}(t)-y_{\theta^{\prime}}(t)|^2+\int^T_0|z_{\theta}(t)-z_{\theta^{\prime}}(t)|^2dt\right]=0.
\]
\end{lemma}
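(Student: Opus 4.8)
The plan is to treat the forward and backward equations in turn; in each case I would split the difference of the coefficients into a part that is Lipschitz in the state variables and a part that measures the $\theta$-dependence, and then control the latter by the localization technique already used in the proofs of Lemma \ref{myw201} and Lemma \ref{myw202}. The crucial point throughout is that all the a priori moment bounds of Lemma \ref{myw301} have constants $C(L,T,q)$ independent of $\theta$, so every estimate below is automatically uniform over the set $\{\mu(\theta,\theta^{\prime})\le\epsilon\}$.

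First I would handle the forward process. Setting $\Delta x(t)=x_{\theta}(t)-x_{\theta^{\prime}}(t)$, I would write $b_{\theta}(t,x_{\theta}(t),u(t))-b_{\theta^{\prime}}(t,x_{\theta^{\prime}}(t),u(t))$ as the sum of $b_{\theta}(t,x_{\theta}(t),u(t))-b_{\theta}(t,x_{\theta^{\prime}}(t),u(t))$, which is bounded by $L|\Delta x(t)|$ via (H1), and the $\theta$-continuity term $\Psi_b(t):=b_{\theta}(t,x_{\theta^{\prime}}(t),u(t))-b_{\theta^{\prime}}(t,x_{\theta^{\prime}}(t),u(t))$, treating $\sigma$ the same way with the analogous term $\Psi_{\sigma}$. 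Absorbing the Lipschitz parts, Lemma \ref{myq1} then yields
\[
\mathbb{E}\Big[\sup_{0\le t\le T}|\Delta x(t)|^4\Big]\le C(L,T)\,\mathbb{E}\Big[\Big(\int_0^T|\Psi_b(t)|\,dt\Big)^4+\Big(\int_0^T|\Psi_{\sigma}(t)|^2\,dt\Big)^2\Big].
\]
To estimate the right-hand side I would localize: on the event $\{|x_{\theta^{\prime}}(t)|\le N,\ |u(t)|\le N\}$ assumption (H4) gives $|\Psi_b(t)|\le\overline{\omega}_N(\mu(\theta,\theta^{\prime}))$, while on the complementary event I would use the linear growth implied by (H1) together with H\"older's inequality and the uniform moment bound of Lemma \ref{myw301}, exactly as in the estimate leading to \eqref{myw5045}, to bound the contribution by $C\,N^{(4-p)/p}$ uniformly over $\{\mu(\theta,\theta^{\prime})\le\epsilon\}$. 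Sending $\epsilon\rightarrow0$ and then $N\rightarrow\infty$ annihilates both pieces and gives the claim for $x$.

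For the backward component the argument is parallel but feeds on the $x$-estimate just obtained. Writing $\Delta y=y_{\theta}-y_{\theta^{\prime}}$, $\Delta z=z_{\theta}-z_{\theta^{\prime}}$ and applying Lemma \ref{myq2}, the terminal difference $\varphi_{\theta}(x_{\theta}(T))-\varphi_{\theta^{\prime}}(x_{\theta^{\prime}}(T))$ and the generator difference each split into a part that is Lipschitz in $(x,y,z)$ and a $\theta$-continuity part. The Lipschitz-in-$x$ contributions (which carry the factor $C(L)(1+|x_{\theta}|+|x_{\theta^{\prime}}|)$ from (H1)) are controlled by the $x$-estimate of the previous step combined with the moment bounds, using H\"older's inequality to dispose of the growth factor; the Lipschitz-in-$(y,z)$ contributions are absorbed by the BSDE estimate; and the $\theta$-continuity parts are handled by the identical localization as above.

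The main obstacle is precisely this localization of the $\theta$-dependence: assumption (H4) is only available on bounded sets $\{|x|,|y|,|z|,|u|\le N\}$, so the genuine work lies in showing that the contribution of the large-state region is negligible. This is where the extra integrability $p>4$ and the $\theta$-uniform moment estimates of Lemma \ref{myw301} are indispensable, allowing the H\"older split to produce a bound of the form $C(|\overline{\omega}_N(\epsilon)|^{2}+N^{(4-p)/p})$ that is uniform in the pair $(\theta,\theta^{\prime})$ and vanishes upon sending first $\epsilon\rightarrow0$ and then $N\rightarrow\infty$.
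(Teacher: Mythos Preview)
Your proposal is correct and follows essentially the same two-step scheme as the paper's proof: first the SDE estimate via Lemma \ref{myq1} with the $\theta$-continuity remainder handled by the localization/H\"older argument and the bound $C(|\overline{\omega}_N(\epsilon)|^4+N^{(4-p)/p})$, then the BSDE estimate via Lemma \ref{myq2} feeding on the forward result. The only cosmetic differences are that the paper writes the Lipschitz part via a mean-value integral $\int_0^1\partial_x b_{\theta'}(\ldots)d\lambda$ rather than quoting the Lipschitz constant directly, and evaluates the $\theta$-difference at $x_\theta$ rather than $x_{\theta'}$; neither affects the argument.
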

\begin{proof}
For any $\theta,\theta^{\prime}\in\Theta$, set $(\alpha,\beta,\zeta):=(x_{\theta}-x_{\theta^{\prime}},y_{\theta}-y_{\theta^{\prime}},z_{\theta}-z_{\theta^{\prime}})$.
The proof is divided into the following two steps. For convenience, we omit the argument $u$.

{\bf Step 1 ($x$-estimate).} Denote
\begin{align*}
 &A(t)=\int^1_0 \partial_x b_{\theta^{\prime}}(t,{x}_{\theta^{\prime}}(t)+\lambda({x}_{\theta}(t)-{x}_{\theta^{\prime}}(t)))d\lambda,\
 B^{i}(t)=\int^1_0 \partial_x \sigma^i_{\theta^{\prime}}(t,{x}_{\theta^{\prime}}(t)+\lambda({x}_{\theta}(t)-{x}_{\theta^{\prime}}(t)))d\lambda,\\
 &C(t)= b_{\theta}(t,{x}_{\theta}(t))-b_{\theta^{\prime}}(t,{x}_{\theta}(t)), \ D^i(t)= \sigma^i_{\theta}(t,{x}_{\theta}(t))-\sigma^i_{\theta^{\prime}}(t,{x}_{\theta}(t)).
\end{align*}
Thus, the process $\alpha$  satisfies the following SDE:
\begin{align*}
\alpha(t)=\int^t_0\left(A(s)\alpha(s)+C(s)\right)ds+\sum\limits_{i=1}^d\int^t_0\left(B^i(s)\alpha(s)+D^i(s)\right)dW^i(s).
\end{align*}
Applying Lemma \ref{myq1}   yields that
\begin{align*}
\mathbb{E}\left[\sup_{0\leq t\leq T}|\alpha(t)|^4\right]\leq C(L,T)\mathbb{E}\left[\int^T_0\left(|C(s)|^4+\sum\limits_{i=1}^d|D^i(s)|^4\right)ds  \right].
\end{align*}
From assumption (H4), we have that for each $N>0$,
\[
|\ell(t)|\leq \overline{\omega}_N(\mu(\theta,\theta^{\prime}))+C(L)(1+|x_{\theta}(t)|+|u(t)|)(I_{\{|x_{\theta}(t)|\geq N\}}+I_{\{|u(t)|\geq N\}}),
\]
where $\ell=C,D^i$.
Then, by a similar analysis as Lemma \ref{myw201}, we could  get that
\begin{align*}
\mathbb{E}\left[\sup_{0\leq t\leq T}|x_{\theta}(t)-x_{\theta^{\prime}}(t)|^4\right]\leq C(L,T,x_0,u,p)\left( |\overline{\omega}_N(\mu(\theta,\theta^{\prime}))|^4+N^{\frac{4-p}{p}}\right), \ \forall N>0,
\end{align*}
which implies that
\begin{align}\label{myw607}
\lim\limits_{\epsilon\rightarrow 0}\sup\limits_{\mu(\theta,\theta^{\prime})\leq \epsilon}\mathbb{E}\left[\sup_{0\leq t\leq T}|x_{\theta}(t)-x_{\theta^{\prime}}(t)|^4\right]=0.
\end{align}

{\bf Step 2 ($y$-estimate).}  Denote $J^1=\varphi_{\theta^{\prime}}(x_{\theta}(T))
-\varphi_{\theta^{\prime}}(x_{\theta^{\prime}}(T))$,
$J^2=\varphi_{\theta}(x_{\theta}(T))
-\varphi_{\theta^{\prime}}(x_{\theta}(T))$   and $\gamma=(x,y,z)$. Set
\begin{align*}
&E(t)=\int^1_0 \partial_x f_{\theta^{\prime}}(t,\gamma_{\theta^{\prime}}(t)+\lambda(\gamma_{\theta}(t)-\gamma_{\theta^{\prime}}(t)))d\lambda,
\ F(t)=\int^1_0 \partial_y f_{\theta^{\prime}}(t,\gamma_{\theta^{\prime}}(t)+\lambda(\gamma_{\theta}(t)-\gamma_{\theta^{\prime}}(t)))d\lambda,\\
 &G(t)= \int^1_0 \partial_z f_{\theta^{\prime}}(t,\gamma_{\theta^{\prime}}(t)+\lambda(\gamma_{\theta}(t)-\gamma_{\theta^{\prime}}(t)))d\lambda,\  H(t)=f_{\theta}(t,\gamma_{\theta}(t))-f_{\theta^{\prime}}(t,\gamma_{\theta}(t)).
\end{align*}
Then, it follows from Lemma \ref{myq2}   that
\begin{align*}
\mathbb{E}\left[\sup_{0\leq t\leq T}|\beta(t)|^2+\int^T_0|\zeta(t)|^2dt\right]\leq C(L,T)\mathbb{E}\left[|J^1|^2+|J^2|^2+\left|\int^T_0|E(t)\alpha(t)+H(t)|dt\right|^2\right],
\end{align*}
According to assumption (H1) and  equation  \eqref{myw607}, we get that
\[
\lim\limits_{\epsilon\rightarrow 0}\mathbb{E}\left[|J^1|^2+\left|\int^T_0|E(t)\alpha(t)|dt\right|^2\right]
\leq C(L,T,x_0,u)\lim\limits_{\epsilon\rightarrow 0}\sup\limits_{\mu(\theta,\theta^{\prime})\leq \epsilon}\mathbb{E}\left[\sup_{0\leq t\leq T}|x_{\theta}(t)-x_{\theta^{\prime}}(t)|^4\right]^{\frac{1}{2}}=0.
\]
On the other hand, by a similar analysis as in step 1, we have that
\[
\lim\limits_{\epsilon\rightarrow 0}\mathbb{E}\left[|J^2|^2+\left|\int^T_0|H(t)|dt\right|^2\right]=0,
\]
which completes the proof.
\end{proof}

\begin{lemma} \label{myw2091} Suppose that \emph{(H1)}-\emph{(H4)}  hold. Then,
\[
\lim\limits_{\epsilon\rightarrow 0}\sup\limits_{\mu(\theta,\theta^{\prime})\leq \epsilon}\mathbb{E}\left[\sup_{0\leq t\leq T}|\widehat{x}_{\theta}(t)-\widehat{x}_{\theta^{\prime}}(t)|^4+\sup_{0\leq t\leq T}|\widehat{y}_{\theta}(t)-\widehat{y}_{\theta^{\prime}}(t)|^2+\int^T_0|\widehat{z}_{\theta}(t)-\widehat{z}_{\theta^{\prime}}(t)|^2dt\right]=0.
\]
\end{lemma}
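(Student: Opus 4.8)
The plan is to follow the template established in Lemma \ref{myw201}, Lemma \ref{myw202} and Lemma \ref{myw209}: fix $\theta,\theta^{\prime}\in\Theta$, set $\widehat{\alpha}:=\widehat{x}_{\theta}-\widehat{x}_{\theta^{\prime}}$, $\widehat{\beta}:=\widehat{y}_{\theta}-\widehat{y}_{\theta^{\prime}}$, $\widehat{\zeta}:=\widehat{z}_{\theta}-\widehat{z}_{\theta^{\prime}}$, and observe that each difference solves a \emph{linear} SDE (resp.\ BSDE) whose homogeneous coefficients are the $\theta$-coefficients of \eqref{myw101} (resp.\ \eqref{myw106}), uniformly bounded by (H1), and whose inhomogeneous source collects the discrepancies between the $\theta$- and $\theta^{\prime}$-coefficients. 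I would then invoke the a priori estimates of Lemma \ref{myq1} and Lemma \ref{myq2} to reduce everything to showing that the relevant norms of these source terms tend to $0$ uniformly over $\{\mu(\theta,\theta^{\prime})\leq\epsilon\}$ as $\epsilon\to 0$.

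In Step 1 (the $\widehat{x}$-estimate) I would write $\widehat{\alpha}$ as the solution of
\[
d\widehat{\alpha}(t)=\big(\partial_x b_{\theta}(t)\widehat{\alpha}(t)+\widehat{C}(t)\big)dt+\sum_{i=1}^{d}\big(\partial_x\sigma^i_{\theta}(t)\widehat{\alpha}(t)+\widehat{D}^i(t)\big)dW^i(t),\quad \widehat{\alpha}(0)=0,
\]
with $\widehat{C}(t)=(\partial_x b_{\theta}(t)-\partial_x b_{\theta^{\prime}}(t))\widehat{x}_{\theta^{\prime}}(t)+(\partial_u b_{\theta}(t)-\partial_u b_{\theta^{\prime}}(t))(u(t)-\overline{u}(t))$ and $\widehat{D}^i$ defined analogously. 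The crucial point, where this lemma genuinely differs from Lemma \ref{myw209}, is estimating a coefficient difference such as $\partial_x b_{\theta}(t,\overline{x}_{\theta}(t),\overline{u}(t))-\partial_x b_{\theta^{\prime}}(t,\overline{x}_{\theta^{\prime}}(t),\overline{u}(t))$, since now the evaluation points themselves move with $\theta$. I would split it as $[\partial_x b_{\theta}-\partial_x b_{\theta^{\prime}}](t,\overline{x}_{\theta}(t),\overline{u}(t))+[\partial_x b_{\theta^{\prime}}(t,\overline{x}_{\theta}(t),\overline{u}(t))-\partial_x b_{\theta^{\prime}}(t,\overline{x}_{\theta^{\prime}}(t),\overline{u}(t))]$, bounding the first bracket by $\overline{\omega}_N(\mu(\theta,\theta^{\prime}))$ on $\{|\overline{x}_{\theta}|,|\overline{u}|\leq N\}$ via (H4) and the second by $\overline{\omega}(|\overline{x}_{\theta}(t)-\overline{x}_{\theta^{\prime}}(t)|)$ via (H3), while on the truncation complement both are dominated by the uniform bound $2C(L)$. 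The (H3)-part is then killed by Lemma \ref{myw209}, the (H4)-part by sending $\mu(\theta,\theta^{\prime})\to0$ for fixed $N$, and the truncation complement, together with the linear-growth factors $\widehat{x}_{\theta^{\prime}}$ and $u-\overline{u}$, is controlled by Hölder's inequality against the uniform moment bound \eqref{myw102} and the $p$-integrability of the controls, producing an $N^{(4-p)/p}$ remainder. Sending $\epsilon\to0$ and then $N\to\infty$ yields the uniform vanishing of $\mathbb{E}[\sup_t|\widehat{\alpha}(t)|^4]$.

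In Step 2 (the $(\widehat{y},\widehat{z})$-estimate) I would cast $(\widehat{\beta},\widehat{\zeta})$ as the solution of a linear BSDE with bounded generator coefficients $\partial_y f_{\theta}$, $\partial_z f_{\theta}$, terminal value $\partial_x\varphi_{\theta}(\overline{x}_{\theta}(T))\widehat{x}_{\theta}(T)-\partial_x\varphi_{\theta^{\prime}}(\overline{x}_{\theta^{\prime}}(T))\widehat{x}_{\theta^{\prime}}(T)$, and source $\widehat{H}$ gathering $\partial_x f_{\theta}\widehat{\alpha}$ together with the coefficient differences $(\partial_v f_{\theta}-\partial_v f_{\theta^{\prime}})$ multiplied by $\widehat{x}_{\theta^{\prime}},\widehat{y}_{\theta^{\prime}},(\widehat{z}_{\theta^{\prime}})^{\top},(u-\overline{u})$ for $v=x,y,z,u$. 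Applying Lemma \ref{myq2}, it suffices to show the terminal term and $\mathbb{E}[(\int_0^T|\widehat{H}|ds)^2]$ vanish uniformly. The terminal term and the $\partial_x f_{\theta}\widehat{\alpha}$-term are handled by Step 1 combined with the growth bounds $|\partial_x\varphi_{\theta}|,|\partial_x f_{\theta}|\leq C(L)(1+|x|+|u|)$ and Hölder, exactly as in Step 1 of Lemma \ref{myw202}; each coefficient-difference term is then treated by the same (H3)+(H4)+truncation decomposition as above, now using \eqref{myw107} and Lemma \ref{myw209} for the uniform moments of $\widehat{x}_{\theta^{\prime}},\widehat{y}_{\theta^{\prime}},\widehat{z}_{\theta^{\prime}}$ and of $\overline{x}_{\theta}-\overline{x}_{\theta^{\prime}},\overline{y}_{\theta}-\overline{y}_{\theta^{\prime}},\overline{z}_{\theta}-\overline{z}_{\theta^{\prime}}$.

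The main obstacle will be the coefficient-difference bookkeeping: because the variational equations carry derivatives evaluated along the $\theta$-dependent optimal trajectory, every such difference requires \emph{simultaneously} the $\theta$-regularity (H4), the state-regularity (H3), and the already-established uniform continuity of $(\overline{x}_{\theta},\overline{y}_{\theta},\overline{z}_{\theta})$ in $\theta$ from Lemma \ref{myw209}. Since the moments are only finite up to order $p$, the truncation-plus-Hölder device must be run uniformly in $(\theta,\theta^{\prime})$, which is precisely why the uniform estimates \eqref{myw102} and \eqref{myw107} were recorded earlier.
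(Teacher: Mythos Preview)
Your proposal is correct and follows essentially the same route as the paper: write $\widehat{\alpha}$ (resp.\ $(\widehat{\beta},\widehat{\zeta})$) as the solution of a linear SDE (resp.\ BSDE) with bounded homogeneous coefficients, apply Lemma \ref{myq1} (resp.\ Lemma \ref{myq2}), and control the source terms by splitting each derivative difference into an (H4)-part at the common evaluation point $\overline{x}_{\theta}$ plus an (H3)-part along $\overline{x}_{\theta}-\overline{x}_{\theta^{\prime}}$, the latter being handled via Lemma \ref{myw209}. The only cosmetic difference is that the paper places $\partial_x b_{\theta^{\prime}}$ in the linear part (so $\widehat{x}_{\theta}$ appears in the source) rather than $\partial_x b_{\theta}$ (with $\widehat{x}_{\theta^{\prime}}$ in the source) as you do, and the paper spells out the (H3)-part via an explicit $\varepsilon$--$\delta$ truncation on $\{|\overline{x}_{\theta}-\overline{x}_{\theta^{\prime}}|\geq\delta\}$ rather than invoking $\overline{\omega}(\cdot)$ directly; both variants are equivalent.
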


\begin{proof}
For any $\theta,\theta^{\prime}\in\Theta$, set $(\widehat{\alpha},\widehat{\beta},\widehat{\zeta}):=(\widehat{x}_{\theta}-\widehat{x}_{\theta^{\prime}},\widehat{y}_{\theta}-\widehat{y}_{\theta^{\prime}},\widehat{z}_{\theta}-\widehat{z}_{\theta^{\prime}})$.
The proof is divided into the following two steps. For convenience, we omit the argument $\overline{u}$.

{\bf Step 1 ($\widehat{x}$-estimate).} Denote
\begin{align*}
 &\widehat{C}(t)=(\partial_xb_{\theta}(s)-\partial_xb_{\theta^{\prime}}(s))\widehat{x}_{\theta}(s)+(\partial_ub_{\theta}(s)-\partial_ub_{\theta^{\prime}}(s))(u(s)-\overline{u}(s)),\\
& \widehat{D}^i(t)=(\partial_x\sigma^i_{\theta}(s)-\partial_x\sigma^i_{\theta^{\prime}}(s))\widehat{x}_{\theta}(s)+(\partial_u\sigma^i_{\theta}(s)-\partial_u\sigma^i_{\theta^{\prime}}(s))(u(s)-\overline{u}(s)).
\end{align*}
Then, applying Lemma \ref{myq1}  in appendix  yields that
\begin{align*}
\mathbb{E}\left[\sup_{0\leq t\leq T}|\widehat{\alpha}(t)|^4\right]\leq C(L,T)\mathbb{E}\left[\int^T_0\left(|\widehat{C}(s)|^4+\sum\limits_{i=1}^d|\widehat{D}^i(s)|^4\right)ds  \right].
\end{align*}

By the definition, it holds that
\[
\partial_xb_{\theta}(s)-\partial_xb_{\theta^{\prime}}(s)=\partial_xb_{\theta}(s,\overline{x}_{\theta}(s))-
\partial_xb_{\theta^{\prime}}(s,\overline{x}_{\theta}(s))+\partial_xb_{\theta^{\prime}}(s,\overline{x}_{\theta}(s))-\partial_xb_{\theta^{\prime}}(s,\overline{x}_{\theta^{\prime}}(s)).
\]
By a similar analysis as Lemma \ref{myw209} above, we could  get that
\begin{align*}
\lim\limits_{\epsilon\rightarrow 0}\sup\limits_{\mu(\theta,\theta^{\prime})\leq \epsilon}\mathbb{E}\left[\int^T_0\left|\partial_xb_{\theta}(s,\overline{x}_{\theta}(s))-
\partial_xb_{\theta^{\prime}}(s,\overline{x}_{\theta}(s))\right|^4|\widehat{x}_{\theta}(s)|^4ds  \right]=0.
\end{align*}
By assumption (H3), for each $\varepsilon>0$, there exists a $\delta>0$ such that
\[
|l(s,x,u)-l(s,x^{\prime},u)|\leq \varepsilon \ \text{for $l=\partial_xb_{\theta^{\prime}},\partial_ub_{\theta^{\prime}},\partial_x\sigma_{\theta^{\prime}},\partial_u\sigma_{\theta^{\prime}}$},
\]
whenever $|x-x^{\prime}|\leq \delta$.
From the above  inequality, we get that
\begin{align*}
|\partial_xb_{\theta^{\prime}}(s,\overline{x}_{\theta}(s))-
\partial_xb_{\theta^{\prime}}(s)|\leq& \varepsilon+C(L)I_{\{|\overline{x}_{\theta}(s)-\overline{x}_{\theta^{\prime}}(s)|\geq \delta\}}.
\end{align*}
Thus, a direct computation yields that
\begin{align*}
&\mathbb{E}\left[\int^T_0\left|\partial_xb_{\theta^{\prime}}(s,\overline{x}_{\theta}(s))-
\partial_xb_{\theta^{\prime}}(s)\right|^4|\widehat{x}_{\theta}(s)|^4ds  \right]
\\
&\leq
C(L,T,x_0,\overline{u})\left(|\varepsilon|^4+\delta^{\frac{4-p}{p}}{\mathbb{E}\left[\sup\limits_{0\leq t\leq T}|\overline{x}_{\theta}(t)-\overline{x}_{\theta^{\prime}}(t)|\right]^{\frac{p-4}{p}}}\right),
\end{align*}
which together with  Lemma \ref{myw209} implies that
\begin{align*}
\lim\limits_{\epsilon\rightarrow 0}\sup\limits_{\mu(\theta,\theta^{\prime})\leq \epsilon}\mathbb{E}\left[\int^T_0\left|\partial_xb_{\theta^{\prime}}(s,\overline{x}_{\theta}(s))-
\partial_xb_{\theta^{\prime}}(s)\right|^4|\widehat{x}_{\theta}(s)|^4ds  \right]
\leq
C(L,T,x_0,\overline{u})|\varepsilon|^4.
\end{align*}
Sending $\varepsilon\rightarrow 0$, we obtain that the left side  is equal to $0$.
As a result, we deduce that
\begin{align*}
\lim\limits_{\epsilon\rightarrow 0}\sup\limits_{\mu(\theta,\theta^{\prime})\leq \epsilon}\mathbb{E}\left[\int^T_0\left|\partial_xb_{\theta}(s)-\partial_xb_{\theta^{\prime}}(s)\right|^4|\widehat{x}_{\theta}(s)|^4ds  \right]=0.
\end{align*}
Using the same method, we could  derive that
\[
\lim\limits_{\epsilon\rightarrow 0}\sup\limits_{\mu(\theta,\theta^{\prime})\leq \epsilon}\mathbb{E}\left[\sup_{0\leq t\leq T}|\widehat{x}_{\theta}(t)-\widehat{x}_{\theta^{\prime}}(t)|^4\right]=0.
\]
{\bf Step 2 ($\widehat{y}$-estimate).}
 Set $\gamma_{\theta}=(x_{\theta},y_{\theta},z_{\theta})$ and
\begin{align*}
& \widehat{J}^1=\partial_x\varphi_{\theta^{\prime}}(\overline{x}_{\theta^{\prime}}(T))\widehat{\alpha}(T),\ \widehat{J}^2=(\partial_x\varphi_{\theta}(\overline{x}_{\theta}(T))-\partial_x\varphi_{\theta^{\prime}}(\overline{x}_{\theta^{\prime}}(T)))\widehat{x}_{\theta}(T),\
\widehat{E}(t)=\partial_x f_{\theta^{\prime}}(t,\overline{\gamma}_{\theta^{\prime}}(t))
\\
& \widehat{F}(t)=\partial_y f_{\theta^{\prime}}(t,\overline{\gamma}_{\theta^{\prime}}(t)),\ \widehat{G}(t)=\partial_z f_{\theta^{\prime}}(t,\overline{\gamma}_{\theta^{\prime}}(t)),\widehat{H}(t)=[\partial_u f_{\theta}(t,\overline{\gamma}_{\theta}(t))-\partial_u f_{\theta^{\prime}}(t,\overline{\gamma}_{\theta^{\prime}}(t))](u(t)-\overline{u}(t)) \\& +[\partial_x f_{\theta}(t,\overline{\gamma}_{\theta}(t))-\widehat{E}(t)]\widehat{x}_{\theta}(t)
 +[\partial_y f_{\theta}(t,\overline{\gamma}_{\theta}(t))-\widehat{F}(t)]\widehat{y}_{\theta}(t)  +[\partial_z f_{\theta}(t,\overline{\gamma}_{\theta}(t))-\widehat{G}(t)](\widehat{z}_{\theta}(t))^{\top}.
\end{align*}
Thus, applying Lemma \ref{myq2}  in appendix  A yields that
\begin{align*}
\mathbb{E}\left[\sup_{0\leq t\leq T}|\widehat{\beta}(t)|^2+\int^T_0|\widehat{\zeta}(t)|^2dt\right]\leq C(L,T)\mathbb{E}\left[\left|\widehat{J}^1\right|^2+\left|\widehat{J}^2\right|^2+\left|\int^T_0\left(|\widehat{E}(t)\widehat{\alpha}(t)|+\left|\widehat{H}(t)\right|\right)dt\right|^2\right].
\end{align*}
According to assumption (H1), we have that
\[
\lim\limits_{\epsilon\rightarrow 0}\sup\limits_{\mu(\theta,\theta^{\prime})\leq \epsilon}\mathbb{E}\left[\left|\widehat{J}^1\right|^2+\left|\int^T_0|\widehat{E}(t)\widehat{\alpha}(t)|dt\right|^2\right]\leq C(L,T,x_0,\overline{u})\lim\limits_{\epsilon\rightarrow 0}\sup\limits_{\mu(\theta,\theta^{\prime})\leq \epsilon}\mathbb{E}\left[\sup_{0\leq t\leq T}|\widehat{\alpha}(t)|^4\right]^{\frac{1}{2}}=0.
\]
On the other hand, by a similar analysis as in step 1, we could get that
\[
\lim\limits_{\epsilon\rightarrow 0}\sup\limits_{\mu(\theta,\theta^{\prime})\leq \epsilon}\mathbb{E}\left[\left|\widehat{J}^2\right|^2+\left|\int^T_0\left|\widehat{H}(t)\right|dt\right|^2\right]=0,
\]
which ends the proof.
\end{proof}

\begin{lemma} \label{myw2092} Suppose that \emph{(H1)}-\emph{(H4)}  hold. Then,
\[
\lim\limits_{\epsilon\rightarrow 0}\sup\limits_{\mu(\theta,\theta^{\prime})\leq \epsilon} \mathbb{E}\left[ \int^T_0\left|\Pi_{\theta}(t)
-\Pi_{\theta^{\prime}}(t)\right| dt
\right]=0,
\]
where the process $\Pi_{\theta}$ is given by equation \eqref{myw2093}.
\end{lemma}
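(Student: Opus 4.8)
The plan is to deduce the uniform continuity of $\theta\mapsto\Pi_{\theta}$ from the uniform (in $\theta$, with respect to the distance $\mu$) continuity of the three ingredients entering the definition \eqref{myw2093}: the optimal state triple $(\overline{x}_{\theta},\overline{y}_{\theta},\overline{z}_{\theta})$, the multiplier $m_{\theta}$ solving \eqref{myw909}, and the adjoint pair $(p^1_{\theta},q^1_{\theta})$ solving \eqref{myw509}. The continuity of the state triple is already supplied by Lemma \ref{myw209}. Writing out $\partial_uH_{\theta}$ from the definition of the Hamiltonian (noting that the frozen argument $u^{\prime}=\overline{u}$ makes $\partial_{z^i}f_{\theta}$ contribute no $u$-derivative), one finds
\[
\partial_uH_{\theta}(t)=\Big\langle p^1_{\theta}(t),\,\partial_ub_{\theta}(t)+\sum_{i=1}^d\partial_{z^i}f_{\theta}(t)\,\partial_u\sigma^i_{\theta}(t)\Big\rangle+\sum_{i=1}^d\big\langle q^{1,i}_{\theta}(t),\,\partial_u\sigma^i_{\theta}(t)\big\rangle+\partial_uf_{\theta}(t),
\]
so that $\Pi_{\theta}=m_{\theta}\,\partial_uH_{\theta}$ and the natural splitting is
\[
\Pi_{\theta}-\Pi_{\theta^{\prime}}=(m_{\theta}-m_{\theta^{\prime}})\,\partial_uH_{\theta}+m_{\theta^{\prime}}\,(\partial_uH_{\theta}-\partial_uH_{\theta^{\prime}}).
\]
It then suffices to control each summand in the norm $\mathbb{E}[\int_0^T|\cdot|\,dt]$ uniformly over $\mu(\theta,\theta^{\prime})\le\epsilon$.

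First I would establish the two missing stability statements. For the multiplier, $m_{\theta}-m_{\theta^{\prime}}$ solves the linear SDE obtained by subtracting two copies of \eqref{myw909}, whose inhomogeneity is driven by $(\partial_yf_{\theta}-\partial_yf_{\theta^{\prime}})m_{\theta^{\prime}}$ and $(\partial_zf_{\theta}-\partial_zf_{\theta^{\prime}})m_{\theta^{\prime}}$; since $\partial_yf_{\theta},\partial_zf_{\theta}$ are bounded by (H1), $m_{\theta}$ has moments of all orders, and Lemma \ref{myq1} together with the decomposition $\partial_yf_{\theta}-\partial_yf_{\theta^{\prime}}=[\partial_yf_{\theta}-\partial_yf_{\theta^{\prime}}](t,\overline{\gamma}_{\theta})+[\partial_yf_{\theta^{\prime}}(t,\overline{\gamma}_{\theta})-\partial_yf_{\theta^{\prime}}(t,\overline{\gamma}_{\theta^{\prime}})]$ reduces the matter to (H4) on $\{|\overline{\gamma}_{\theta}|\le N\}$ plus a tail handled by the moment bounds of Lemma \ref{myw301} and Lemma \ref{myw209}, exactly as in Step 1 of Lemma \ref{myw209}. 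This yields $\lim_{\epsilon\to0}\sup_{\mu(\theta,\theta^{\prime})\le\epsilon}\mathbb{E}[\sup_t|m_{\theta}(t)-m_{\theta^{\prime}}(t)|^a]=0$ for every $a\ge1$. For the adjoint, the difference $(p^1_{\theta}-p^1_{\theta^{\prime}},q^1_{\theta}-q^1_{\theta^{\prime}})$ solves the linear BSDE obtained from \eqref{myw509}; its coefficients are bounded, while its free term $\partial_xf_{\theta}$ and terminal value $\partial_x\varphi_{\theta}$ grow linearly, and the entire $\theta$-dependence enters through coefficient differences. The same mechanism (Lemma \ref{myq2}, (H3), (H4), and the moment bounds \eqref{myw907} and Lemma \ref{myw301}), carried out as in Lemma \ref{myw2091}, gives $\lim_{\epsilon\to0}\sup_{\mu(\theta,\theta^{\prime})\le\epsilon}\mathbb{E}[\sup_t|p^1_{\theta}(t)-p^1_{\theta^{\prime}}(t)|^2+\int_0^T|q^1_{\theta}(t)-q^1_{\theta^{\prime}}(t)|^2dt]=0$.

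With these continuity results at hand, I would bound the two summands of the splitting. For $(m_{\theta}-m_{\theta^{\prime}})\partial_uH_{\theta}$, H\"{o}lder's inequality separates $m_{\theta}-m_{\theta^{\prime}}$, controlled in high $L^a$, from $\partial_uH_{\theta}$, whose $L^b$-norm is finite and uniformly bounded in $\theta$ because $\partial_ub_{\theta},\partial_u\sigma_{\theta},\partial_uf_{\theta}$ grow linearly in $(x,u)$, $\partial_zf_{\theta}$ is bounded, and $p^1_{\theta},q^1_{\theta}$ obey \eqref{myw907}. For $m_{\theta^{\prime}}(\partial_uH_{\theta}-\partial_uH_{\theta^{\prime}})$, I would expand $\partial_uH_{\theta}-\partial_uH_{\theta^{\prime}}$ term by term; each term is a product of a coefficient-type factor with one of $p^1,q^1$, and its difference splits into (i) the variation of the coefficient in $\theta$ at frozen state, governed by (H4) on compacts plus a tail, (ii) the variation of the coefficient through the state difference $(\overline{x}_{\theta}-\overline{x}_{\theta^{\prime}},\dots)$, governed by the local Lipschitz property (H3) and Lemma \ref{myw209}, and (iii) the variation of $p^1,q^1$ themselves, governed by the adjoint stability just established. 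H\"{o}lder's inequality again separates the factors, the integrability margin being exactly $p>4$.

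The hard part will be the uniform-in-$\theta$ integrability of these products. Indeed, $\partial_uH_{\theta}$ carries quadratic-type growth in the state (through $\partial_uf_{\theta}\sim(1+|x|+|u|)$ multiplied by $p^1_{\theta}$) and is further multiplied by $m_{\theta}$, so keeping every moment finite and uniform forces careful use of the margin $p>4$ and of Lemma \ref{myw301} and \eqref{myw907}. Moreover, as in the earlier lemmas, the (H4) modulus $\overline{\omega}_N$ yields only locally (in the state) uniform continuity in $\theta$, so each estimate appears in the form $\overline{\omega}_N(\mu(\theta,\theta^{\prime}))^{\#}+N^{\frac{4-p}{p}}$, and one must send $\epsilon\to0$ first and only then $N\to\infty$. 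Orchestrating this double limit simultaneously across the $m$-, $p^1$-, $q^1$- and state-differences is the delicate bookkeeping of the argument, but it introduces no idea beyond those already used in Lemma \ref{myw209} and Lemma \ref{myw2091}.
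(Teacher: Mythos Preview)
Your proposal is correct and follows essentially the same approach as the paper's proof, which is itself only a sketch referring back to Lemma~\ref{myw2091}. The paper first records the analogue of \eqref{myw901}, namely the uniform continuity in $\theta$ of $m_{\theta}$ in $\mathcal{S}^{q}$ for every $q>2$ and of $(p^{1}_{\theta},q^{1}_{\theta})$ in $\mathcal{S}^{4}\times\mathcal{H}^{4}$, then treats the products $m_{\theta}\ell_{\theta}p^{1}_{\theta}$, $m_{\theta}\partial_{u}f_{\theta}$ and $m_{\theta}(\partial_{u}\sigma^{i}_{\theta})^{\top}q^{1,i}_{\theta}$ one by one; your splitting $(m_{\theta}-m_{\theta^{\prime}})\partial_{u}H_{\theta}+m_{\theta^{\prime}}(\partial_{u}H_{\theta}-\partial_{u}H_{\theta^{\prime}})$ reorganizes the same estimates without changing their substance. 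One small point: for the $q^{1}$-piece the paper actually establishes convergence in $\mathcal{H}^{4}$ rather than $\mathcal{H}^{2}$, which makes the subsequent H\"{o}lder bookkeeping a bit cleaner, but your $\mathcal{H}^{2}$ control also suffices since $\partial_{u}\sigma_{\theta}$ is bounded and $m_{\theta^{\prime}}$ has moments of all orders.
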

\begin{proof} The main idea is from Lemma \ref{myw2091} and we only give the sketch of the proof. For convenience, we omit the argument $\overline{u}$.
Using the same method as  Lemma \ref{myw2091}, we  derive  that for each $q>2$ \begin{align}\label{myw901}
\lim\limits_{\epsilon\rightarrow 0}\sup\limits_{\mu(\theta,\theta^{\prime})\leq \epsilon}\mathbb{E}\left[\sup_{0\leq t\leq T}|{m}_{\theta}(t)-m_{\theta^{\prime}}(t)|^q+\sup_{0\leq t\leq T}|{p}^1_{\theta}(t)-p^1_{\theta^{\prime}}(t)|^4+\left|\int^T_0|{q}^1_{\theta}(t)-q^1_{\theta^{\prime}}(t)|^2dt\right|^2\right]=0.
\end{align}
Then, by a similar analysis as step 1 in the proof of Lemma \ref{myw2091}, we  could get that\begin{align*}
&\lim\limits_{\epsilon\rightarrow 0}\sup\limits_{\mu(\theta,\theta^{\prime})\leq \epsilon}\mathbb{E}\left[\int^T_0\left(\left|\ell_{\theta}(t)p^1_{\theta}
(t)-\ell_{\theta^{\prime}}(t)p^1_{\theta^{\prime}}(t)\right|^4+\left|\partial_uf_{\theta}(t,\overline{\gamma}_{\theta}(t))-\partial_uf_{\theta^{\prime}}(t,\overline{\gamma}_{\theta^{\prime}}(t))\right|^4\right)dt\right]=0
\end{align*}
where $\overline{\gamma}_{\theta}=(\overline{x}_{\theta},\overline{y}_{\theta},\overline{z}_{\theta})$ and $\ell_{\theta}(t)$ is  $(\partial_ub_{\theta}(t,\overline{x}_{\theta}(t)))^{\top}$, $\partial_{z^i}f_{\theta}(t,\overline{\gamma}_{\theta}(t))(\partial_u\sigma^i_{\theta}(t,\overline{x}_{\theta}(t)))^{\top}$, which together with equations \eqref{myw907} and \eqref{myw901}  implies that
\begin{align*}
&\lim\limits_{\epsilon\rightarrow 0}\sup\limits_{\mu(\theta,\theta^{\prime})\leq \epsilon}\mathbb{E}\bigg[\int^T_0\big(\left|m_{\theta}(t)\ell_{\theta}(t)p^1_{\theta}
(t)-m_{\theta^{\prime}}(t)\ell_{\theta^{\prime}}(t)p^1_{\theta^{\prime}}(t)\right|\\
&\ \ \ \ \ \ \ \ \ \ \ \ \ \ \ \ \ \ \ \ \ \ \ \ \ \ \ \ +\left|m_{\theta}(t)\partial_uf_{\theta}(t,\overline{\gamma}_{\theta}(t))-m_{\theta^{\prime}}(t)\partial_uf_{\theta^{\prime}}(t,\overline{\gamma}_{\theta^{\prime}}(t))\right|\big)dt\bigg]=0.
\end{align*}
On the other hand, a similar analysis yields that
\[
\lim\limits_{\epsilon\rightarrow 0}\sup\limits_{\mu(\theta,\theta^{\prime})\leq \epsilon}\mathbb{E}\left[\int^T_0\left|m_{\theta}(t)(\partial_u\sigma^i_{\theta}(t,\overline{x}_{\theta}(t)))^{\top}q^{1,i}_{\theta}
(t)-m_{\theta^{\prime}}(t)(\partial_u\sigma^i_{\theta^{\prime}}(t,\overline{x}_{\theta^{\prime}}(t)))^{\top}q^{1,i}_{\theta^{\prime}}
(t)\right|dt\right]=0.
\]
Putting the above two equations together, we get the desired result.
\end{proof}

\end{document}